\documentclass{article}
\usepackage[english]{babel}
\usepackage[letterpaper,top=2cm,bottom=2cm,left=3cm,right=3cm,marginparwidth=1.75cm]{geometry}
\usepackage{threeparttable}
\usepackage{multirow}
\usepackage{placeins}
\usepackage{amsmath}
\usepackage{amssymb}
\usepackage{graphicx}
\usepackage{array}
\usepackage[colorlinks=true, allcolors=blue]{hyperref}
\usepackage{amsthm}
\usepackage{float}
\usepackage{caption}
\usepackage{subcaption}
\usepackage{tikz}
\usetikzlibrary{calc}
\usepackage{algorithm}  
\usepackage{algorithmic} 
\usepackage{bm}     
\newtheorem{remark}{Remark}[section]

\newtheorem{lemma}{Lemma}[section]
\newtheorem{theorem}[lemma]{Theorem}

\numberwithin{equation}{section}
\allowdisplaybreaks[4]

\title{An unconditionally stable second-order Robin partitioned method for fluid--poroelastic structure interaction}
\author{Wenlong He\thanks{Faculty of Science and Technology, Beijing Normal-Hong Kong Baptist University, Zhuhai 519087, China.}\and Thomas Wick\thanks{Institute of Applied Mathematics, Leibniz University Hannover, Welfengarten 1, 30167 Hannover, Germany.}\and Xiaohe Yue\thanks{Corresponding author. School of Mathematical Sciences, East China Normal University, Shanghai 200241, China, Institute of Applied Mathematics, Leibniz University Hannover, Welfengarten 1, 30167 Hannover, Germany.}\and Jiwei Zhang\thanks{School of Mathematics and Statistics, and Hubei Key Laboratory of Computational Science, Wuhan University, Wuhan 430072, China.}\and Haibiao Zheng\thanks{School of Mathematical Sciences, Ministry of Education Key Laboratory of Mathematics and Engineering Applications, Shanghai Key Laboratory of PMMP, East China Normal University, Shanghai, 200241, China.}}
\date{}

\begin{document}
\maketitle

\begin{abstract}
    We study an unsteady fluid-poroelastic structure interaction (FPSI) problem, in which the free fluid region is governed by the incompressible Stokes equations and the poroelastic region by the fully dynamic Biot system. The original physical coupling conditions are equivalently reformulated as Robin-type interface conditions and an auxiliary interface variable is introduced as common Robin data shared by the two subsystems. This reformulation decomposes the coupled FPSI problem into fluid and poroelastic subproblems posed on their respective subdomains. The problem is discretized in space by the finite element method (FEM), while the backward Euler scheme and the BDF2 scheme combined with the second-order interface extrapolation are employed for the temporal discretization. 
    By exploiting the resulting discrete dynamic relations, we derive an update formula for the auxiliary interface variable and obtain fully discrete first- and second-order Robin partitioned schemes in which the two subproblems are solved independently at each time step. The unconditional stability of both schemes is rigorously established.
     Numerical experiments verify the temporal convergence orders of the proposed schemes and investigate the influence of the Robin parameter. 
    A classical pressure-wave benchmark verifies the accuracy of the proposed methods, while supplementary nonlinear moving-domain simulations illustrate their applicability to more complex FPSI configurations.
\end{abstract}
\begin{keywords}
FPSI; Robin partitioned schemes; finite element method; second-order; unconditional stability.
\end{keywords}

\noindent\textbf{MSC codes}: 65M60, 65M12, 76D05, 74F10.

\section{Introduction}

Fluid--poroelastic structure interaction (FPSI) problems arise in a wide range of scientific and engineering applications, including subsurface flow, geological carbon sequestration, hydraulic fracturing, and biological systems such as blood flow through 
deformable tissues~\cite{ambartsumyan2019nonlinear,barbati2016complex, bukac2015effects, ruiz2022biot}. Compared with conventional fluid--structure interaction (FSI) problems, FPSI systems involve additional coupling mechanisms associated with fluid transport through the porous medium, resulting in complex multiphysics systems with multiple spatial and temporal scales. Developing accurate, stable, and computationally efficient numerical methods for such systems therefore remains a challenging task in computational poromechanics.

Existing numerical approaches for FPSI problems are broadly classified as monolithic or partitioned. Monolithic formulations solve all fluid, poroelastic, and interface unknowns simultaneously and have been analyzed with respect to stability, well-posedness, and conservation \cite{ambartsumyan2018lagrange,badia2009coupling,wen2020strongly,wen2021discontinuous}; their cost is the solution of large coupled algebraic systems. Partitioned methods instead solve the two subproblems separately and exchange interface data. Robin- and Nitsche-type interface treatments have been developed for FSI and FPSI problems \cite{badia2008fluid,bukavc2015partitioning,seboldt2021numerical}, while the added-mass stability of partitioned FSI schemes has been extensively studied \cite{causin2005added,fernandez2013explicit,guidoboni2009stable}. Recent Robin--Robin schemes further demonstrate the potential of these methods for FPSI problems \cite{burman2022fully,parrow2025robin,parrow2026stability}.

However, most FPSI schemes use backward Euler time stepping because of its robustness \cite{guo2025fully,he2026locking}. Existing second-order partitioned methods retain one or more limitations: modification of the Biot model \cite{oyekole2020second}, interface subiterations \cite{parrow2025robin}, a time-step restriction depending on physical and trace constants \cite{guo2026stability}, or a CFL-type condition \cite{wang2026second}. Developing a second-order method that is noniterative, unconditionally stable, and consistent with the original Biot formulation therefore remains challenging.

In this work, we develop unconditionally stable first- and second-order Robin partitioned methods for the Stokes--Biot model. An equivalent Robin reformulation and an auxiliary interface variable decompose the coupled problem into fluid and poroelastic subproblems. Backward Euler and BDF2 time discretizations, combined with an update formula derived from the discrete interface relations, yield noniterative first- and second-order schemes. Discrete energy estimates establish their unconditional stability.

The main contributions of this work are summarized as follows:
\begin{itemize}
\item A Robin-based partitioned formulation is developed for the Stokes--Biot fluid--poroelastic interaction problem by introducing an auxiliary interface variable that serves as common Robin data for the two subproblems.
\item Fully discrete first- and second-order Robin partitioned schemes are constructed. The second-order method combines BDF2 time discretization with second-order extrapolation of the transmitted interface quantities and does not require interface subiterations. Rigorous unconditional stability estimates are established for both schemes through discrete energy arguments.
\item Numerical experiments verify the temporal convergence behavior and compare the proposed schemes with monolithic discretizations for a pressure-wave benchmark. Supplementary nonlinear moving-domain computations illustrate applicability beyond the linear fixed-domain setting.
\end{itemize}

The remainder of the paper is organized as follows. Section 2 presents the model and coupling conditions; Section 3 derives the Robin partitioned formulation; Section 4 develops and analyzes the fully discrete schemes; and Section 5 reports the numerical experiments. Section 6 concludes the paper.

\section{The fully coupled Stokes--Biot model}
\label{sec:fully_coupled_model}

Let $\Omega_f,\Omega_p\subset\mathbb{R}^{d}$, $d=2,3$, be the bounded
Lipschitz domains occupied by an incompressible viscous fluid and a
poroelastic medium, respectively. We assume that
$\Omega_f\cap\Omega_p=\emptyset$ and define the fluid--poroelastic
interface by
$
\Gamma=\partial\Omega_f\cap\partial\Omega_p.
$
The remaining parts of the fluid and poroelastic boundaries are denoted
by
$
\Gamma_f=\partial\Omega_f\setminus\overline{\Gamma},
~
\Gamma_p=\partial\Omega_p\setminus\overline{\Gamma},
$
respectively. When needed, these exterior boundaries are further
decomposed according to the prescribed boundary conditions.

The outward unit normal vectors to $\Omega_f$ and $\Omega_p$ are denoted
by $\mathbf{n}_f$ and $\mathbf{n}_p$, respectively. Hence,
$
\mathbf{n}_p=-\mathbf{n}_f~\text{on }\Gamma.
$
Moreover, $\{\boldsymbol{\tau}_j\}_{j=1}^{d-1}$ denotes an orthonormal
basis of the tangent space to $\Gamma$. A schematic representation of
the computational configuration and the interface orientation is shown
in Figure~\ref{fig:FPSI_domain}.

\begin{figure}[t]
\centering
\includegraphics[width=0.4\textwidth,height=0.3\textwidth]{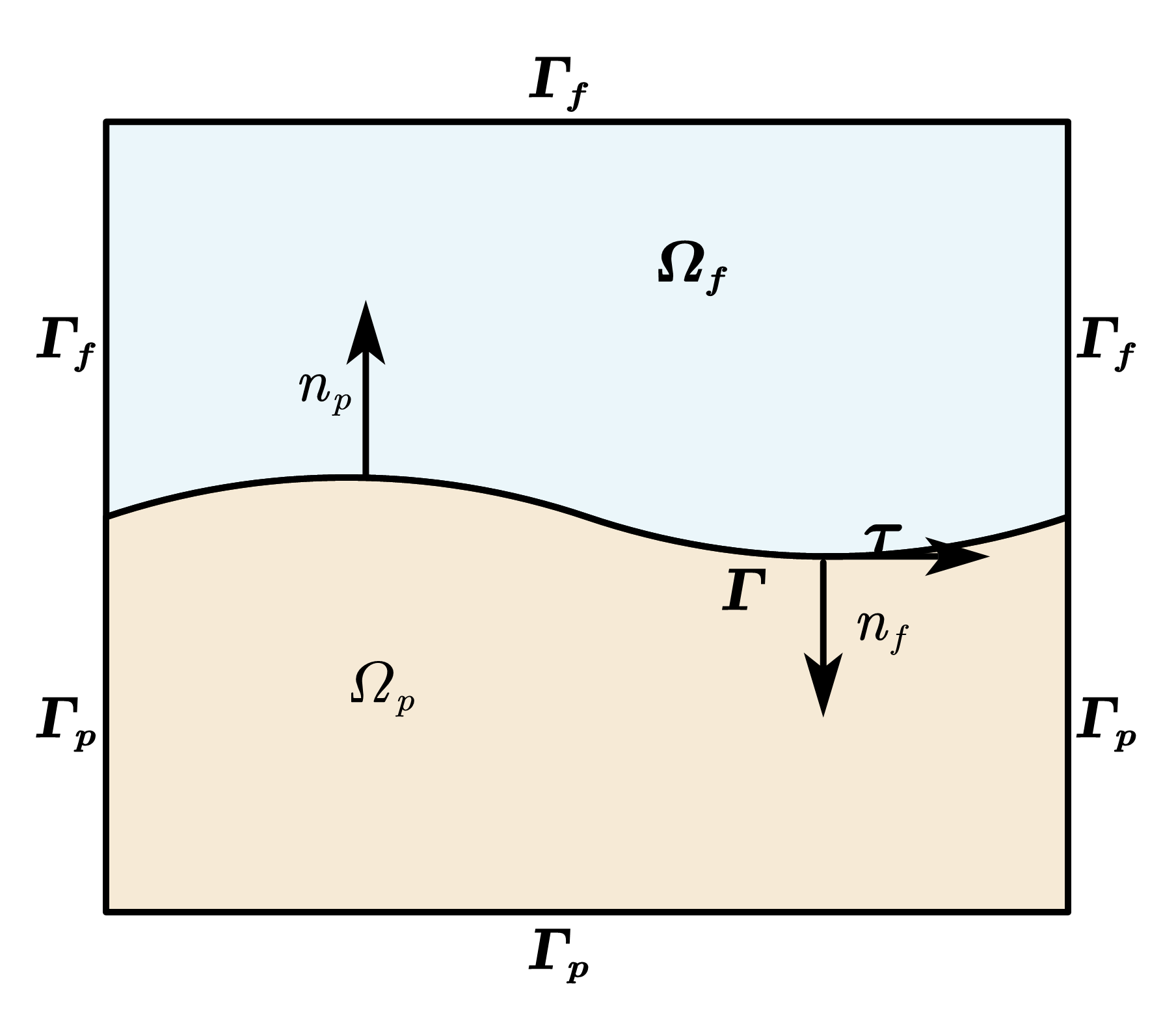}
\caption{Schematic representation of the fluid domain $\Omega_f$, the
poroelastic domain $\Omega_p$, and their common interface $\Gamma$.
The unit normals satisfy $\mathbf{n}_p=-\mathbf{n}_f$ on $\Gamma$, and
$\boldsymbol{\tau}$ denotes a unit tangent vector to the interface.}
\label{fig:FPSI_domain}
\end{figure}

\subsection{Governing equations}
\label{subsec:governing_equations}
The fluid velocity and pressure are denoted by
$\mathbf{u}_f$ and $p_f$, respectively. The fluid motion is governed by the time-dependent Stokes equations
\begin{align}
\rho_f\partial_t\mathbf{u}_f-\nabla\cdot\boldsymbol{\sigma}_f(\mathbf{u}_f,p_f)&=\mathbf{f}_f &&\text{in }\Omega_f\times(0,T], \label{eq:stokes_momentum}\\
\nabla\cdot\mathbf{u}_f&=\phi_f &&\text{in }\Omega_f\times(0,T], \label{eq:stokes_mass}
\end{align}
where $\rho_f>0$ and $\mu_f>0$ are the fluid density and dynamic viscosity, respectively, $\mathbf{f}_f$ is a prescribed body force, and $\phi_f$ is a prescribed source term. The fluid Cauchy stress tensor is defined by
\begin{equation*}
\boldsymbol{\sigma}_f(\mathbf{u}_f,p_f)=2\mu_f\boldsymbol{\varepsilon}(\mathbf{u}_f)-p_f\mathbf{I},
\qquad
\boldsymbol{\varepsilon}(\mathbf{u}_f)=\frac{1}{2}\left(\nabla\mathbf{u}_f+(\nabla\mathbf{u}_f)^T\right).
\label{eq:fluid_stress}
\end{equation*}
For an incompressible fluid without volumetric sources, one has $\phi_f=0$.

In the poroelastic domain $\Omega_p$, the displacement, structure velocity,
Darcy velocity, and pore pressure are denoted by
$\boldsymbol{\eta}_p$, $\boldsymbol{\xi}_p$, $\mathbf{u}_p$, and
$p_p$, respectively. The poroelastic medium is described by the fully dynamic Biot system~\cite{biot1941general, cesmelioglu2017analysis}
\begin{align}
\rho_p\partial_t\boldsymbol{\xi}_p-\nabla\cdot\boldsymbol{\sigma}_p(\boldsymbol{\eta}_p,p_p)&=\mathbf{f}_p &&\text{in }\Omega_p\times(0,T], \label{eq:biot_momentum}\\
\partial_t\boldsymbol{\eta}_p-\boldsymbol{\xi}_p&=\mathbf{0} &&\text{in }\Omega_p\times(0,T], \label{eq:biot_kinematic}\\
\mu_f\mathbf{K}^{-1}\mathbf{u}_p+\nabla p_p&=\mathbf{f}_{d} &&\text{in }\Omega_p\times(0,T], \label{eq:darcy_law}\\
c_0\partial_t p_p+\alpha\nabla\cdot\boldsymbol{\xi}_p+\nabla\cdot\mathbf{u}_p&=\phi_d &&\text{in }\Omega_p\times(0,T]. \label{eq:biot_mass}
\end{align}
Here, $\rho_p>0$ is the density of the poroelastic medium, $c_0\geq0$ is the constrained specific storage coefficient, $\alpha\in(0,1]$ is the Biot--Willis coefficient, $\mathbf{f}_p$ is the body force acting on the poroelastic skeleton, $\mathbf{f}_{d}$ represents the body force acting on the fluid, and $\phi_d$ is a prescribed volumetric source term in the fluid mass
balance equation. The permeability tensor $\mathbf{K}$ is assumed to be symmetric and uniformly positive definite: there exist constants $0<k_{\min}\leq k_{\max}$ such that
\begin{equation*}
k_{\min}|\mathbf{z}|^2\leq \mathbf{z}\cdot\mathbf{K}(\mathbf{x})\mathbf{z}\leq k_{\max}|\mathbf{z}|^2
\qquad\text{for all }\mathbf{z}\in\mathbb{R}^d\text{ and a.e. }\mathbf{x}\in\Omega_p.
\end{equation*}
The total poroelastic stress tensor is given by
\begin{equation*}
\boldsymbol{\sigma}_p(\boldsymbol{\eta}_p,p_p)=\boldsymbol{\sigma}_e(\boldsymbol{\eta}_p)-\alpha p_p\mathbf{I},
\label{eq:poroelastic_total_stress}
\end{equation*}
where the elastic stress tensor is defined by
$
\boldsymbol{\sigma}_e(\boldsymbol{\eta}_p)=2\mu_p\boldsymbol{\varepsilon}(\boldsymbol{\eta}_p)+\lambda_p(\nabla\cdot\boldsymbol{\eta}_p)\mathbf{I}.
$
The Lam\'e coefficients $\lambda_p$ and $\mu_p$ are related to the Young modulus $E$ and the Poisson ratio $\nu$ through
\begin{equation*}
\lambda_p=\frac{E\nu}{(1+\nu)(1-2\nu)},
\qquad
\mu_p=\frac{E}{2(1+\nu)}.
\label{eq:lame_coefficients}
\end{equation*}

\subsection{Interface and boundary conditions}
\label{subsec:interface_boundary_conditions}

The Stokes and Biot systems are coupled through conservation of mass, balance of forces, continuity of the normal fluid stress, and the Beavers--Joseph--Saffman condition~\cite{badia2009coupling,  carraro2013pressure, showalter2005poroelastic}. Conservation of mass across the interface is expressed as
\begin{equation}
\mathbf{u}_f\cdot\mathbf{n}_f+(\boldsymbol{\xi}_p+\mathbf{u}_p)\cdot\mathbf{n}_p=0
\qquad\text{on }\Gamma\times(0,T].
\label{eq:interface_mass}
\end{equation}
The balance of forces is given by
\begin{equation}
\boldsymbol{\sigma}_f(\mathbf{u}_f,p_f)\mathbf{n}_f+\boldsymbol{\sigma}_p(\boldsymbol{\eta}_p,p_p)\mathbf{n}_p=\mathbf{0}
\qquad\text{on }\Gamma\times(0,T].
\label{eq:interface_traction}
\end{equation}
For the normal components, the fluid traction and the pore pressure satisfy
\begin{equation}
-\boldsymbol{\sigma}_f(\mathbf{u}_f,p_f)\mathbf{n}_f\cdot\mathbf{n}_f=p_p
\qquad\text{on }\Gamma\times(0,T].
\label{eq:interface_normal_stress}
\end{equation}
Finally, the tangential slip between the free fluid and the poroelastic skeleton is described by
\begin{equation}
-\boldsymbol{\sigma}_f(\mathbf{u}_f,p_f)
\mathbf{n}_f\cdot\boldsymbol{\tau}_j
=
\gamma_{\mathrm{BJS},j}
(\mathbf{u}_f-\boldsymbol{\xi}_p)
\cdot\boldsymbol{\tau}_j
\qquad
\text{on }\Gamma\times(0,T],
\quad j=1,\ldots,d-1, \label{tangential slip}
\end{equation} 
where $\gamma_{\mathrm{BJS},j}>0$ is the Beavers--Joseph--Saffman friction coefficient and $\boldsymbol{\tau}_j$ denotes an orthonormal set of unit tangent vectors on $\Gamma$. A commonly used expression is
\begin{equation*}
\gamma_{\mathrm{BJS},j}=\frac{\alpha_{\mathrm{BJS}}\mu_f}{\sqrt{\boldsymbol{\tau}_j\cdot\mathbf{K}\boldsymbol{\tau}_j}},
\qquad j=1,\ldots,d-1,
\label{eq:bjs_coefficient}
\end{equation*}
where $d\in\{2,3\}$ denotes the spatial dimension, $j=1,\ldots,d-1$
indexes the tangential directions, and $\alpha_{\mathrm{BJS}}>0$ is a
dimensionless, experimentally determined parameter.

The Stokes--Biot system \eqref{eq:stokes_momentum}--\eqref{tangential slip} must be supplemented with appropriate boundary and initial conditions. To this end, we decompose the exterior boundaries as
\begin{equation*}
\partial\Omega_f\setminus\Gamma=\Gamma_f^D\cup\Gamma_f^N = \Gamma_f,
\qquad
\partial\Omega_p\setminus\Gamma=\Gamma_p^D\cup\Gamma_p^N
=\widetilde{\Gamma}_p^D\cup\widetilde{\Gamma}_p^N = \Gamma_p,
\label{eq:boundary_decomposition}
\end{equation*}
where $\Gamma_p^D\cup\Gamma_p^N$ is associated with the poroelastic skeleton and $\widetilde{\Gamma}_p^D\cup\widetilde{\Gamma}_p^N$ is associated with the Darcy flow. Accordingly, we impose the following boundary conditions for the Stokes--Biot system:
\begin{align*}
\mathbf{u}_f
&=\mathbf{g}_f
\quad\text{on }\Gamma_f^D\times(0,T],
&
\boldsymbol{\sigma}_f(\mathbf{u}_f,p_f)\mathbf{n}_f
&=\mathbf{t}_f
\quad\text{on }\Gamma_f^N\times(0,T],
\\
\boldsymbol{\eta}_p
&=\mathbf{g}_p
\quad\text{on }\Gamma_p^D\times(0,T],
&
\boldsymbol{\sigma}_p(\boldsymbol{\eta}_p,p_p)\mathbf{n}_p
&=\mathbf{t}_p
\quad\text{on }\Gamma_p^N\times(0,T],
\\
p_p
&=p_D
\quad\text{on }\widetilde{\Gamma}_p^D\times(0,T],
&
\mathbf{u}_p\cdot\mathbf{n}_p
&=q_N
\quad\text{on }\widetilde{\Gamma}_p^N\times(0,T].
\end{align*}
The system is further equipped with the initial conditions
\begin{equation*}
\mathbf{u}_f(\cdot,0)=\mathbf{u}_f^0,
\qquad
\boldsymbol{\eta}_p(\cdot,0)=\boldsymbol{\eta}_p^0,
\qquad
\boldsymbol{\xi}_p(\cdot,0)=\boldsymbol{\xi}_p^0,
\qquad
p_p(\cdot,0)=p_p^0.
\label{eq:initial_conditions}
\end{equation*}

\subsection{Variational formulation}
\label{subsec:variational_formulation}

For the sake of notation, we assume homogeneous boundary data in the variational formulation; nonhomogeneous data can be incorporated through standard lifting arguments. We introduce the spaces
\begin{align*}
\mathbf{V}_f&=\left\{\mathbf{v}_f\in\mathbf{H}^1(\Omega_f):\mathbf{v}_f=\mathbf{0}\text{ on }\Gamma_f^D\right\}, \qquad
Q_f=L_0^2(\Omega_f), \\
\mathbf{V}_p&=\{\boldsymbol{\omega}_p\in\mathbf{H}^1(\Omega_p):\boldsymbol{\omega}_p=\mathbf{0}\text{ on }\Gamma_p^D\}, \\
\mathbf{X}_p&=\left\{\mathbf{v}_p\in\mathbf{H}(\operatorname{div};\Omega_p):(\mathbf{v}_p\cdot\mathbf{n}_p)|_{\Gamma}\in L^2(\Gamma),\ \mathbf{v}_p\cdot\mathbf{n}_p=0\text{ on }\widetilde{\Gamma}_p^N\right\}, \\
Q_p&=\{q_p\in H^1(\Omega_p):q_p=0\text{ on }\widetilde{\Gamma}_p^D\}.
\end{align*}
If the fluid pressure is fixed by the boundary conditions, $Q_f$ may instead be taken as $L^2(\Omega_f)$. The $H^1$ regularity of $Q_p$ guarantees that the pressure trace on $\Gamma$ is well defined.

For a domain $D$, the $L^2(D)$ inner product is denoted by $(\cdot,\cdot)_D$, and the $L^2(\Gamma)$ interface pairing is denoted by $\langle\cdot,\cdot\rangle_\Gamma$. We define
\begin{align*}
a_f(\mathbf{u}_f,\mathbf{v}_f)&=2\mu_f\big(\boldsymbol{\varepsilon}(\mathbf{u}_f),\boldsymbol{\varepsilon}(\mathbf{v}_f)\big)_{\Omega_f}, \quad a_d(\mathbf{u}_p,\mathbf{v}_p)=\mu_f\big(\mathbf{K}^{-1}\mathbf{u}_p,\mathbf{v}_p\big)_{\Omega_p}, \\
 a_p(\boldsymbol{\eta}_p,\boldsymbol{\omega}_p)&=2\mu_p\big(\boldsymbol{\varepsilon}(\boldsymbol{\eta}_p),\boldsymbol{\varepsilon}(\boldsymbol{\omega}_p)\big)_{\Omega_p}+\lambda_p\big(\nabla\cdot\boldsymbol{\eta}_p,\nabla\cdot\boldsymbol{\omega}_p\big)_{\Omega_p}, \\
b_f(\mathbf{v}_f,q_f)&=-\big(\nabla\cdot\mathbf{v}_f,q_f\big)_{\Omega_f},\quad b_p(\mathbf{v},q_p)=-\big(\nabla\cdot\mathbf{v},q_p\big)_{\Omega_p}, 
\end{align*}
where $b_p$ is defined for $\mathbf{v}\in\mathbf{V}_p$ or $\mathbf{X}_p$ and $q_p\in Q_p$. The normal and tangential interface forms are defined by
\begin{align*}
c_\Gamma(\mathbf{v}_f,\boldsymbol{\omega}_p,\mathbf{v}_p;q_p)
&=\left\langle\mathbf{v}_f\cdot\mathbf{n}_f+(\boldsymbol{\omega}_p+\mathbf{v}_p)\cdot\mathbf{n}_p,q_p\right\rangle_\Gamma,\nonumber \\
a_{\mathrm{BJS}}(\mathbf{u}_f,\boldsymbol{\xi}_p;\mathbf{v}_f,\boldsymbol{\omega}_p)
&=\sum_{j=1}^{d-1}\left\langle\gamma_{\mathrm{BJS},j}(\mathbf{u}_f-\boldsymbol{\xi}_p)\cdot\boldsymbol{\tau}_j,(\mathbf{v}_f-\boldsymbol{\omega}_p)\cdot\boldsymbol{\tau}_j\right\rangle_\Gamma. 
\end{align*}
We further introduce the bulk operators
\begin{align*}
\mathcal{A}_f\big((\mathbf{u}_f,p_f),(\mathbf{v}_f,q_f)\big)&=\rho_f(\partial_t\mathbf{u}_f,\mathbf{v}_f)_{\Omega_f}+a_f(\mathbf{u}_f,\mathbf{v}_f)+b_f(\mathbf{v}_f,p_f)\\
&\quad-b_f(\mathbf{u}_f,q_f),\\
\mathcal{A}_p\big((\boldsymbol{\eta}_p,\boldsymbol{\xi}_p,\mathbf{u}_p,p_p),(\boldsymbol{\omega}_p,\boldsymbol{\phi}_p,\mathbf{v}_p,q_p)\big)
&=\rho_p(\partial_t\boldsymbol{\xi}_p,\boldsymbol{\omega}_p)_{\Omega_p}+a_p(\boldsymbol{\eta}_p,\boldsymbol{\omega}_p)+\alpha b_p(\boldsymbol{\omega}_p,p_p) \nonumber\\
&\quad+(\partial_t\boldsymbol{\eta}_p-\boldsymbol{\xi}_p,\boldsymbol{\phi}_p)_{\Omega_p} 
+a_d(\mathbf{u}_p,\mathbf{v}_p)+b_p(\mathbf{v}_p,p_p)\nonumber\\
&\quad+c_0(\partial_t p_p,q_p)_{\Omega_p}-\alpha b_p(\boldsymbol{\xi}_p,q_p)-b_p(\mathbf{u}_p,q_p), 
\end{align*}
and the forcing functionals
\begin{align*}
\mathcal{F}_f(\mathbf{v}_f,q_f)&=(\mathbf{f}_f,\mathbf{v}_f)_{\Omega_f}+(\phi_f,q_f)_{\Omega_f}, \\
\mathcal{F}_p(\boldsymbol{\omega}_p, \mathbf{v}_{p},q_p)&=(\mathbf{f}_p,\boldsymbol{\omega}_p)_{\Omega_p}+(\mathbf{f}_{d},\mathbf{v}_p)_{\Omega_p}+(\phi_d,q_p)_{\Omega_p}. 
\end{align*}
The fully coupled variational problem reads as follows: for a.e. $t\in(0,T]$, find
$
(\mathbf{u}_f,p_f,\boldsymbol{\eta}_p,\boldsymbol{\xi}_p,\mathbf{u}_p,p_p)
\in\mathbf{V}_f\times Q_f\times\mathbf{V}_p\times\mathbf{V}_p\times\mathbf{X}_p\times Q_p
$
such that, for every
$
(\mathbf{v}_f,q_f,\boldsymbol{\omega}_p,\boldsymbol{\phi}_p,\\
\mathbf{v}_p,q_p)
\in\mathbf{V}_f\times Q_f\times\mathbf{V}_p\times\mathbf{V}_p\times\mathbf{X}_p\times Q_p,
$
\begin{align*}
&\mathcal{A}_f\big((\mathbf{u}_f,p_f),(\mathbf{v}_f,q_f)\big)
+\mathcal{A}_p\big((\boldsymbol{\eta}_p,\boldsymbol{\xi}_p,\mathbf{u}_p,p_p),(\boldsymbol{\omega}_p,\boldsymbol{\phi}_p,\mathbf{v}_p,q_p)\big) \nonumber\\
&\quad+c_\Gamma(\mathbf{v}_f,\boldsymbol{\omega}_p,\mathbf{v}_p;p_p)
+a_{\mathrm{BJS}}(\mathbf{u}_f,\boldsymbol{\xi}_p;\mathbf{v}_f,\boldsymbol{\omega}_p)
-c_\Gamma(\mathbf{u}_f,\boldsymbol{\xi}_p,\mathbf{u}_p;q_p) \nonumber\\
&=\mathcal{F}_f(\mathbf{v}_f,q_f)+\mathcal{F}_p(\boldsymbol{\omega}_p, q_p).
\end{align*}
The first normal interface term results from the fluid, structure, and Darcy boundary contributions, whereas the second weakly imposes the interface mass-conservation condition. Their skew-symmetric arrangement is convenient for the energy analysis.

Unlike the fluid pressure, the pore pressure is taken in
$H^1(\Omega_p)$ so that its trace on the fluid--poroelastic interface
is well defined, as required by the coupling terms. This choice is
consistent with the variational settings used in
\cite{cesmelioglu2017analysis,showalter2005poroelastic}. The
well-posedness of related coupled Stokes--Biot systems has been
established in these works. In particular, for the linear Stokes case
considered here, the existence and uniqueness result is global in time
and does not require the small-data restriction associated with the
Navier--Stokes nonlinearity. We therefore assume in what follows that
the coupled problem admits a unique sufficiently regular solution.



\section{Robin reformulation and partitioned weak formulation}
\label{sec:robin_weak_form}

In this section, we reformulate the original interface conditions of
the coupled Stokes--Biot problem introduced in Section~\ref{sec:fully_coupled_model}
 into Robin-type transmission
conditions and introduce an auxiliary interface variable, with the aim
of constructing an efficient partitioned solution strategy.

\subsection{Robin transmission conditions}
\label{subsec:robin_conditions}

Let $L>0$ be a given Robin parameter. For clarity, a common parameter is used for all normal Robin conditions; the formulation can be extended to distinct parameters for the fluid, structure, and Darcy conditions. Using \eqref{eq:interface_mass}--\eqref{eq:interface_normal_stress}, the normal coupling conditions can be equivalently rewritten as
\begin{align*}
L\mathbf{u}_f\cdot\mathbf{n}_f+\boldsymbol{\sigma}_f(\mathbf{u}_f,p_f)\mathbf{n}_f\cdot\mathbf{n}_f
&=-L(\boldsymbol{\xi}_p+\mathbf{u}_p)\cdot\mathbf{n}_p+\boldsymbol{\sigma}_p(\boldsymbol{\eta}_p,p_p)\mathbf{n}_p\cdot\mathbf{n}_p, \\
L(\boldsymbol{\xi}_p+\mathbf{u}_p)\cdot\mathbf{n}_p+\boldsymbol{\sigma}_p(\boldsymbol{\eta}_p,p_p)\mathbf{n}_p\cdot\mathbf{n}_p
&=-L\mathbf{u}_f\cdot\mathbf{n}_f+\boldsymbol{\sigma}_f(\mathbf{u}_f,p_f)\mathbf{n}_f\cdot\mathbf{n}_f,\\
L(\boldsymbol{\xi}_p+\mathbf{u}_p)\cdot\mathbf{n}_p-p_p
&=-L\mathbf{u}_f\cdot\mathbf{n}_f+\boldsymbol{\sigma}_f(\mathbf{u}_f,p_f)\mathbf{n}_f\cdot\mathbf{n}_f.
\end{align*}
Indeed, the traction balance and normal-stress condition imply
\begin{equation*}
\boldsymbol{\sigma}_p(\boldsymbol{\eta}_p,p_p)\mathbf{n}_p\cdot\mathbf{n}_p=-p_p
\qquad\text{on }\Gamma.
\label{eq:poroelastic_normal_stress}
\end{equation*}
The  tangential component of the traction balance \eqref{tangential slip} yield
\begin{align*}
\gamma_{\mathrm{BJS},j}\mathbf{u}_f\cdot\boldsymbol{\tau}_j+\boldsymbol{\sigma}_f(\mathbf{u}_f,p_f)\mathbf{n}_f\cdot\boldsymbol{\tau}_j
&=\gamma_{\mathrm{BJS},j}\boldsymbol{\xi}_p\cdot\boldsymbol{\tau}_j, \\
\gamma_{\mathrm{BJS},j}\boldsymbol{\xi}_p\cdot\boldsymbol{\tau}_j+\boldsymbol{\sigma}_p(\boldsymbol{\eta}_p,p_p)\mathbf{n}_p\cdot\boldsymbol{\tau}_j
&=\gamma_{\mathrm{BJS},j}\mathbf{u}_f\cdot\boldsymbol{\tau}_j, 
\end{align*}
for $j=1,\ldots,d-1$.
\begin{remark}
The reformulation of the original coupling conditions into Robin-type
transmission conditions through suitable linear combinations has been
widely used in partitioned methods for coupled problems, including
Stokes--Darcy systems, FSI, and FPSI problems; see, for example,
\cite{burman2022fully,he2026locking, sun2021domain}. The equivalence between the reformulated
problem and the original coupled formulation has been discussed in
related settings in \cite{he2026locking,sun2021domain}. The corresponding equivalence
for the present formulation can be established by analogous arguments
and is therefore omitted here.
\end{remark}
\subsection{Auxiliary interface variable}
\label{subsec:auxiliary_variable}

We introduce an auxiliary scalar interface variable $\theta\in L^2(\Gamma)$ by
\begin{equation}
\theta=-L\mathbf{u}_f\cdot\mathbf{n}_f-p_p=L(\boldsymbol{\xi}_p+\mathbf{u}_p)\cdot\mathbf{n}_p-p_p,
\label{eq:theta_definition}
\end{equation}
where the second identity follows from \eqref{eq:interface_mass}. If the normal traction possesses an $L^2(\Gamma)$ trace, then \eqref{eq:interface_normal_stress} also gives
\begin{align*}
\theta=-L\mathbf{u}_f\cdot\mathbf{n}_f+\boldsymbol{\sigma}_f(\mathbf{u}_f,p_f)\mathbf{n}_f\cdot\mathbf{n}_f.
\end{align*}
Consequently, the normal Robin conditions become
\begin{align*}
L\mathbf{u}_f\cdot\mathbf{n}_f+\boldsymbol{\sigma}_f(\mathbf{u}_f,p_f)\mathbf{n}_f\cdot\mathbf{n}_f
&=\theta-2L(\boldsymbol{\xi}_p+\mathbf{u}_p)\cdot\mathbf{n}_p,\\
L(\boldsymbol{\xi}_p+\mathbf{u}_p)\cdot\mathbf{n}_p+\boldsymbol{\sigma}_p(\boldsymbol{\eta}_p,p_p)\mathbf{n}_p\cdot\mathbf{n}_p
&=\theta, \\
L(\boldsymbol{\xi}_p+\mathbf{u}_p)\cdot\mathbf{n}_p-p_p
&=\theta. 
\end{align*}
Thus, $\theta$ is the common normal Robin datum transmitted between the fluid and poroelastic subproblems. The original interface conditions and the Robin conditions above are equivalent whenever the stated traces are well defined.

We emphasize that $\theta$ should not be interpreted as a Lagrange
multiplier. Unlike a Lagrange multiplier, which is introduced as an
independent unknown to enforce an interface constraint, $\theta$ is
defined algebraically from the physical interface quantities through
\eqref{eq:theta_definition}. It introduces no additional constraint
into the continuous problem and serves only as an auxiliary Robin datum
for transferring normal interface information between the two
subproblems.

\subsection{Robin-split weak formulation with consistent Darcy residual stabilization}
\label{subsec:robin_split_weak_form}

We use the function spaces, bulk operators, and forcing functionals introduced in Section~\ref{sec:fully_coupled_model}. We only define the additional Robin interface forms. For the fluid subproblem, let
\begin{align*}
\mathcal{R}_f(\mathbf{u}_f,\mathbf{v}_f)
&=L\left\langle\mathbf{u}_f\cdot\mathbf{n}_f,\mathbf{v}_f\cdot\mathbf{n}_f\right\rangle_\Gamma+a_{\mathrm{BJS}}(\mathbf{u}_f,\mathbf{0};\mathbf{v}_f,\mathbf{0}), \\
\mathcal{C}_f(\theta,\boldsymbol{\xi}_p,\mathbf{u}_p;\mathbf{v}_f)
&=\left\langle\theta-2L(\boldsymbol{\xi}_p+\mathbf{u}_p)\cdot\mathbf{n}_p,\mathbf{v}_f\cdot\mathbf{n}_f\right\rangle_\Gamma
-a_{\mathrm{BJS}}(\mathbf{0},\boldsymbol{\xi}_p;\mathbf{v}_f,\mathbf{0}). 
\end{align*}
The fluid Robin subproblem is: find $(\mathbf{u}_f,p_f)\in\mathbf{V}_f\times Q_f$ such that
\begin{equation*}
\mathcal{A}_f\big((\mathbf{u}_f,p_f),(\mathbf{v}_f,q_f)\big)+\mathcal{R}_f(\mathbf{u}_f,\mathbf{v}_f)
=\mathcal{F}_f(\mathbf{v}_f,q_f)+\mathcal{C}_f(\theta,\boldsymbol{\xi}_p,\mathbf{u}_p;\mathbf{v}_f)
\end{equation*}
for every $(\mathbf{v}_f,q_f)\in\mathbf{V}_f\times Q_f$.

For the poroelastic subproblem, define
\begin{align*}
\mathcal{R}_p\big((\boldsymbol{\xi}_p,\mathbf{u}_p);(\boldsymbol{\omega}_p,\mathbf{v}_p)\big)
&=L\left\langle(\boldsymbol{\xi}_p+\mathbf{u}_p)\cdot\mathbf{n}_p,(\boldsymbol{\omega}_p+\mathbf{v}_p)\cdot\mathbf{n}_p\right\rangle_\Gamma
+a_{\mathrm{BJS}}(\mathbf{0},\boldsymbol{\xi}_p;\mathbf{0},\boldsymbol{\omega}_p), \\
\mathcal{C}_p(\theta,\mathbf{u}_f;\boldsymbol{\omega}_p,\mathbf{v}_p)
&=\left\langle\theta,(\boldsymbol{\omega}_p+\mathbf{v}_p)\cdot\mathbf{n}_p\right\rangle_\Gamma
-a_{\mathrm{BJS}}(\mathbf{u}_f,\mathbf{0};\mathbf{0},\boldsymbol{\omega}_p). 
\end{align*}
The pore pressure requires an $H^1(\Omega_p)$ trace at the interface, whereas the unstabilized Darcy block uses an $\mathbf H(\operatorname{div};\Omega_p)$-conforming velocity space. To permit standard $H^1$-conforming finite element spaces for both Darcy variables, for $(\mathbf{u}_p,p_p),(\mathbf{v}_p,q_p)\in\mathbf{X}_p\times Q_p$, we introduce the consistent Darcy residual stabilization term
\begin{align*}
s_D\big((\mathbf{u}_p,p_p),(\mathbf{v}_p,q_p)\big)
&=\frac{1}{2}\left(-\mu_f\mathbf{K}^{-1}\mathbf{v}_p+\nabla q_p,\,\mu_f^{-1}\mathbf{K}\big(\mu_f\mathbf{K}^{-1}\mathbf{u}_p+\nabla p_p\big)\right)_{\Omega_p}.
\end{align*}
The residual-based stabilization employed here was originally introduced for the Darcy problem in~\cite{masud2002stabilized}. To the best of our knowledge, this stabilization technique has not previously been applied to the Biot problem in this coupling system. The stabilized poroelastic bulk operator and forcing functional are given by
\begin{align*}
&\mathcal{A}_{p,\mathrm{stab}}\big((\boldsymbol{\eta}_p,\boldsymbol{\xi}_p,\mathbf{u}_p,p_p),(\boldsymbol{\omega}_p,\boldsymbol{\phi}_p,\mathbf{v}_p,q_p)\big)\nonumber\\
&=\mathcal{A}_p\big((\boldsymbol{\eta}_p,\boldsymbol{\xi}_p,\mathbf{u}_p,p_p),(\boldsymbol{\omega}_p,\boldsymbol{\phi}_p,\mathbf{v}_p,q_p)\big) +s_D\big((\mathbf{u}_p,p_p),(\mathbf{v}_p,q_p)\big), \\
&\mathcal{F}_{p,\mathrm{stab}}(\boldsymbol{\omega}_p,\mathbf{v}_p,q_p)
=\mathcal{F}_p(\boldsymbol{\omega}_p,\mathbf{v}_p,q_p)+\frac{1}{2}\left(-\mu_f\mathbf{K}^{-1}\mathbf{v}_p+\nabla q_p,\,\mu_f^{-1}\mathbf{K}\mathbf{f}_d\right)_{\Omega_p}. 
\end{align*}
The stabilized poroelastic Robin subproblem is: find
$
(\boldsymbol{\eta}_p,\boldsymbol{\xi}_p,\mathbf{u}_p,p_p)
\in\mathbf{V}_p\times\mathbf{V}_p\times\mathbf{X}_p\times Q_p
$
such that
\begin{align*}
&\mathcal{A}_{p,\mathrm{stab}}\big((\boldsymbol{\eta}_p,\boldsymbol{\xi}_p,\mathbf{u}_p,p_p),(\boldsymbol{\omega}_p,\boldsymbol{\phi}_p,\mathbf{v}_p,q_p)\big)
+\mathcal{R}_p\big((\boldsymbol{\xi}_p,\mathbf{u}_p);(\boldsymbol{\omega}_p,\mathbf{v}_p)\big) \nonumber\\
&\qquad=\mathcal{F}_{p,\mathrm{stab}}(\boldsymbol{\omega}_p,\mathbf{v}_p,q_p)+\mathcal{C}_p(\theta,\mathbf{u}_f;\boldsymbol{\omega}_p,\mathbf{v}_p),
\end{align*}
for every $(\boldsymbol{\omega}_p,\boldsymbol{\phi}_p,\mathbf{v}_p,q_p)\in\mathbf{V}_p\times\mathbf{V}_p\times\mathbf{X}_p\times Q_p$. At the continuous level, the fluid and stabilized poroelastic Robin subproblems remain coupled through their interface data; the time-discrete schemes below evaluate these data explicitly from previous time steps.

Note that the stabilization is consistent because the exact Darcy solution satisfies
\begin{equation*}
\mu_f\mathbf{K}^{-1}\mathbf{u}_p+\nabla p_p=\mathbf{f}_d
\qquad\text{in }\Omega_p.
\label{eq:darcy_residual_identity}
\end{equation*}
Moreover, choosing $(\mathbf{v}_p,q_p)=(\mathbf{u}_p,p_p)$ gives
\begin{align*}
&a_d(\mathbf{u}_p,\mathbf{u}_p)+b_p(\mathbf{u}_p,p_p)-b_p(\mathbf{u}_p,p_p)+s_D\big((\mathbf{u}_p,p_p),(\mathbf{u}_p,p_p)\big)\\
 &=\frac{\mu_f}{2}\|\mathbf{K}^{-1/2}\mathbf{u}_p\|_{\Omega_p}^2+\frac{1}{2\mu_f}\|\mathbf{K}^{1/2}\nabla p_p\|_{\Omega_p}^2.
\end{align*}
Thus, the stabilized formulation controls both the Darcy velocity and the pressure gradient and permits standard continuous finite element subspaces of $\mathbf{X}_{p}\times Q_{p}$, without requiring an $\mathbf{H}(\operatorname{div})$-conforming discrete velocity space. For $\mathbf{f}_d=\rho_f\mathbf{g}$, its additional forcing term is
$$
\frac{1}{2}\left(-\mu_f\mathbf{K}^{-1}\mathbf{v}_p+\nabla q_p,\,\mu_f^{-1}\mathbf{K}\rho_f\mathbf{g}\right)_{\Omega_p}.
$$

\section{Fully discrete Robin partitioned schemes}
\label{sec:fully_discrete_schemes}

In this section, we introduce the spatial and temporal discretizations shared by the first- and second-order schemes. We first derive the time-discrete Robin transmission conditions and the corresponding update formulas for the auxiliary interface variable. The first- and second-order schemes, together with their stability analysis, will be presented in Subsections~\ref{subsec:first_order_scheme} and \ref{subsec:second_order_scheme}, respectively.

Let $\mathcal{T}_{f,h}$ and $\mathcal{T}_{p,h}$ be shape-regular and quasi-uniform simplicial triangulations of $\Omega_f$ and $\Omega_p$, respectively, consisting of triangles when $d=2$ and tetrahedra when $d=3$. We denote the corresponding mesh sizes by $h_f$ and $h_p$, and set $h=\max\{h_f,h_p\}$. The two triangulations are assumed to be matching on $\Gamma$, and the induced interface triangulation is denoted by $\mathcal{T}_{\Gamma,h}$.

For an integer $k\geq1$, we introduce the conforming finite element spaces
\begin{align*}
\mathbf{V}_{f,h}
&=\left\{\mathbf{v}_{f,h}\in\mathbf{V}_f\cap[C^0(\overline{\Omega}_f)]^d:
\mathbf{v}_{f,h}|_K\in[\mathbb{P}_{k+1}(K)]^d
\quad\forall K\in\mathcal{T}_{f,h}\right\}, \\
Q_{f,h}
&=\left\{q_{f,h}\in Q_f\cap C^0(\overline{\Omega}_f):
q_{f,h}|_K\in\mathbb{P}_k(K)
\quad\forall K\in\mathcal{T}_{f,h}\right\}, \\
\mathbf{V}_{p,h}
&=\left\{\boldsymbol{\omega}_{p,h}\in\mathbf{V}_p\cap[C^0(\overline{\Omega}_p)]^d:
\boldsymbol{\omega}_{p,h}|_K\in[\mathbb{P}_k(K)]^d
\quad\forall K\in\mathcal{T}_{p,h}\right\}, \\
\mathbf{X}_{p,h}
&=\left\{\mathbf{v}_{p,h}\in\mathbf{X}_p\cap[C^0(\overline{\Omega}_p)]^d:
\mathbf{v}_{p,h}|_K\in[\mathbb{P}_k(K)]^d
\quad\forall K\in\mathcal{T}_{p,h}\right\}, \\
Q_{p,h}
&=\left\{q_{p,h}\in Q_p\cap C^0(\overline{\Omega}_p):
q_{p,h}|_K\in\mathbb{P}_k(K)
\quad\forall K\in\mathcal{T}_{p,h}\right\}. 
\end{align*}
Thus, the Taylor--Hood pair is used for the fluid velocity and pressure, whereas continuous piecewise polynomial spaces of degree $k$ are used for the structure displacement, structure velocity, Darcy velocity, and pore pressure. We assume that $\mathbf{V}_{f,h}\times Q_{f,h}$ satisfies the discrete inf--sup condition
\begin{equation}
\inf_{q_{f,h}\in Q_{f,h}}\sup_{\mathbf{v}_{f,h}\in\mathbf{V}_{f,h}}
\frac{b_f(\mathbf{v}_{f,h},q_{f,h})}
{\|\mathbf{v}_{f,h}\|_{H^1(\Omega_f)}\|q_{f,h}\|_{L^2(\Omega_f)}}
\geq\beta_0,
\label{eq:discrete_fluid_inf_sup}
\end{equation}
where $\beta_0>0$ is independent of $h$. Owing to the residual stabilization introduced in Subsection~\ref{subsec:robin_split_weak_form}, no discrete Darcy inf--sup condition is imposed on $\mathbf{X}_{p,h}\times Q_{p,h}$; see \cite{masud2002stabilized} for further details.

To approximate the auxiliary interface variable, we define
\begin{align*}
\Sigma_h
=\left\{\psi_h\in L^2(\Gamma):
\psi_h|_e\in\mathbb{P}_k(e)
\quad\forall e\in\mathcal{T}_{\Gamma,h}\right\}.
\end{align*}
We also denote by $\Pi_\Sigma:L^2(\Gamma)\rightarrow\Sigma_h$ the $L^2(\Gamma)$-orthogonal projection satisfying
\begin{equation}
\left\langle\Pi_\Sigma\varphi-\varphi,\psi_h\right\rangle_\Gamma=0,
\qquad\forall\psi_h\in\Sigma_h.
\label{eq:interface_projection}
\end{equation}
Since the fluid velocity is approximated by polynomials of degree $k+1$, its normal trace does not necessarily belong to $\Sigma_h$. Therefore, the update of the auxiliary interface variable will be imposed in the projected weak form \eqref{eq:interface_projection}.

Let $N\in\mathbb{N}$, $\Delta t=T/N$, and $t^n=n\Delta t$ for $n=0,\ldots,N$. For a time-dependent function $z$, we write $z^n=z(t^n)$. The backward Euler and BDF2 difference operators are defined by
\begin{align*}
d_t z^{n+1}
&=\frac{z^{n+1}-z^n}{\Delta t},
\qquad n\geq0, \\
d_t^{(2)}z^{n+1}
&=\frac{3z^{n+1}-4z^n+z^{n-1}}{2\Delta t},
\qquad n\geq1. 
\end{align*}

The spatial bilinear forms introduced in Sections~\ref{sec:fully_coupled_model} and \ref{sec:robin_weak_form} are restricted to the corresponding finite element spaces without changing their notation. For $\delta_t=d_t$ or $\delta_t=d_t^{(2)}$, we define the discrete fluid bulk operator by
\begin{align*}
\mathcal{A}_{f,h}^{\delta_t}
\big((\mathbf{u}_{f,h}^{n+1},p_{f,h}^{n+1}),
(\mathbf{v}_{f,h},q_{f,h})\big)
&=\rho_f(\delta_t\mathbf{u}_{f,h}^{n+1},\mathbf{v}_{f,h})_{\Omega_f}
+a_f(\mathbf{u}_{f,h}^{n+1},\mathbf{v}_{f,h}) \nonumber\\
&\quad+b_f(\mathbf{v}_{f,h},p_{f,h}^{n+1})
-b_f(\mathbf{u}_{f,h}^{n+1},q_{f,h}).
\end{align*}
The stabilized poroelastic bulk operator is defined by
\begin{align*}
&\mathcal{A}_{p,h,\mathrm{stab}}^{\delta_t}
\big((\boldsymbol{\eta}_{p,h}^{n+1},\boldsymbol{\xi}_{p,h}^{n+1},
\mathbf{u}_{p,h}^{n+1},p_{p,h}^{n+1}),
(\boldsymbol{\omega}_{p,h},\boldsymbol{\phi}_{p,h},
\mathbf{v}_{p,h},q_{p,h})\big) \nonumber\\
&\quad=\rho_p(\delta_t\boldsymbol{\xi}_{p,h}^{n+1},
\boldsymbol{\omega}_{p,h})_{\Omega_p}
+a_p(\boldsymbol{\eta}_{p,h}^{n+1},\boldsymbol{\omega}_{p,h})
+\alpha b_p(\boldsymbol{\omega}_{p,h},p_{p,h}^{n+1}) \nonumber\\
&\qquad+(\delta_t\boldsymbol{\eta}_{p,h}^{n+1}
-\boldsymbol{\xi}_{p,h}^{n+1},\boldsymbol{\phi}_{p,h})_{\Omega_p}
+a_d(\mathbf{u}_{p,h}^{n+1},\mathbf{v}_{p,h})
+b_p(\mathbf{v}_{p,h},p_{p,h}^{n+1}) \nonumber\\
&\qquad+c_0(\delta_t p_{p,h}^{n+1},q_{p,h})_{\Omega_p}
-\alpha b_p(\boldsymbol{\xi}_{p,h}^{n+1},q_{p,h})
-b_p(\mathbf{u}_{p,h}^{n+1},q_{p,h}) \nonumber\\
&\qquad+s_D\big((\mathbf{u}_{p,h}^{n+1},p_{p,h}^{n+1}),
(\mathbf{v}_{p,h},q_{p,h})\big).
\end{align*}
The corresponding time-dependent forcing functionals are
\begin{align*}
\mathcal{F}_{f,h}^{n+1}(\mathbf{v}_{f,h},q_{f,h})
&=(\mathbf{f}_f^{n+1},\mathbf{v}_{f,h})_{\Omega_f}
+(\phi_f^{n+1},q_{f,h})_{\Omega_f}, \\
\mathcal{F}_{p,h,\mathrm{stab}}^{n+1}
(\boldsymbol{\omega}_{p,h},\mathbf{v}_{p,h},q_{p,h})
&=(\mathbf{f}_p^{n+1},\boldsymbol{\omega}_{p,h})_{\Omega_p}
+(\mathbf{f}_d^{n+1},\mathbf{v}_{p,h})_{\Omega_p}
+(\phi_d^{n+1},q_{p,h})_{\Omega_p} \nonumber\\
&\quad+\frac{1}{2}\left(
-\mu_f\mathbf{K}^{-1}\mathbf{v}_{p,h}+\nabla q_{p,h},
\mu_f^{-1}\mathbf{K}\mathbf{f}_{d}
\right)_{\Omega_p}.
\end{align*}

\subsection{Time-discrete Robin conditions and interface update}
\label{subsec:discrete_robin_update}

We first derive the time-discrete Robin transmission conditions used in the first-order scheme. At time $t^{n+1}$, the normal conditions are written as
\begin{align}
L\mathbf{u}_{f,h}^{n+1}\cdot\mathbf{n}_f
+\boldsymbol{\sigma}_f(\mathbf{u}_{f,h}^{n+1},p_{f,h}^{n+1})
\mathbf{n}_f\cdot\mathbf{n}_f
&=\theta_h^n
-2L(\boldsymbol{\xi}_{p,h}^{n+1}+\mathbf{u}_{p,h}^{n+1})
\cdot\mathbf{n}_p, \nonumber\\
L(\boldsymbol{\xi}_{p,h}^{n+1}+\mathbf{u}_{p,h}^{n+1})
\cdot\mathbf{n}_p
+\boldsymbol{\sigma}_p(\boldsymbol{\eta}_{p,h}^{n+1},p_{p,h}^{n+1})
\mathbf{n}_p\cdot\mathbf{n}_p
&=\theta_h^n, \nonumber\\
L(\boldsymbol{\xi}_{p,h}^{n+1}+\mathbf{u}_{p,h}^{n+1})
\cdot\mathbf{n}_p-p_{p,h}^{n+1}
&=\theta_h^n. \label{eq:first_order_darcy_normal_robin}
\end{align}
The last two conditions are equivalent by the normal-stress relation
$\boldsymbol{\sigma}_p\mathbf{n}_p\cdot\mathbf{n}_p=-p_p$ on $\Gamma$. The tangential conditions are discretized explicitly as
\begin{align*}
\gamma_{\mathrm{BJS},j}\mathbf{u}_{f,h}^{n+1}\cdot\boldsymbol{\tau}_j
+\boldsymbol{\sigma}_f(\mathbf{u}_{f,h}^{n+1},p_{f,h}^{n+1})
\mathbf{n}_f\cdot\boldsymbol{\tau}_j
&=\gamma_{\mathrm{BJS},j}
\boldsymbol{\xi}_{p,h}^{n}\cdot\boldsymbol{\tau}_j,
\\
\gamma_{\mathrm{BJS},j}\boldsymbol{\xi}_{p,h}^{n+1}\cdot\boldsymbol{\tau}_j
+\boldsymbol{\sigma}_p(\boldsymbol{\eta}_{p,h}^{n+1},p_{p,h}^{n+1})
\mathbf{n}_p\cdot\boldsymbol{\tau}_j
&=\gamma_{\mathrm{BJS},j}
\mathbf{u}_{f,h}^{n}\cdot\boldsymbol{\tau}_j,
\end{align*}
for $j=1,\ldots,d-1$.

It remains to update the auxiliary interface variable. From
\eqref{eq:first_order_darcy_normal_robin} and the continuous definition \eqref{eq:theta_definition}, we obtain 
\begin{align*}
\theta_h^{n+1}
=\theta_h^n
-L\Big(
\mathbf{u}_{f,h}^{n+1}\cdot\mathbf{n}_f
+(\boldsymbol{\xi}_{p,h}^{n+1}+\mathbf{u}_{p,h}^{n+1})
\cdot\mathbf{n}_p
\Big).
\end{align*}
Since $\theta_h^{n+1}\in\Sigma_h$, the update is imposed with the projected \eqref{eq:interface_projection} as follows:
\begin{align}
\langle\theta_h^{n+1},\psi_h\rangle_\Gamma
&=\langle\theta_h^n,\psi_h\rangle_\Gamma
-L\langle
\Pi_{\Sigma}\mathbf{u}_{f,h}^{n+1}\cdot\mathbf{n}_f
+(\boldsymbol{\xi}_{p,h}^{n+1}+\mathbf{u}_{p,h}^{n+1})
\cdot\mathbf{n}_p,\psi_h
\rangle_\Gamma
\label{eq:first_order_theta_update}
\end{align}
for every $\psi_h\in\Sigma_h$.

For the second-order scheme, the interface data required by the poroelastic subproblem are approximated by second-order extrapolation. The normal Robin conditions become
\begin{align}
L\mathbf{u}_{f,h}^{n+1}\cdot\mathbf{n}_f
+\boldsymbol{\sigma}_f(\mathbf{u}_{f,h}^{n+1},p_{f,h}^{n+1})
\mathbf{n}_f\cdot\mathbf{n}_f
&=2\theta_h^n-\theta_h^{n-1}
-2L(\boldsymbol{\xi}_{p,h}^{n+1}+\mathbf{u}_{p,h}^{n+1})
\cdot\mathbf{n}_p, \nonumber\\
L(\boldsymbol{\xi}_{p,h}^{n+1}+\mathbf{u}_{p,h}^{n+1})
\cdot\mathbf{n}_p
+\boldsymbol{\sigma}_p(\boldsymbol{\eta}_{p,h}^{n+1},p_{p,h}^{n+1})
\mathbf{n}_p\cdot\mathbf{n}_p
&=2\theta_h^n-\theta_h^{n-1}, \nonumber\\
L(\boldsymbol{\xi}_{p,h}^{n+1}+\mathbf{u}_{p,h}^{n+1})
\cdot\mathbf{n}_p-p_{p,h}^{n+1}
&=2\theta_h^n-\theta_h^{n-1}. \nonumber
\end{align}
The corresponding tangential conditions are
\begin{align*}
\gamma_{\mathrm{BJS},j}\mathbf{u}_{f,h}^{n+1}\cdot\boldsymbol{\tau}_j
+\boldsymbol{\sigma}_f(\mathbf{u}_{f,h}^{n+1},p_{f,h}^{n+1})
\mathbf{n}_f\cdot\boldsymbol{\tau}_j
&=\gamma_{\mathrm{BJS},j}
(\boldsymbol{\xi}_{p,h}^{n+1})\cdot\boldsymbol{\tau}_j,
\\
\gamma_{\mathrm{BJS},j}\boldsymbol{\xi}_{p,h}^{n+1}\cdot\boldsymbol{\tau}_j
+\boldsymbol{\sigma}_p(\boldsymbol{\eta}_{p,h}^{n+1},p_{p,h}^{n+1})
\mathbf{n}_p\cdot\boldsymbol{\tau}_j
&=\gamma_{\mathrm{BJS},j}
(2\mathbf{u}_{f,h}^{n}-\mathbf{u}_{f,h}^{n-1})
\cdot\boldsymbol{\tau}_j,
\end{align*}
for $j=1,\ldots,d-1$.

 Furthermore, similar to \eqref{eq:first_order_theta_update}, the projected update for the auxiliary variable $\theta$ is given by
\begin{align*}
\langle\theta_h^{n+1},\psi_h\rangle_\Gamma
&=\langle2\theta_h^n-\theta_h^{n-1},\psi_h\rangle_\Gamma
-L\langle
\Pi_{\Sigma}\mathbf{u}_{f,h}^{n+1}\cdot\mathbf{n}_f
+(\boldsymbol{\xi}_{p,h}^{n+1}+\mathbf{u}_{p,h}^{n+1})
\cdot\mathbf{n}_p,\psi_h\rangle_\Gamma
\end{align*}
for every $\psi_h\in\Sigma_h$.

Finally, the initial interface variable is defined by
\begin{align}
\left\langle\theta_h^0,\psi_h\right\rangle_\Gamma
=\left\langle
-L\mathbf{u}_{f,h}^0\cdot\mathbf{n}_f-p_{p,h}^0,\psi_h
\right\rangle_\Gamma
\qquad\forall\psi_h\in\Sigma_h.
\label{eq:initial_discrete_theta}
\end{align}
The value $\theta_h^1$ required by the second-order scheme is computed using the first-order update \eqref{eq:first_order_theta_update}.

\subsection{A first-order Robin partitioned scheme}
\label{subsec:first_order_scheme}

We now present the first-order scheme obtained by combining the backward Euler method with the time-discrete Robin conditions derived in Subsection~\ref{subsec:discrete_robin_update}. Let the initial approximations $\mathbf{u}_{f,h}^{0}$, $\boldsymbol{\eta}_{p,h}^{0}$, $\boldsymbol{\xi}_{p,h}^{0}$, and $p_{p,h}^{0}$ be given in the corresponding finite element spaces. The initial interface variable $\theta_h^0\in\Sigma_h$ is defined by \eqref{eq:initial_discrete_theta}.

\begin{algorithm}[H]
\scriptsize
\caption{First-order Robin partitioned scheme}
\label{alg:first_order_scheme}
For $n=0,\ldots,N-1$, perform the following steps.
\begin{enumerate}
\item Find
$(\boldsymbol{\eta}_{p,h}^{n+1},\boldsymbol{\xi}_{p,h}^{n+1},
\mathbf{u}_{p,h}^{n+1},p_{p,h}^{n+1})
\in\mathbf{V}_{p,h}\times\mathbf{V}_{p,h}\times
\mathbf{X}_{p,h}\times Q_{p,h},
$
with $\boldsymbol{\xi}_{p,h}^{n+1}=d_t\boldsymbol{\eta}_{p,h}^{n+1}$, such that, for every
$(\boldsymbol{\omega}_{p,h},\mathbf{v}_{p,h},q_{p,h})
\in\mathbf{V}_{p,h}\times\mathbf{X}_{p,h}\times Q_{p,h}$,
\begin{align}
&\mathcal{A}_{p,h,\mathrm{stab}}^{d_t}
\big((\boldsymbol{\eta}_{p,h}^{n+1},\boldsymbol{\xi}_{p,h}^{n+1},
\mathbf{u}_{p,h}^{n+1},p_{p,h}^{n+1}),
(\boldsymbol{\omega}_{p,h},\mathbf{0},\mathbf{v}_{p,h},q_{p,h})\big)\nonumber\\
&\quad+\mathcal{R}_p\big((\boldsymbol{\xi}_{p,h}^{n+1},\mathbf{u}_{p,h}^{n+1});
(\boldsymbol{\omega}_{p,h},\mathbf{v}_{p,h})\big)
\nonumber\\
&=\mathcal{F}_{p,h,\mathrm{stab}}^{n+1}
(\boldsymbol{\omega}_{p,h},\mathbf{v}_{p,h},q_{p,h})
+\left\langle\theta_h^n,
(\boldsymbol{\omega}_{p,h}+\mathbf{v}_{p,h})\cdot\mathbf{n}_p\right\rangle_\Gamma-a_{\mathrm{BJS}}(\mathbf{u}_{f,h}^{n},\mathbf{0};
\mathbf{0},\boldsymbol{\omega}_{p,h}).
\label{eq:first_order_poroelastic_problem}
\end{align}

\item Find $(\mathbf{u}_{f,h}^{n+1},p_{f,h}^{n+1})
\in\mathbf{V}_{f,h}\times Q_{f,h}$ such that, for every
$(\mathbf{v}_{f,h},q_{f,h})\in\mathbf{V}_{f,h}\times Q_{f,h}$,
\begin{align}
&\mathcal{A}_{f,h}^{d_t}
\big((\mathbf{u}_{f,h}^{n+1},p_{f,h}^{n+1}),
(\mathbf{v}_{f,h},q_{f,h})\big)
+\mathcal{R}_f(\mathbf{u}_{f,h}^{n+1},\mathbf{v}_{f,h})
\nonumber\\
&=\mathcal{F}_{f,h}^{n+1}(\mathbf{v}_{f,h},q_{f,h})
+\left\langle
\theta_h^n-2L(\boldsymbol{\xi}_{p,h}^{n+1}+\mathbf{u}_{p,h}^{n+1})\cdot\mathbf{n}_p,
\mathbf{v}_{f,h}\cdot\mathbf{n}_f
\right\rangle_\Gamma
\nonumber\\
&\qquad\quad-a_{\mathrm{BJS}}(\mathbf{0},\boldsymbol{\xi}_{p,h}^{n};
\mathbf{v}_{f,h},\mathbf{0}).
\label{eq:first_order_fluid_problem}
\end{align}

\item Update $\theta_h^{n+1}\in\Sigma_h$ by
\begin{align}
\left\langle\theta_h^{n+1},\psi_h\right\rangle_\Gamma
&=\left\langle\theta_h^n,\psi_h\right\rangle_\Gamma
-L\left\langle
\Pi_{\Sigma}\mathbf{u}_{f,h}^{n+1}\cdot\mathbf{n}_f
+(\boldsymbol{\xi}_{p,h}^{n+1}+\mathbf{u}_{p,h}^{n+1})\cdot\mathbf{n}_p,
\psi_h\right\rangle_\Gamma
\label{eq:first_order_interface_update_algorithm}
\end{align}
for every $\psi_h\in\Sigma_h$.
\end{enumerate}
\end{algorithm}


\paragraph{Stability analysis}
For the stability analysis, we use the algebraic identities
\begin{align}
&2a(a-b)=a^{2}-b^{2}+(a-b)^{2},\label{eq:first_order_algebraic_identity}\\
&ab=\frac{1}{4}\big((a+b)^{2}-(a-b)^{2}\big).\label{eq:first_order_polarization_identity}
\end{align}
We define the discrete energy and dissipation terms by
\begin{align*}
\mathcal{E}_{fluid}^{n+1}
&=\frac{\rho_f}{2}\|\mathbf{u}_{f,h}^{n+1}\|_{L^2(\Omega_f)}^2,\quad\mathcal{J}_{fluid}^{n+1}=2\mu_f\|\boldsymbol{\varepsilon}(\mathbf{u}_{f,h}^{n+1})\|_{L^2(\Omega_f)}^2+\frac{\rho_f\Delta t}{2}\|d_t\mathbf{u}_{f,h}^{n+1}\|_{L^2(\Omega_f)}^2,\\
\mathcal{E}_{poro}^{n+1}
&=\frac{\rho_p}{2}\|\boldsymbol{\xi}_{p,h}^{n+1}\|_{L^2(\Omega_p)}^2
+\mu_p\|\boldsymbol{\varepsilon}(\boldsymbol{\eta}_{p,h}^{n+1})\|_{L^2(\Omega_p)}^2+\frac{\lambda_p}{2}\|\nabla\cdot\boldsymbol{\eta}_{p,h}^{n+1}\|_{L^2(\Omega_p)}^2\\
&\quad
+\frac{c_0}{2}\|p_{p,h}^{n+1}\|_{L^2(\Omega_p)}^2,\\
\mathcal{J}_{poro}^{n+1}
&=\frac{\rho_p\Delta t}{2}\|d_t\boldsymbol{\xi}_{p,h}^{n+1}\|_{L^2(\Omega_p)}^2+\mu_p\Delta t\|\boldsymbol{\varepsilon}(\boldsymbol{\xi}_{p,h}^{n+1})\|_{L^2(\Omega_p)}^2
+\frac{\lambda_p\Delta t}{2}\|\nabla\cdot\boldsymbol{\xi}_{p,h}^{n+1}\|_{L^2(\Omega_p)}^2
\\
&\quad+\frac{c_0\Delta t}{2}\|d_t p_{p,h}^{n+1}\|_{L^2(\Omega_p)}^2+\frac{1}{2\mu_f}\|\mathbf{K}^{1/2}\nabla p_{p,h}^{n+1}\|_{L^2(\Omega_p)}^2
+\frac{\mu_f}{2}\|\mathbf{K}^{-1/2}\mathbf{u}_{p,h}^{n+1}\|_{L^2(\Omega_p)}^2,\\
\mathcal{E}_{h}^{n+1}&=\mathcal{E}_{fluid}^{n+1}+\mathcal{E}_{poro}^{n+1},\quad\mathcal{J}_{h}^{n+1}=\mathcal{J}_{fluid}^{n+1}+\mathcal{J}_{poro}^{n+1},\\
\mathcal{I}_{h}^{n+1}
&=\frac{1}{2L}\|\theta_h^{n+1}\|_{L^2(\Gamma)}^2
+\frac{1}{2}\sum_{j=1}^{d-1}\gamma_{\mathrm{BJS},j}
\left(
\|\mathbf{u}_{f,h}^{n+1}\cdot\boldsymbol{\tau}_j\|_{L^2(\Gamma)}^2
+\|\boldsymbol{\xi}_{p,h}^{n+1}\cdot\boldsymbol{\tau}_j\|_{L^2(\Gamma)}^2
\right),\\[0.3em]
\mathcal{D}_{h}^{n+1}
&=\frac{L}{2}\left\|
\mathbf{u}_{f,h}^{n+1}\cdot\mathbf{n}_f
+(\boldsymbol{\xi}_{p,h}^{n+1}+\mathbf{u}_{p,h}^{n+1})\cdot\mathbf{n}_p
\right\|_{L^2(\Gamma)}^2\\
&\quad+\frac{1}{2}\sum_{j=1}^{d-1}\gamma_{\mathrm{BJS},j}
\left(
\|(\mathbf{u}_{f,h}^{n+1}-\boldsymbol{\xi}_{p,h}^{n})\cdot\boldsymbol{\tau}_j\|_{L^2(\Gamma)}^2
+\|(\boldsymbol{\xi}_{p,h}^{n+1}-\mathbf{u}_{f,h}^{n})\cdot\boldsymbol{\tau}_j\|_{L^2(\Gamma)}^2
\right).
\end{align*} 

For simplicity, we consider the homogeneous case $\mathbf{f}_{f}=\mathbf{f}_{p}=\mathbf{f}_{d}=\mathbf{0}$ and $\phi_{f}=\phi_{d}=0$. The result can be extended to nonhomogeneous data by applying the Cauchy--Schwarz, Poincar\'e, Korn, and Young inequalities, together with the discrete inf--sup condition \eqref{eq:discrete_fluid_inf_sup}. We omit these standard estimates and focus on the energy balance of the proposed scheme.
\begin{theorem}[Stability of the first-order scheme]\label{thm:first_order_stability}
	Let $(\mathbf{u}_{f,h}^{n+1},p_{f,h}^{n+1},\boldsymbol{\xi}_{p,h}^{n+1},\boldsymbol{\eta}_{p,h}^{n+1},\\ \mathbf{u}_{p,h}^{n+1},p_{p,h}^{n+1},\theta_{h}^{n+1})$ be defined by Algorithm~\ref{alg:first_order_scheme}. Then, for any $0\leq\ell\leq N-1$, the following energy inequality holds: 
	\begin{equation}
\mathcal{E}_{h}^{\ell+1}
+\Delta t\,\mathcal{I}_{h}^{\ell+1}
+\Delta t\sum_{n=0}^{\ell}
\left(\mathcal{J}_{h}^{n+1}+\mathcal{D}_{h}^{n+1}\right)\leq\mathcal{E}_{h}^{0}+\Delta t\,\mathcal{I}_{h}^{0}.
\label{eq:first_order_stability_estimate}
\end{equation}
\end{theorem}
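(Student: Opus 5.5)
The plan is a discrete energy argument: test each subproblem of Algorithm~\ref{alg:first_order_scheme} with its own solution at $t^{n+1}$, add the two identities, absorb the interface terms into telescoping quantities plus nonnegative dissipation, multiply by $\Delta t$, and sum over $n=0,\dots,\ell$.

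\textbf{Testing and bulk terms.} In \eqref{eq:first_order_poroelastic_problem} choose $(\boldsymbol{\omega}_{p,h},\mathbf{v}_{p,h},q_{p,h})=(\boldsymbol{\xi}_{p,h}^{n+1},\mathbf{u}_{p,h}^{n+1},p_{p,h}^{n+1})$. In \eqref{eq:first_order_fluid_problem} choose $(\mathbf{v}_{f,h},q_{f,h})=(\mathbf{u}_{f,h}^{n+1},p_{f,h}^{n+1})$. The data are homogeneous, and the bulk terms are handled as follows.
\begin{itemize}
\item The pressure terms cancel: $b_f(\mathbf{u},p)-b_f(\mathbf{u},p)=0$, $b_p(\mathbf{u}_p,p_p)-b_p(\mathbf{u}_p,p_p)=0$, and the two $\alpha b_p(\boldsymbol{\xi}_{p,h}^{n+1},p_{p,h}^{n+1})$ terms cancel.
\item By the identity displayed after the stabilization, $a_d+s_D$ gives $\frac{\mu_f}{2}\|\mathbf K^{-1/2}\mathbf u_{p,h}^{n+1}\|^2+\frac1{2\mu_f}\|\mathbf K^{1/2}\nabla p_{p,h}^{n+1}\|^2$.
\item Since $\boldsymbol{\xi}_{p,h}^{n+1}=d_t\boldsymbol{\eta}_{p,h}^{n+1}$, we have $a_p(\boldsymbol\eta^{n+1},\boldsymbol\xi^{n+1})=\frac1{\Delta t}a_p(\boldsymbol\eta^{n+1},\boldsymbol\eta^{n+1}-\boldsymbol\eta^n)$.
\item \eqref{eq:first_order_algebraic_identity} is applied to this term, to $\rho_f(d_t\mathbf u,\mathbf u)$, to $\rho_p(d_t\boldsymbol\xi,\boldsymbol\xi)$ and to $c_0(d_tp,p)$.
\end{itemize}
After multiplying by $\Delta t$, these produce $\mathcal E_h^{n+1}-\mathcal E_h^n+\Delta t\,\mathcal J_h^{n+1}$. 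The factors $\Delta t$ inside $\mathcal J$ come from rewriting $\|z^{n+1}-z^n\|^2=\Delta t^2\|d_tz^{n+1}\|^2$.

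\textbf{Normal interface terms.} Write $a=\mathbf u_{f,h}^{n+1}\cdot\mathbf n_f$ and $w=(\boldsymbol\xi_{p,h}^{n+1}+\mathbf u_{p,h}^{n+1})\cdot\mathbf n_p$. Moving all Robin terms to the left, the fluid and poroelastic contributions combine to
\[
L\|a\|^2+L\|w\|^2+2L\langle w,a\rangle_\Gamma-\langle\theta_h^n,a+w\rangle_\Gamma
=L\|a+w\|^2_{L^2(\Gamma)}-\langle\theta_h^n,a+w\rangle_\Gamma .
\]
Because $\theta_h^n\in\Sigma_h$, \eqref{eq:interface_projection} gives $\langle\theta_h^n,a\rangle_\Gamma=\langle\theta_h^n,\Pi_\Sigma a\rangle_\Gamma$. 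Taking $\psi_h=\theta_h^n$ in \eqref{eq:first_order_interface_update_algorithm} then yields $-\langle\theta_h^n,a+w\rangle_\Gamma=\frac1L\langle\theta_h^n,\theta_h^{n+1}-\theta_h^n\rangle_\Gamma$. By \eqref{eq:first_order_algebraic_identity} this equals
\[
\frac1{2L}\Big(\|\theta_h^{n+1}\|^2-\|\theta_h^n\|^2-\|\theta_h^{n+1}-\theta_h^n\|^2\Big).
\]

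\textbf{Main obstacle: absorbing $\|\theta_h^{n+1}-\theta_h^n\|^2$.} The trace $w$ is piecewise $\mathbb P_k$ on the matching interface mesh, so $w\in\Sigma_h$. The update therefore says $\theta_h^{n+1}-\theta_h^n=-L\,\Pi_\Sigma(a+w)$. Since $\Pi_\Sigma$ is an $L^2(\Gamma)$-contraction,
\[
\frac1{2L}\|\theta_h^{n+1}-\theta_h^n\|^2\le\frac L2\|a+w\|^2 .
\]
This leaves $\frac L2\|a+w\|^2$, which is exactly the normal part of $\mathcal D_h^{n+1}$. This is the only place where the projection matters, and it is why only half of $L\|a+w\|^2$ survives.

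\textbf{Tangential terms and summation.} For each $j$, the fluid BJS terms give $\gamma_j\big(\|u^{n+1}_\tau\|^2-\langle\xi^n_\tau,u^{n+1}_\tau\rangle\big)$, where $u_\tau=\mathbf u_{f,h}\cdot\boldsymbol\tau_j$ and $\xi_\tau=\boldsymbol\xi_{p,h}\cdot\boldsymbol\tau_j$. Writing $2x(x-y)=x^2-y^2+(x-y)^2$, this equals $\frac{\gamma_j}2\big(\|u^{n+1}_\tau\|^2-\|\xi^n_\tau\|^2+\|u^{n+1}_\tau-\xi^n_\tau\|^2\big)$. Symmetrically, the poroelastic BJS terms give $\frac{\gamma_j}2\big(\|\xi^{n+1}_\tau\|^2-\|u^n_\tau\|^2+\|\xi^{n+1}_\tau-u^n_\tau\|^2\big)$. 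Their sum is the increment of the BJS part of $\mathcal I_h$ plus the tangential part of $\mathcal D_h^{n+1}$.

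Collecting everything and multiplying by $\Delta t$ gives, for each $n$,
\[
\mathcal E_h^{n+1}-\mathcal E_h^n+\Delta t(\mathcal I_h^{n+1}-\mathcal I_h^n)+\Delta t(\mathcal J_h^{n+1}+\mathcal D_h^{n+1})\le0 .
\]
Summing over $n=0,\dots,\ell$ telescopes the $\mathcal E_h$ and $\mathcal I_h$ increments, which yields \eqref{eq:first_order_stability_estimate}.
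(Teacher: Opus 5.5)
Your proof is correct and follows essentially the same energy argument as the paper's Appendix~A. It uses the same test functions, the same bulk and BJS identities, and arrives at the same one-step inequality $d_t(\mathcal{E}_h^{n+1}+\Delta t\,\mathcal{I}_h^{n+1})+\mathcal{J}_h^{n+1}+\mathcal{D}_h^{n+1}\le 0$. The only variation is in handling $\theta_h$: you test the update with $\theta_h^n$ and bound $\|\theta_h^{n+1}-\theta_h^n\|\le L\|a+w\|$ by the contraction of $\Pi_\Sigma$, whereas the paper tests with $\theta_h^{n+1}$ and uses $\|\theta_h^{n+1}\|\le\|\theta_h^n-L(a+w)\|$ together with two polarization identities. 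Both routes discard the same nonnegative term $\tfrac{L}{2}\|(I-\Pi_\Sigma)(a+w)\|^2_{L^2(\Gamma)}$. Your claim $w\in\Sigma_h$ is not needed, since $\Pi_\Sigma(\Pi_\Sigma a+w)=\Pi_\Sigma(a+w)$ by linearity and idempotence.
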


For the proof of Theorem \ref{thm:first_order_stability}, the interested reader can refer to Appendix A.

\begin{remark}
The interface update is understood through the $L^2(\Gamma)$-orthogonal projection $\Pi_\Sigma$. Its orthogonality contributes a nonnegative projection-dissipation term to the one-step energy balance. Hence, the mismatch between the trace of the $P_{k+1}$ fluid velocity and the $P_k$ interface space does not affect unconditional stability. If the quantity being projected belongs to $\Sigma_h$, the projection is exact and the energy inequality reduces to an equality.
\end{remark}

\subsection{A second-order Robin partitioned scheme}
\label{subsec:second_order_scheme}

We now construct a second-order time discretization by combining the BDF2 formula with second-order extrapolation of the transmitted interface data. The values at $t^1$ are computed using Algorithm~\ref{alg:first_order_scheme}. Hence, the discrete functions
$$
\mathbf{u}_{f,h}^0,\ \mathbf{u}_{f,h}^1,\ 
\boldsymbol{\eta}_{p,h}^0,\ \boldsymbol{\eta}_{p,h}^1,\ 
\boldsymbol{\xi}_{p,h}^0,\ \boldsymbol{\xi}_{p,h}^1,\ 
\mathbf{u}_{p,h}^0,\ \mathbf{u}_{p,h}^1,\ 
p_{p,h}^0,\ p_{p,h}^1,\ 
\theta_h^0,\ \theta_h^1
$$
are assumed to be available.

\begin{algorithm}[H]
\scriptsize
\caption{Second-order Robin partitioned scheme}
\label{alg:second_order_scheme}
For $n=1,\ldots,N-1$, perform the following steps.
\begin{enumerate}
\item Find
$(\boldsymbol{\eta}_{p,h}^{n+1},\boldsymbol{\xi}_{p,h}^{n+1},
\mathbf{u}_{p,h}^{n+1},p_{p,h}^{n+1})
\in\mathbf{V}_{p,h}\times\mathbf{V}_{p,h}\times
\mathbf{X}_{p,h}\times\widehat{Q}_{p,h}
$
with $\boldsymbol{\xi}_{p,h}^{n+1}=d_t^{(2)}\boldsymbol{\eta}_{p,h}^{n+1}$ such that, for every
$(\boldsymbol{\omega}_{p,h},
\mathbf{v}_{p,h},q_{p,h})
\in\mathbf{V}_{p,h}\times
\mathbf{X}_{p,h}\times\widehat{Q}_{p,h}$,
\begin{align}
&\mathcal{A}_{p,h,\mathrm{stab}}^{d_t^{(2)}}
\big((\boldsymbol{\eta}_{p,h}^{n+1},\boldsymbol{\xi}_{p,h}^{n+1},
\mathbf{u}_{p,h}^{n+1},p_{p,h}^{n+1}),
(\boldsymbol{\omega}_{p,h},\mathbf{0},
\mathbf{v}_{p,h},q_{p,h})\big)
\nonumber\\
&\quad+\mathcal{R}_p
\big((\boldsymbol{\xi}_{p,h}^{n+1},\mathbf{u}_{p,h}^{n+1});
(\boldsymbol{\omega}_{p,h},\mathbf{v}_{p,h})\big)
\nonumber\\
&=\mathcal{F}_{p,h,\mathrm{stab}}^{n+1}
(\boldsymbol{\omega}_{p,h},\mathbf{v}_{p,h},q_{p,h})
+\left\langle
2\theta_h^n-\theta_h^{n-1},
(\boldsymbol{\omega}_{p,h}+\mathbf{v}_{p,h})\cdot\mathbf{n}_p
\right\rangle_\Gamma
\nonumber\\
&\quad-a_{\mathrm{BJS}}
(2\mathbf{u}_{f,h}^{n}-\mathbf{u}_{f,h}^{n-1},\mathbf{0};
\mathbf{0},\boldsymbol{\omega}_{p,h}).
\label{eq:second_order_poroelastic_problem}
\end{align}

\item Find $(\mathbf{u}_{f,h}^{n+1},p_{f,h}^{n+1})
\in\mathbf{V}_{f,h}\times Q_{f,h}$ such that, for every
$(\mathbf{v}_{f,h},q_{f,h})\in\mathbf{V}_{f,h}\times Q_{f,h}$,
\begin{align}
&\mathcal{A}_{f,h}^{d_t^{(2)}}
\big((\mathbf{u}_{f,h}^{n+1},p_{f,h}^{n+1}),
(\mathbf{v}_{f,h},q_{f,h})\big)
+\mathcal{R}_f(\mathbf{u}_{f,h}^{n+1},\mathbf{v}_{f,h})
\nonumber\\
&=\mathcal{F}_{f,h}^{n+1}(\mathbf{v}_{f,h},q_{f,h})
+\left\langle
2\theta_h^n-\theta_h^{n-1}
-2L(\boldsymbol{\xi}_{p,h}^{n+1}
+\mathbf{u}_{p,h}^{n+1})\cdot\mathbf{n}_p,
\mathbf{v}_{f,h}\cdot\mathbf{n}_f
\right\rangle_\Gamma
\nonumber\\
&\quad-a_{\mathrm{BJS}}
(\mathbf{0},\boldsymbol{\xi}_{p,h}^{n+1};
\mathbf{v}_{f,h},\mathbf{0}).
\label{eq:second_order_fluid_problem}
\end{align}

\item Update $\theta_h^{n+1}\in\Sigma_h$ according to
\begin{align}
\left\langle\theta_h^{n+1},\psi_h\right\rangle_\Gamma
&=\left\langle2\theta_h^n-\theta_h^{n-1},\psi_h\right\rangle_\Gamma-L\left\langle
\Pi_{\Sigma}\mathbf{u}_{f,h}^{n+1}\cdot\mathbf{n}_f
+(\boldsymbol{\xi}_{p,h}^{n+1}
+\mathbf{u}_{p,h}^{n+1})\cdot\mathbf{n}_p,
\psi_h
\right\rangle_\Gamma
\label{eq:second_order_interface_update_algorithm}
\end{align}
for every $\psi_h\in\Sigma_h$.
\end{enumerate}
\end{algorithm}

The poroelastic subproblem is first solved using the extrapolated fluid tangential velocity and the extrapolated interface datum. Its normal flux and current structure velocity are then used in the fluid subproblem, after which the interface variable is updated. Thus, Algorithm~\ref{alg:second_order_scheme} remains a sequential Robin partitioned method.

\paragraph{Stability analysis}
We firstly introduce the BDF2 identity
\begin{equation}
2(3a-4b+c,a)
=\|a\|^2-\|b\|^2
+\|2a-b\|^2-\|2b-c\|^2
+\|a-2b+c\|^2,
\label{eq:bdf2_energy_identity}
\end{equation}
where $(\cdot,\cdot)$ and $\|\cdot\|$ denote an inner product and its induced norm. Define the BDF2 discrete energy by
\begin{align*}
\widetilde{\mathcal{E}}_h^{n+1}=\widetilde{\mathcal{E}}_{fluid}^{n+1}+\widetilde{\mathcal{E}}_{poro}^{n+1},
\end{align*}
where
\begin{align*}
\widetilde{\mathcal{E}}_{fluid}^{n+1}
&=\frac{\rho_f}{4}
\left(
\|\mathbf{u}_{f,h}^{n+1}\|_{L^2(\Omega_f)}^2
+\|2\mathbf{u}_{f,h}^{n+1}-\mathbf{u}_{f,h}^{n}\|_{L^2(\Omega_f)}^2
\right),\nonumber\\
\widetilde{\mathcal{E}}_{poro}^{n+1}&=\frac{\rho_p}{4}
\|\boldsymbol{\xi}_{p,h}^{n+1}\|_{L^2(\Omega_p)}^2
+\frac{\mu_p}{2}\|\boldsymbol{\varepsilon}(\boldsymbol{\eta}_{p,h}^{n+1})\|_{L^2(\Omega_p)}^2+\frac{\lambda_p}{4}\|\nabla\cdot\boldsymbol{\eta}_{p,h}^{n+1}\|_{L^2(\Omega_p)}^2+\frac{c_0}{4}\|p_{p,h}^{n+1}\|_{L^2(\Omega_p)}^2
\\
&\quad+\frac{\rho_p}{4}\|2\boldsymbol{\xi}_{p,h}^{n+1}
-\boldsymbol{\xi}_{p,h}^{n}\|_{L^2(\Omega_p)}^2+\frac{\mu_p}{2}\|2\boldsymbol{\varepsilon}(\boldsymbol{\eta}_{p,h}^{n+1})
-\boldsymbol{\varepsilon}(\boldsymbol{\eta}_{p,h}^{n})\|_{L^2(\Omega_p)}^2\\
&\quad+\frac{\lambda_p}{4}\|2\nabla\cdot\boldsymbol{\eta}_{p,h}^{n+1}
-\nabla\cdot\boldsymbol{\eta}_{p,h}^{n}\|_{L^2(\Omega_p)}^2+\frac{c_0}{4}\|2p_{p,h}^{n+1}-p_{p,h}^{n}\|_{L^2(\Omega_p)}^2.
\end{align*}
The corresponding bulk dissipation is
\begin{align*}
\widetilde{\mathcal{J}}_{h}^{n+1}&=\widetilde{\mathcal{J}}_{fluid}^{n+1}+\widetilde{\mathcal{J}}_{poro}^{n+1},
\end{align*}
where
\begin{align*}
\widetilde{\mathcal{J}}_{fluid}^{n+1}
&=2\mu_f
\|\boldsymbol{\varepsilon}(\mathbf{u}_{f,h}^{n+1})\|_{L^2(\Omega_f)}^2
+\frac{\rho_f}{4\Delta t}
\|\mathbf{u}_{f,h}^{n+1}-2\mathbf{u}_{f,h}^{n}+\mathbf{u}_{f,h}^{n-1}\|_{L^2(\Omega_f)}^2,
\nonumber\\
\widetilde{\mathcal{J}}_{poro}^{n+1}&=\frac{\rho_p}{4\Delta t}
\|\boldsymbol{\xi}_{p,h}^{n+1}-2\boldsymbol{\xi}_{p,h}^{n}+\boldsymbol{\xi}_{p,h}^{n-1}\|_{L^2(\Omega_p)}^2+\frac{\mu_p}{2\Delta t}
\|\boldsymbol{\varepsilon}(\boldsymbol{\eta}_{p,h}^{n+1}-2\boldsymbol{\eta}_{p,h}^{n}+\boldsymbol{\eta}_{p,h}^{n-1})\|_{L^2(\Omega_p)}^2
\nonumber\\
&\quad+\frac{\lambda_p}{4\Delta t}
\|\nabla\cdot(\boldsymbol{\eta}_{p,h}^{n+1}-2\boldsymbol{\eta}_{p,h}^{n}+\boldsymbol{\eta}_{p,h}^{n-1})\|_{L^2(\Omega_p)}^2+\frac{c_0}{4\Delta t}
\|p_{p,h}^{n+1}-2p_{p,h}^{n}+p_{p,h}^{n-1}\|_{L^2(\Omega_p)}^2\nonumber\\
&\quad+\frac{\mu_f}{2}
\|\mathbf{K}^{-1/2}\mathbf{u}_{p,h}^{n+1}\|_{L^2(\Omega_p)}^2
+\frac{1}{2\mu_f}
\|\mathbf{K}^{1/2}\nabla p_{p,h}^{n+1}\|_{L^2(\Omega_p)}^2.
\end{align*}
For the final estimate, we also introduce
\begin{align*}
\widehat{\mathcal{J}}_h^{n+1}
&=\widetilde{\mathcal{J}}_h^{n+1}
-\mu_f\|\boldsymbol{\varepsilon}(\mathbf{u}_{f,h}^{n+1})\|_{L^2(\Omega_f)}^2-\frac{1}{4\mu_f}
\|\mathbf{K}^{1/2}\nabla p_{p,h}^{n+1}\|_{L^2(\Omega_p)}^2.
\end{align*}
Clearly, $\widehat{\mathcal{J}}_h^{n+1}\geq0$.

As in the first-order analysis, we consider homogeneous forcing and source terms:
$\mathbf{f}_f=\mathbf{f}_p=\mathbf{f}_d=\mathbf{0},~\phi_{f}=\phi_{d}=0$.

\begin{theorem}[Stability of the second-order scheme]
\label{thm:second_order_stability}
Let $(\mathbf{u}_{f,h}^{n+1},p_{f,h}^{n+1},\boldsymbol{\xi}_{p,h}^{n+1},\boldsymbol{\eta}_{p,h}^{n+1},\\
\mathbf{u}_{p,h}^{n+1},p_{p,h}^{n+1},\theta_{h}^{n+1})$ be defined by Algorithm~\ref{alg:second_order_scheme}. Then, there exists a constant $L_0>0$, independent of $h$ and $\Delta t$, such that, for every $L\geq L_0$, the solution generated by Algorithm~\ref{alg:second_order_scheme} satisfies, for $1\leq\ell\leq N-1$,
\begin{align}
&\widetilde{\mathcal{E}}_h^{\ell+1}
+\frac{\Delta t^{3}}{2L}
\|d_{t}\theta_h^{\ell+1}\|_{L^2(\Gamma)}^2
+\Delta t\sum_{n=1}^{\ell}
\Big(
\widehat{\mathcal{J}}_h^{n+1}
+\frac{1}{2}
\sum_{j=1}^{d-1}\gamma_{\mathrm{BJS},j}
\|\mathbf{u}_{f,h}^{n+1}\cdot\boldsymbol{\tau}_j\|_{L^2(\Gamma)}^2
\Big)
\nonumber\\
&\leq\exp\!\left(C(1+L^2)T\right)
\Big(
\widetilde{\mathcal{E}}_h^1
+\frac{\Delta t^{3}}{2L}\|d_{t}\theta_h^1\|_{L^2(\Gamma)}^2+\Delta t\mu_f\|\boldsymbol{\varepsilon}(\mathbf{u}_{f,h}^{1})\|_{L^2(\Omega_f)}^2
\nonumber\\
&\quad+\Delta t\mu_f\|\boldsymbol{\varepsilon}(\mathbf{u}_{f,h}^{0})\|_{L^2(\Omega_f)}^2\Big),
\label{eq:second_order_stability_estimate}
\end{align}
where $C>0$ is independent of $h$, $\Delta t$, and $N$. Consequently, the second-order scheme is stable without a CFL-type condition.
\end{theorem}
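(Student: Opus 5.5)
We test the poroelastic subproblem \eqref{eq:second_order_poroelastic_problem} with $(\boldsymbol{\omega}_{p,h},\mathbf{v}_{p,h},q_{p,h})=(\boldsymbol{\xi}_{p,h}^{n+1},\mathbf{u}_{p,h}^{n+1},p_{p,h}^{n+1})$ and the fluid subproblem \eqref{eq:second_order_fluid_problem} with $(\mathbf{v}_{f,h},q_{f,h})=(\mathbf{u}_{f,h}^{n+1},p_{f,h}^{n+1})$, multiply by $2\Delta t$, and add. The BDF2 identity \eqref{eq:bdf2_energy_identity} applies to the inertial terms. Since $\boldsymbol{\xi}_{p,h}^{n+1}=d_t^{(2)}\boldsymbol{\eta}_{p,h}^{n+1}$, it also applies to the elastic terms $a_p(\boldsymbol{\eta}_{p,h}^{n+1},\boldsymbol{\xi}_{p,h}^{n+1})$ and to $c_0(d_t^{(2)}p_{p,h}^{n+1},p_{p,h}^{n+1})$. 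This produces the telescoping differences $\widetilde{\mathcal{E}}_h^{n+1}-\widetilde{\mathcal{E}}_h^{n}$ together with the second-difference dissipation in $\widetilde{\mathcal{J}}_h^{n+1}$. The $\alpha b_p$ and $b_p$ couplings cancel by skew-symmetry. The Darcy block with $s_D$ yields $\frac{\mu_f}{2}\|\mathbf{K}^{-1/2}\mathbf{u}_{p,h}^{n+1}\|^2+\frac{1}{2\mu_f}\|\mathbf{K}^{1/2}\nabla p_{p,h}^{n+1}\|^2$, exactly as in Subsection~\ref{subsec:robin_split_weak_form}. What remains is the interface contribution.

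For the normal terms, write $\widehat\theta^{n+1}=2\theta_h^n-\theta_h^{n-1}$, $a=\Pi_\Sigma\mathbf{u}_{f,h}^{n+1}\cdot\mathbf{n}_f$, and $b=(\boldsymbol{\xi}_{p,h}^{n+1}+\mathbf{u}_{p,h}^{n+1})\cdot\mathbf{n}_p$. The normal interface terms collect to
\[
L\|\mathbf{u}_{f,h}^{n+1}\cdot\mathbf{n}_f\|^2+L\|b\|^2+2L\langle b,\mathbf{u}_{f,h}^{n+1}\cdot\mathbf{n}_f\rangle-\langle\widehat\theta^{n+1},\mathbf{u}_{f,h}^{n+1}\cdot\mathbf{n}_f+b\rangle .
\]
By the projection property \eqref{eq:interface_projection}, the $\theta$-pairing with $\mathbf{u}_{f,h}^{n+1}\cdot\mathbf{n}_f$ may be replaced by a pairing with $a$. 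The update \eqref{eq:second_order_interface_update_algorithm} then gives $a+\Pi_\Sigma b=(\widehat\theta^{n+1}-\theta_h^{n+1})/L$. With the polarization identity this becomes
\[
-\langle\widehat\theta^{n+1},\widehat\theta^{n+1}-\theta_h^{n+1}\rangle/L=\frac{1}{2L}\Big(\|\theta_h^{n+1}\|^2-\|\widehat\theta^{n+1}\|^2-\|\widehat\theta^{n+1}-\theta_h^{n+1}\|^2\Big),
\]
plus a nonnegative projection defect. Now $\widehat\theta^{n+1}-\theta_h^{n}=\theta_h^n-\theta_h^{n-1}$. Expanding $\|\widehat\theta^{n+1}\|^2$ therefore produces the discrete second-derivative structure. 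After multiplying by $\Delta t$, this should be rearranged into a telescoping quantity $\frac{\Delta t^3}{2L}\|d_t\theta_h^{n+1}\|^2$ plus remainders. Unlike backward Euler, the extrapolation $2\theta^n-\theta^{n-1}$ does not yield a signed $\|\theta\|^2$ telescope; it yields only a difference of $\|d_t\theta\|^2$ together with cross terms of size $\Delta t^2\langle d_t\theta_h^{n+1},d_t\theta_h^n\rangle$. My approach is to express $d_t\theta_h^{n+1}$ through the update as $d_t\theta_h^n-\frac{L}{\Delta t}(a+\Pi_\Sigma b)$. The bad cross terms are then bounded by $L\Delta t\|a+\Pi_\Sigma b\|\,\|d_t\theta_h^n\|$, and via discrete trace inequalities, Korn, and the dissipation terms, by $C(1+L^2)\Delta t\times(\text{energy})$. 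This is the source of the Gronwall factor $\exp(C(1+L^2)T)$.

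For the tangential BJS terms, the fluid side uses the implicit $\boldsymbol{\xi}_{p,h}^{n+1}$ and gives $\gamma\|(\mathbf{u}_{f,h}^{n+1}-\boldsymbol{\xi}_{p,h}^{n+1})\cdot\boldsymbol{\tau}\|^2$-type terms. The poroelastic side uses the extrapolation $2\mathbf{u}_{f,h}^n-\mathbf{u}_{f,h}^{n-1}$, so the defect $\gamma\langle(\mathbf{u}_{f,h}^{n+1}-2\mathbf{u}_{f,h}^n+\mathbf{u}_{f,h}^{n-1})\cdot\boldsymbol\tau,\boldsymbol{\xi}_{p,h}^{n+1}\cdot\boldsymbol\tau\rangle$ appears. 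I bound this with the trace inequality $\|v\|_{\Gamma}^2\le C\|v\|\,\|v\|_{H^1}$ and Young. Part of $\mu_f\|\boldsymbol\varepsilon(\mathbf{u}_{f,h})\|^2$ at levels $n+1,n,n-1$ absorbs it; this is exactly why $\widehat{\mathcal J}_h$ removes $\mu_f\|\boldsymbol\varepsilon(\mathbf{u}_{f,h}^{n+1})\|^2$, and why the initial terms $\Delta t\mu_f\|\boldsymbol\varepsilon(\mathbf{u}_{f,h}^{0,1})\|^2$ appear. The remaining part is bounded by $C\Delta t$ times $L^2$-energies. Similarly, the structure-velocity trace terms are controlled with $\|\boldsymbol\xi_{p,h}\|_\Gamma$ and Korn on $\boldsymbol\eta$, with a quarter of the $\nabla p$ dissipation used for the pressure trace. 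The condition $L\ge L_0$ is used so that the Robin terms $L\|b\|^2$ dominate the trace constants arising from the BJS and pressure cross terms.

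Summing from $n=1$ to $\ell$ then gives
\[
Y^{\ell+1}+\Delta t\sum\widehat{\mathcal J}\le Y^1+C(1+L^2)\Delta t\sum_{n\le\ell} Y^{n},
\qquad Y=\widetilde{\mathcal E}_h+\frac{\Delta t^3}{2L}\|d_t\theta_h\|^2 .
\]
The discrete Gronwall lemma applied to this yields \eqref{eq:second_order_stability_estimate}; the right-hand sum involves only past levels, so no step-size condition arises. The main obstacle is the $\theta$-extrapolation step: the indefinite cross term must be organized so that the residual is controlled by $L\|a+\Pi_\Sigma b\|$, and hence by bulk energies via trace inequalities, without an inverse estimate in $h$ or a CFL-type restriction.
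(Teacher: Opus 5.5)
\textbf{The gap is in the $\theta$-step.} Your expansion of $\|2\theta_h^n-\theta_h^{n-1}\|^2$ drops a term, and the term it drops is the hard one.

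Write $\delta^{k}=\theta_h^{k}-\theta_h^{k-1}$ and keep your $a,b$. The update gives $\delta^{n+1}-\delta^{n}=-L(a+\Pi_\Sigma b)$. Writing $\theta_h^{n+1}=\theta_h^n+\delta^{n+1}$ and $2\theta_h^n-\theta_h^{n-1}=\theta_h^n+\delta^n$ then yields the exact identity
\[
\frac{1}{2L}\Big(\|\theta_h^{n+1}\|^2-\|2\theta_h^n-\theta_h^{n-1}\|^2\Big)=\frac{1}{2L}\Big(\|\delta^{n+1}\|^2-\|\delta^{n}\|^2\Big)-\big\langle\theta_h^{n},\mathbf{u}_{f,h}^{n+1}\cdot\mathbf{n}_f+b\big\rangle_\Gamma .
\]
So the remainder is not a cross term $\Delta t^2\langle d_t\theta_h^{n+1},d_t\theta_h^n\rangle$. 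It is the pairing of the \emph{level} $\theta_h^n$ with the interface residual, which is exactly the term the paper leaves on the right of \eqref{eq:second_order_preliminary_energy}.

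Your $Y$ contains only $\frac{\Delta t}{2L}\|\delta\|^2$, which does not control $\|\theta_h^n\|$ uniformly in $\Delta t$. Summation by parts does not rescue this. It produces $+\frac{\Delta t}{L}\sum_n\|\delta^n\|^2$, which is twice the $\theta$-part of $\sum_n Y^n$ with no factor $\Delta t$. It also leaves a boundary term that still contains $\theta_h^\ell$.

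Even the cross-term bound you state does not close. Young gives $L\Delta t\|a+\Pi_\Sigma b\|\,\|d_t\theta_h^n\|\le\frac{L}{4}\|a+\Pi_\Sigma b\|^2+L\Delta t^2\|d_t\theta_h^n\|^2$. After multiplying by $\Delta t$, the last term equals $2L^2\cdot\frac{\Delta t^3}{2L}\|d_t\theta_h^n\|^2$. That is a per-step growth factor $1+2L^2$, not $1+C(1+L^2)\Delta t$. Bounding $\|a+\Pi_\Sigma b\|$ by bulk energies instead would need an inverse trace inequality for $\boldsymbol{\xi}_{p,h}$ and $\mathbf{u}_{p,h}$, since only their $L^2$ norms are controlled. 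That costs $h^{-1}$.

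\textbf{The missing idea is to control $\theta_h^n$ through bulk traces.} The paper proceeds as follows.
\begin{enumerate}
\item It bounds $\langle\theta_h^n,\mathbf{u}_{f,h}^{n+1}\cdot\mathbf{n}_f+b\rangle_\Gamma\le\frac{1}{2L}\|\theta_h^n\|^2+\frac{L}{2}\|\mathbf{u}_{f,h}^{n+1}\cdot\mathbf{n}_f+b\|^2$ and absorbs the second term into the normal-residual dissipation.
\item It uses the representation \eqref{eq:second_order_theta_representation}, $\theta_h^n=\Pi_\Sigma(-L\mathbf{u}_{f,h}^n\cdot\mathbf{n}_f-p_{p,h}^n)$.
\item Trace and Korn then give $CL^2\|\mathbf{u}_{f,h}^n\|^2+\frac{\mu_f}{2}\|\boldsymbol{\varepsilon}(\mathbf{u}_{f,h}^n)\|^2$.
\item Trace and Poincar\'e give $\frac{C}{L}\|\nabla p_{p,h}^n\|^2$. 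This is absorbed by $\frac{1}{4\mu_f}\|\mathbf{K}^{1/2}\nabla p_{p,h}^n\|^2$ once $L\ge L_0=4\mu_fC/k_{\min}$.
\end{enumerate}
This step is where $L_0$, the $L^2$ in the exponent, and the pieces removed in $\widehat{\mathcal{J}}_h$ come from. It has nothing to do with $L\|b\|^2$ dominating BJS trace constants. Your sentence about spending part of the $\nabla p$ dissipation on a pressure trace has no source in your argument, because no pressure trace appears without this representation.

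Be aware that the representation is exact only at $n=0$, by \eqref{eq:initial_discrete_theta}. For $n\ge1$ it is equivalent to $\Pi_\Sigma\big(L(\boldsymbol{\xi}_{p,h}^n+\mathbf{u}_{p,h}^n)\cdot\mathbf{n}_p-p_{p,h}^n\big)$ equalling the Robin datum of that step. Testing the corresponding poroelastic subproblem with $(\mathbf{0},\mathbf{v}_{p,h},0)$ gives only $\langle L(\boldsymbol{\xi}_{p,h}^n+\mathbf{u}_{p,h}^n)\cdot\mathbf{n}_p-p_{p,h}^n-(2\theta_h^{n-1}-\theta_h^{n-2}),\mathbf{v}_{p,h}\cdot\mathbf{n}_p\rangle_\Gamma=-\tfrac12(\mu_f\mathbf{K}^{-1}\mathbf{u}_{p,h}^n+\nabla p_{p,h}^n,\mathbf{v}_{p,h})_{\Omega_p}$, with $\theta_h^0$ as datum when $n=1$. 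So this step needs its own justification in any write-up.

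Finally, in your BJS argument, Korn on $\boldsymbol{\eta}$ says nothing about the trace of $\boldsymbol{\xi}_{p,h}$. Absorb the $\boldsymbol{\xi}$-traces into the left-hand terms $\gamma_{\mathrm{BJS},j}\|\boldsymbol{\xi}_{p,h}^{n+1}\cdot\boldsymbol{\tau}_j\|^2$, as the paper does.
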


\begin{proof}
Choosing
$
(\boldsymbol{\omega}_{p,h},
\mathbf{v}_{p,h},q_{p,h})
=
(\boldsymbol{\xi}_{p,h}^{n+1},
\mathbf{u}_{p,h}^{n+1},p_{p,h}^{n+1})
$
as the test functions in \eqref{eq:second_order_poroelastic_problem} and using
$\boldsymbol{\xi}_{p,h}^{n+1}
=d_t^{(2)}\boldsymbol{\eta}_{p,h}^{n+1}$
and the identity \eqref{eq:bdf2_energy_identity}, we obtain
\begin{align}
&d_t\widetilde{\mathcal{E}}_{poro}^{n+1}+\widetilde{\mathcal{J}}_{poro}^{n+1}+L
\|(\boldsymbol{\xi}_{p,h}^{n+1}
+\mathbf{u}_{p,h}^{n+1})\cdot\mathbf{n}_p\|_{L^2(\Gamma)}^2+\sum_{j=1}^{d-1}\gamma_{\mathrm{BJS},j}
\|\boldsymbol{\xi}_{p,h}^{n+1}\cdot\boldsymbol{\tau}_j\|_{L^2(\Gamma)}^2\nonumber\\
&=\left\langle
2\theta_h^n-\theta_h^{n-1},
(\boldsymbol{\xi}_{p,h}^{n+1}
+\mathbf{u}_{p,h}^{n+1})\cdot\mathbf{n}_p
\right\rangle_\Gamma\nonumber\\
&\quad+\sum_{j=1}^{d-1}\gamma_{\mathrm{BJS},j}
\left\langle
(2\mathbf{u}_{f,h}^{n}-\mathbf{u}_{f,h}^{n-1})
\cdot\boldsymbol{\tau}_j,
\boldsymbol{\xi}_{p,h}^{n+1}\cdot\boldsymbol{\tau}_j
\right\rangle_\Gamma.
\label{eq:second_order_poroelastic_energy}
\end{align}
Similarly, choosing
$(\mathbf{v}_{f,h},q_{f,h})
=(\mathbf{u}_{f,h}^{n+1},p_{f,h}^{n+1})$ as the test functions
in \eqref{eq:second_order_fluid_problem} gives
\begin{align}
&d_t\widetilde{\mathcal{E}}_{fluid}^{n+1}+\widetilde{\mathcal{J}}_{fluid}^{n+1}+L\|\mathbf{u}_{f,h}^{n+1}\cdot\mathbf{n}_f\|_{L^2(\Gamma)}^2
+\sum_{j=1}^{d-1}\gamma_{\mathrm{BJS},j}
\|\mathbf{u}_{f,h}^{n+1}\cdot\boldsymbol{\tau}_j\|_{L^2(\Gamma)}^2
\nonumber\\
&=
\left\langle
2\theta_h^n-\theta_h^{n-1}
-2L(\boldsymbol{\xi}_{p,h}^{n+1}
+\mathbf{u}_{p,h}^{n+1})\cdot\mathbf{n}_p,
\mathbf{u}_{f,h}^{n+1}\cdot\mathbf{n}_f
\right\rangle_\Gamma\nonumber\\
&\quad+
\sum_{j=1}^{d-1}\gamma_{\mathrm{BJS},j}
\left\langle
\boldsymbol{\xi}_{p,h}^{n+1}\cdot\boldsymbol{\tau}_j,
\mathbf{u}_{f,h}^{n+1}\cdot\boldsymbol{\tau}_j
\right\rangle_\Gamma.
\label{eq:second_order_fluid_energy}
\end{align}
Since $
2\theta_h^n-\theta_h^{n-1}
=\theta_h^n+(\theta_h^n-\theta_h^{n-1})$,
the interface update \eqref{eq:second_order_interface_update_algorithm} can be written as
\begin{equation}
\langle\theta_h^{n+1}-\theta_h^n,\psi_{h}\rangle_{\Gamma}
=
\langle
\theta_h^n-\theta_h^{n-1}
-L
\Pi_\Sigma\mathbf{u}_{f,h}^{n+1}\cdot\mathbf{n}_f
-L(\boldsymbol{\xi}_{p,h}^{n+1}
+\mathbf{u}_{p,h}^{n+1})\cdot\mathbf{n}_p
,\psi_{h}\rangle_{\Gamma}.
\label{eq:second_order_projected_update}
\end{equation}
Then the following inequality holds for \eqref{eq:second_order_projected_update}:
\begin{align}
&\|\theta_h^{n+1}-\theta_h^n\|_{L^2(\Gamma)}^2\nonumber\\
&\leq
\|
\theta_h^n-\theta_h^{n-1}
-L\mathbf{u}_{f,h}^{n+1}\cdot\mathbf{n}_f
-L(\boldsymbol{\xi}_{p,h}^{n+1}
+\mathbf{u}_{p,h}^{n+1})\cdot\mathbf{n}_p
\|_{L^2(\Gamma)}\|\theta_h^{n+1}-\theta_h^n\|_{L^2(\Gamma)}.
\label{eq:second_order_theta_projection_bound}
\end{align}
Using the polarization identity \eqref{eq:first_order_polarization_identity}, we further obtain
\begin{align}
&\langle
\theta_h^n-\theta_h^{n-1}
-\frac{L}{2}
(\boldsymbol{\xi}_{p,h}^{n+1}
+\mathbf{u}_{p,h}^{n+1})\cdot\mathbf{n}_p,
(\boldsymbol{\xi}_{p,h}^{n+1}
+\mathbf{u}_{p,h}^{n+1})\cdot\mathbf{n}_p
\rangle_\Gamma
\nonumber\\
&\qquad=
\frac{1}{2L}
\Big(
\|\theta_h^n-\theta_h^{n-1}\|_{L^2(\Gamma)}^2
-\|
\theta_h^n-\theta_h^{n-1}
-L(\boldsymbol{\xi}_{p,h}^{n+1}
+\mathbf{u}_{p,h}^{n+1})\cdot\mathbf{n}_p
\|_{L^2(\Gamma)}^2
\Big),
\label{eq:second_order_normal_identity_one}\\
&\langle
\theta_h^n-\theta_h^{n-1}
-L(\boldsymbol{\xi}_{p,h}^{n+1}
+\mathbf{u}_{p,h}^{n+1})\cdot\mathbf{n}_p
-\frac{L}{2}\mathbf{u}_{f,h}^{n+1}\cdot\mathbf{n}_f,
\mathbf{u}_{f,h}^{n+1}\cdot\mathbf{n}_f
\rangle_\Gamma
\nonumber\\
&\qquad=
\frac{1}{2L}
\Big(\|
\theta_h^n-\theta_h^{n-1}
-L(\boldsymbol{\xi}_{p,h}^{n+1}
+\mathbf{u}_{p,h}^{n+1})\cdot\mathbf{n}_p\|_{L^2(\Gamma)}^2
\nonumber\\
&\qquad\quad
-\|
\theta_h^n-\theta_h^{n-1}
-L\mathbf{u}_{f,h}^{n+1}\cdot\mathbf{n}_f
-L(\boldsymbol{\xi}_{p,h}^{n+1}
+\mathbf{u}_{p,h}^{n+1})\cdot\mathbf{n}_p
\|_{L^2(\Gamma)}^2
\Big).
\label{eq:second_order_normal_identity_two}
\end{align}
Moreover,
\begin{align}
&\frac{L}{2}
\|\mathbf{u}_{f,h}^{n+1}\cdot\mathbf{n}_f\|_{L^2(\Gamma)}^2
+\frac{L}{2}
\|(\boldsymbol{\xi}_{p,h}^{n+1}
+\mathbf{u}_{p,h}^{n+1})\cdot\mathbf{n}_p\|_{L^2(\Gamma)}^2\nonumber\\
&\quad+
L\langle
(\boldsymbol{\xi}_{p,h}^{n+1}
+\mathbf{u}_{p,h}^{n+1})\cdot\mathbf{n}_p,
\mathbf{u}_{f,h}^{n+1}\cdot\mathbf{n}_f
\rangle_\Gamma
\nonumber\\
&=
\frac{L}{2}
\|
\mathbf{u}_{f,h}^{n+1}\cdot\mathbf{n}_f
+(\boldsymbol{\xi}_{p,h}^{n+1}
+\mathbf{u}_{p,h}^{n+1})\cdot\mathbf{n}_p
\|_{L^2(\Gamma)}^2.
\label{eq:second_order_normal_residual_identity}
\end{align}

Adding \eqref{eq:second_order_poroelastic_energy} and
\eqref{eq:second_order_fluid_energy}, then applying
\eqref{eq:second_order_theta_projection_bound}--\eqref{eq:second_order_normal_residual_identity} and the summation operator $\Delta t\sum_{n=1}^{\ell}$ yields
\begin{align}
&
\widetilde{\mathcal{E}}_h^{\ell+1}
+\frac{\Delta t^{3}}{2L}
\|d_{t}\theta_h^{\ell+1}\|_{L^2(\Gamma)}^2
+\Delta t\sum_{n=1}^{\ell}\Big[\frac{L}{2}\|
\mathbf{u}_{f,h}^{n+1}\cdot\mathbf{n}_f
+(\boldsymbol{\xi}_{p,h}^{n+1}
+\mathbf{u}_{p,h}^{n+1})\cdot\mathbf{n}_p\|_{L^2(\Gamma)}^2\nonumber\\
&\quad+\Delta t\sum_{n=1}^{\ell}\Big[\widetilde{\mathcal{J}}_h^{n+1}+
\sum_{j=1}^{d-1}\gamma_{\mathrm{BJS},j}
\Big(
\|\mathbf{u}_{f,h}^{n+1}\cdot\boldsymbol{\tau}_j\|_{L^2(\Gamma)}^2
+\|\boldsymbol{\xi}_{p,h}^{n+1}\cdot\boldsymbol{\tau}_j\|_{L^2(\Gamma)}^2
\Big)\Big]
\nonumber\\
&\leq
\widetilde{\mathcal{E}}_h^{1}
+\frac{\Delta t^{3}}{2L}
\|d_{t}\theta_h^{1}\|_{L^2(\Gamma)}^2+\Delta t\sum_{n=1}^{\ell}\Big[\langle
\theta_h^n,
\mathbf{u}_{f,h}^{n+1}\cdot\mathbf{n}_f
+(\boldsymbol{\xi}_{p,h}^{n+1}
+\mathbf{u}_{p,h}^{n+1})\cdot\mathbf{n}_p\rangle_\Gamma\nonumber\\
&\quad+\sum_{j=1}^{d-1}\gamma_{\mathrm{BJS},j}\Big(\langle
(2\mathbf{u}_{f,h}^{n}-\mathbf{u}_{f,h}^{n-1})
\cdot\boldsymbol{\tau}_j,
\boldsymbol{\xi}_{p,h}^{n+1}\cdot\boldsymbol{\tau}_j\rangle_\Gamma+\langle
\boldsymbol{\xi}_{p,h}^{n+1}\cdot\boldsymbol{\tau}_j,
\mathbf{u}_{f,h}^{n+1}\cdot\boldsymbol{\tau}_j\rangle_\Gamma\Big)\Big].
\label{eq:second_order_preliminary_energy}
\end{align}
Next, we estimate the first BJS cross term on the right side of \eqref{eq:second_order_preliminary_energy} by the Cauchy-Schwarz, Trace, Korn, and Young inequalities as follows:
\begin{align}
&\Delta t\sum_{n=1}^{\ell}
\sum_{j=1}^{d-1}\gamma_{\mathrm{BJS},j}
\left\langle
(2\mathbf{u}_{f,h}^{n}-\mathbf{u}_{f,h}^{n-1})
\cdot\boldsymbol{\tau}_j,
\boldsymbol{\xi}_{p,h}^{n+1}\cdot\boldsymbol{\tau}_j
\right\rangle_\Gamma
\nonumber\\
&\leq
C\Delta t\sum_{n=0}^{\ell}
\|\mathbf{u}_{f,h}^{n}\|_{L^2(\Omega_f)}^2
+\frac{\mu_f\Delta t}{2}
\sum_{n=0}^{\ell}
\|\boldsymbol{\varepsilon}(\mathbf{u}_{f,h}^{n})\|_{L^2(\Omega_f)}^2\nonumber\\
&\quad+
\frac{\Delta t}{2}
\sum_{n=1}^{\ell}
\sum_{j=1}^{d-1}\gamma_{\mathrm{BJS},j}
\|\boldsymbol{\xi}_{p,h}^{n+1}\cdot\boldsymbol{\tau}_j\|_{L^2(\Gamma)}^2.
\label{eq:second_order_bjs_extrapolation_estimate}
\end{align}
And it is easy to check that the second BJS cross term on the right side of \eqref{eq:second_order_preliminary_energy} satisfies
\begin{align}
&\Delta t\sum_{n=1}^{\ell}
\sum_{j=1}^{d-1}\gamma_{\mathrm{BJS},j}
\langle
\boldsymbol{\xi}_{p,h}^{n+1}\cdot\boldsymbol{\tau}_j,
\mathbf{u}_{f,h}^{n+1}\cdot\boldsymbol{\tau}_j
\rangle_\Gamma\nonumber\\
&\leq
\frac{\Delta t}{2}
\sum_{n=1}^{\ell}
\sum_{j=1}^{d-1}\gamma_{\mathrm{BJS},j}
\Big(
\|\boldsymbol{\xi}_{p,h}^{n+1}\cdot\boldsymbol{\tau}_j\|_{L^2(\Gamma)}^2
+\|\mathbf{u}_{f,h}^{n+1}\cdot\boldsymbol{\tau}_j\|_{L^2(\Gamma)}^2
\Big).
\label{eq:second_order_bjs_current_estimate}
\end{align}

By the definition of the interface variable and the $L^2(\Gamma)$ stability of $\Pi_\Sigma$, we have
\begin{equation}
\theta_h^n
=\Pi_\Sigma\left(
-L\mathbf{u}_{f,h}^{n}\cdot\mathbf{n}_f-p_{p,h}^{n}
\right).
\label{eq:second_order_theta_representation}
\end{equation}
Then the trace, Poincar\'e, Korn, and Young inequalities immediately imply
\begin{align}
&\langle
\theta_h^n,
\mathbf{u}_{f,h}^{n+1}\cdot\mathbf{n}_f
+(\boldsymbol{\xi}_{p,h}^{n+1}
+\mathbf{u}_{p,h}^{n+1})\cdot\mathbf{n}_p\rangle_\Gamma
\nonumber\\
&\leq
CL^2\|\mathbf{u}_{f,h}^{n}\|_{L^2(\Omega_f)}^2
+\frac{\mu_f}{2}
\|\boldsymbol{\varepsilon}(\mathbf{u}_{f,h}^{n})\|_{L^2(\Omega_f)}^2
+\frac{C}{L}
\|\nabla p_{p,h}^{n}\|_{L^2(\Omega_p)}^2
\nonumber\\
&\quad+
\frac{L}{2}
\left\|
\mathbf{u}_{f,h}^{n+1}\cdot\mathbf{n}_f
+(\boldsymbol{\xi}_{p,h}^{n+1}
+\mathbf{u}_{p,h}^{n+1})\cdot\mathbf{n}_p
\right\|_{L^2(\Gamma)}^2.
\label{eq:second_order_theta_residual_estimate}
\end{align}

Choosing
$$
L\geq L_0:=\frac{4\mu_f C}{k_{\min}}
$$
allows the pressure-gradient contribution in
\eqref{eq:second_order_theta_residual_estimate} to be absorbed by the Darcy dissipation. Substituting
\eqref{eq:second_order_bjs_extrapolation_estimate}--\eqref{eq:second_order_theta_residual_estimate}
into the \eqref{eq:second_order_preliminary_energy}, we obtain
\begin{align}
&\widetilde{\mathcal{E}}_h^{\ell+1}
+\frac{\Delta t^{3}}{2L}
\|d_{t}\theta_h^{\ell+1}\|_{L^2(\Gamma)}^2+\Delta t\sum_{n=1}^{\ell}
\Big(
\widehat{\mathcal{J}}_h^{n+1}
+\frac{1}{2}
\sum_{j=1}^{d-1}\gamma_{\mathrm{BJS},j}
\|\mathbf{u}_{f,h}^{n+1}\cdot\boldsymbol{\tau}_j\|_{L^2(\Gamma)}^2
\Big)
\nonumber\\
&\leq
\widetilde{\mathcal{E}}_h^1
+\frac{\Delta t^{3}}{2L}\|d_{t}\theta_h^1\|_{L^2(\Gamma)}^2
+\mu_f\Delta t
\|\boldsymbol{\varepsilon}(\mathbf{u}_{f,h}^{1})\|_{L^2(\Omega_f)}^2+\mu_f\Delta t
\|\boldsymbol{\varepsilon}(\mathbf{u}_{f,h}^{0})\|_{L^2(\Omega_f)}^2\nonumber\\
&\quad+
C(1+L^2)\Delta t
\sum_{n=0}^{\ell}
\widetilde{\mathcal{E}}_h^{n}.
\label{eq:second_order_gronwall_form}
\end{align}
An application of the discrete Gr\"onwall inequality to
\eqref{eq:second_order_gronwall_form} gives
\eqref{eq:second_order_stability_estimate}. This completes the proof.
\end{proof}

\begin{remark}
To apply discrete Gr\"onwall inequality to \eqref{eq:second_order_gronwall_form}, we require the condition
\begin{align}
C(1+L^{2})\Delta t \le 1. \label{condition}
\end{align}
Therefore, for any sufficiently small $\Delta t$, one may choose $L=O(1/\sqrt{\Delta t})$ so that \eqref{condition} is satisfied. Together with the requirement $L\ge L_0$, this implies $L_0 \le L \lesssim 1/\sqrt{\Delta t}$, which provides a guideline for selecting $L$ in the numerical experiments.
\end{remark}

\section{Numerical experiments}
\label{sec:numerical_experiments}

This section verifies the accuracy and stability of the proposed first- and second-order Robin partitioned schemes. A manufactured solution examines temporal convergence and the influence of $L$, and a standard FPSI benchmark compares the partitioned and monolithic methods. Supplementary Material reports nonlinear moving-domain computations that illustrate applicability to more complex configurations.

The computations use FEniCS and FEniCSx~\cite{alnaes2015fenics,baratta2023dolfinx,logg2012automated} on conforming simplicial meshes. $[P_2]^{d}$--$P_1$ Taylor--Hood elements are used for the fluid variables, while continuous piecewise linear elements are used for the poroelastic displacement and velocity, Darcy velocity, and pore pressure. The consistent Darcy residual stabilization of Subsection~\ref{subsec:robin_split_weak_form} is used throughout, and the BDF2 start-up step is computed with the first-order method. Problem-specific parameters, meshes, time steps, and solvers are given below.

\subsection{Temporal convergence and influence of the Robin parameter}
\label{subsec:temporal_convergence}

\paragraph{Problem setting}
We first consider a manufactured solution to verify the temporal accuracy of the proposed first- and second-order Robin partitioned schemes and to investigate the influence of the Robin parameter. Let
$\Omega_f=(0,1)\times(0,1),~
\Omega_p=(0,1)\times(-1,0),~
\Gamma=(0,1)\times\{0\}.
\label{eq:manufactured_domains} $ The analytical solution is given by
\begin{align*}
\mathbf{u}_f
&=\pi\cos(\pi t)
\begin{pmatrix}
-3x+\cos y\\
y+1
\end{pmatrix},
&
p_f
&=e^t\sin(\pi x)\cos\left(\frac{\pi y}{2}\right)
+2\pi\cos(\pi t),\\
\boldsymbol{\eta}_p
&=\sin(\pi t)
\begin{pmatrix}
-3x+\cos y
\\
y+1
\end{pmatrix},
&
p_p
&=e^t\sin(\pi x)\cos\left(\frac{\pi y}{2}\right).
\end{align*}
It is easy to check that these analytical fields satisfy the interface conditions on $\Gamma$. The forcing terms, source functions, initial conditions, and exterior boundary data are obtained by substituting the analytical solution into the governing equations and the corresponding boundary conditions.

Moreover, the physical parameters are chosen as $
\rho_f=\mu_f=\rho_p=\mu_p=\lambda_p=c_0=\alpha=1,
~
\mathbf{K}=\mathbf{I},
~
\gamma_{\mathrm{BJS},j}=1$. And the final time is $T=0.1 \text{s}$. To reduce the influence of the spatial discretization error, the spatial mesh is fixed at $h=1/300$ for the first-order scheme and at $h=1/500$ for the second-order scheme. The time-step size is selected as $
\Delta t=0.1/N_t,~N_t=10,20,40,80$. Computations are performed for $L=0.01,~1,~100,~1000$.

\paragraph{Error measures}
Since this experiment is designed primarily to examine temporal convergence, we report the $L^2$ errors of four representative variables:
\begin{align*}
e_{\mathbf{u}_f}
&=\|\mathbf{u}_f(T)-\mathbf{u}_{f,h}^{N_t}\|_{L^2(\Omega_f)},
&
e_{p_f}
&=\|p_f(T)-p_{f,h}^{N_t}\|_{L^2(\Omega_f)},
\nonumber\\
e_{\boldsymbol{\eta}_p}
&=\|\boldsymbol{\eta}_p(T)-\boldsymbol{\eta}_{p,h}^{N_t}\|_{L^2(\Omega_p)},
&
e_{p_p}
&=\|p_p(T)-p_{p,h}^{N_t}\|_{L^2(\Omega_p)}.
\end{align*}
We also define the normal interface residual at the final time as
\begin{equation*}
\mathcal{R}_{\Gamma,h}^{N_t}
=
\|
\mathbf{u}_{f,h}^{N_t}\cdot\mathbf{n}_f
+
(
\boldsymbol{\xi}_{p,h}^{N_t}
+\mathbf{u}_{p,h}^{N_t}
)\cdot\mathbf{n}_p\|_{L^2(\Gamma)}.
\label{eq:final_interface_residual}
\end{equation*}
The errors of the poroelastic velocity and Darcy velocity, together with the $H^1$ errors of the fluid velocity and structure displacement, exhibit similar convergence behavior and are omitted from the main presentation for brevity.

\begin{figure}[!htbp]
\centering
\includegraphics[width=0.98\textwidth]{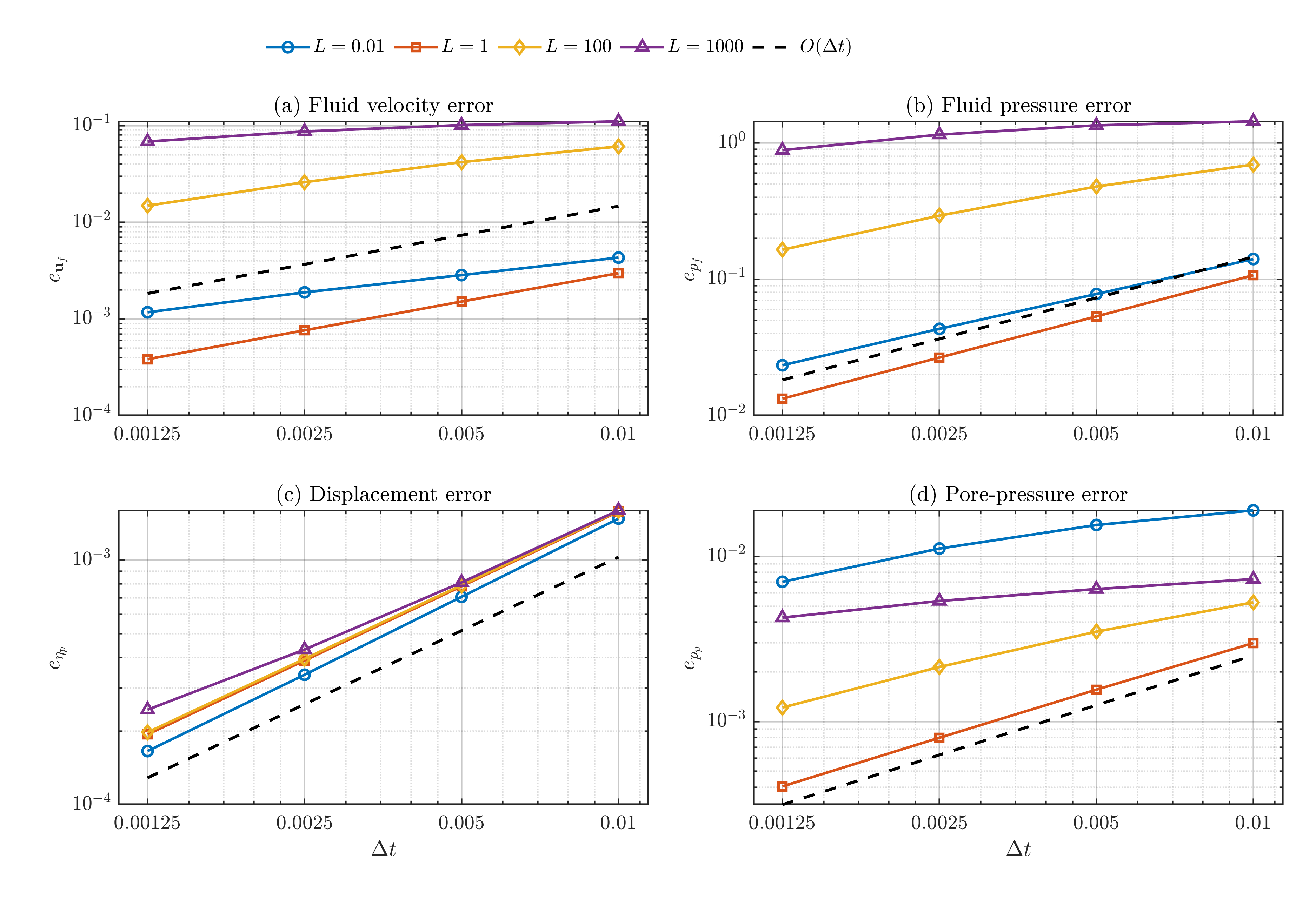}
\caption{Temporal errors of the first-order Robin partitioned scheme for different values of the Robin parameter $L$. The dashed lines indicate the reference rate $O(\Delta t)$.}
\label{fig:first_order_temporal_convergence}
\end{figure}

\paragraph{Temporal convergence of the primary variables}
Figures~\ref{fig:first_order_temporal_convergence} and \ref{fig:second_order_temporal_convergence} report the four errors against $\Delta t$; dashed lines indicate $O(\Delta t)$ and $O(\Delta t^2)$, respectively. The expected first-order behavior is clearest for $L=1$. The displacement error is comparatively insensitive to $L$, whereas the other errors show larger constants and occasional pre-asymptotic behavior for $L=0.01$ and $1000$.

\begin{figure}[!htbp]
\centering
\includegraphics[width=0.98\textwidth]{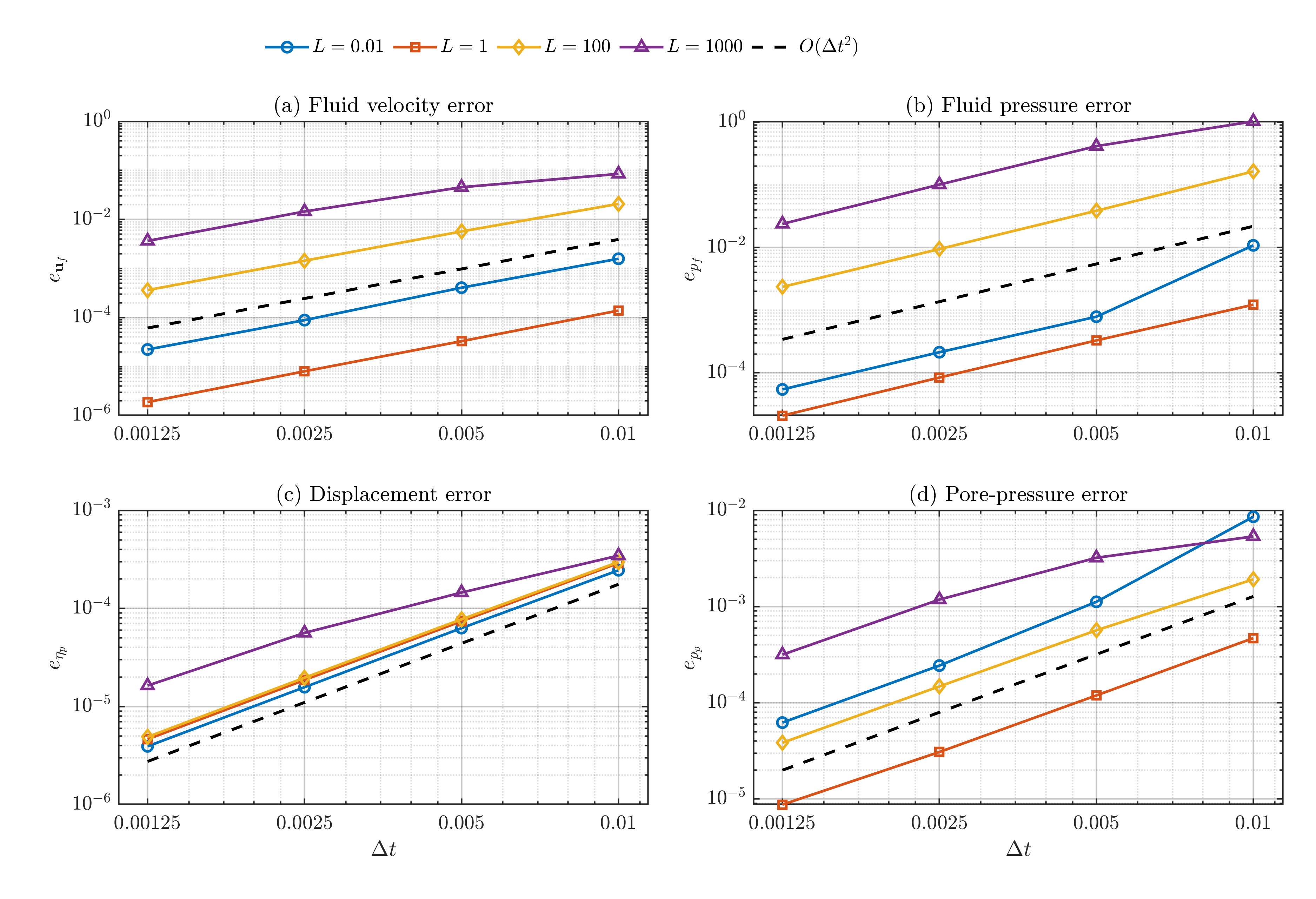}
\caption{Temporal errors of the second-order Robin partitioned scheme for different values of the Robin parameter $L$. The dashed lines indicate the reference rate $O(\Delta t^2)$.}
\label{fig:second_order_temporal_convergence}
\end{figure}

For the second-order method, the principal errors decay approximately quadratically for moderate $L$, especially for $L=1$ and $100$. Extreme values produce larger error constants, but the second-order trend becomes clear as $\Delta t$ decreases.

\paragraph{Convergence of the interface residual}
Figure~\ref{fig:interface_residual_convergence} shows the normal interface residual. For $L=1$, the first-order method has the expected linear decay, while the second-order method reduces the residual substantially faster and is close to the quadratic reference rate for $L=1$, $100$, and $1000$. The case $L=0.01$ is more sensitive on coarse time levels.

\begin{figure}[!htbp]
\centering
\includegraphics[width=0.98\textwidth]{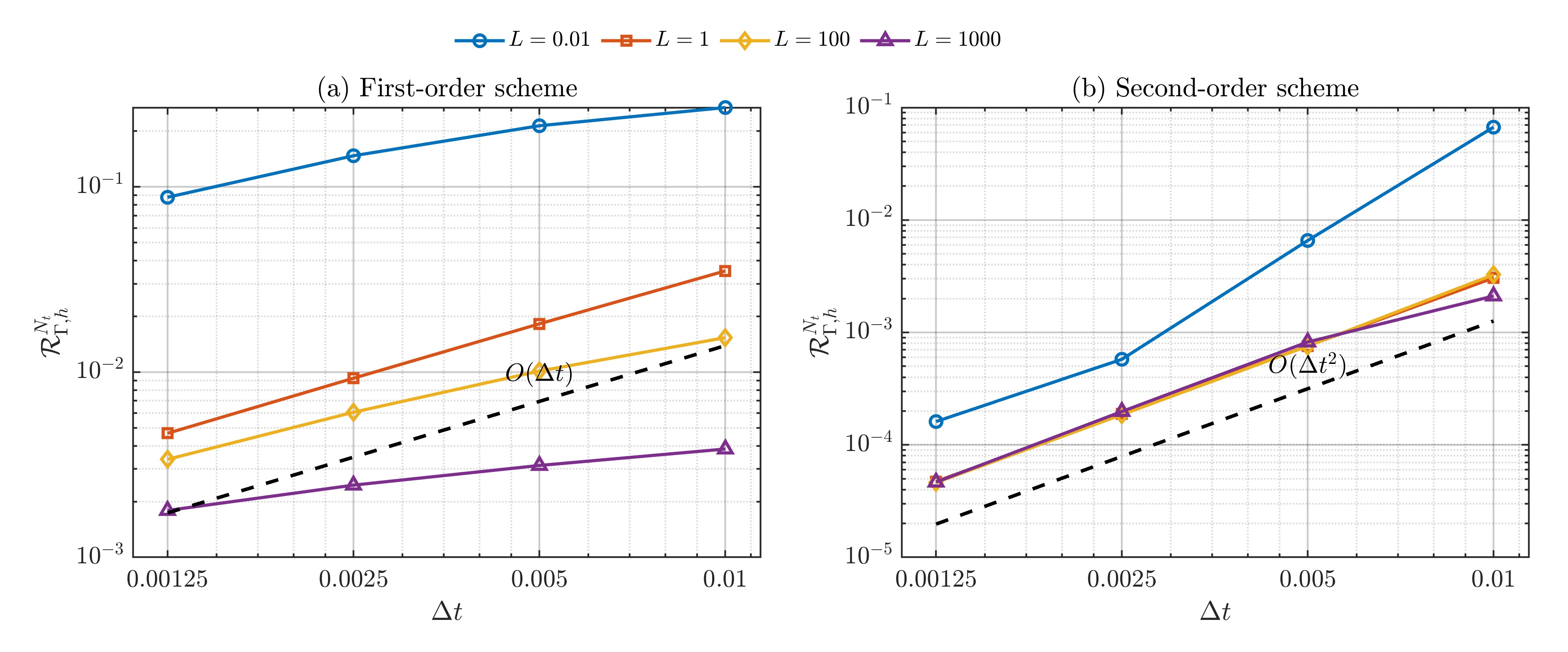}
\caption{Convergence of the normal interface residual for the first- and second-order Robin partitioned schemes. The dashed lines indicate the reference rates $O(\Delta t)$ and $O(\Delta t^2)$, respectively.}
\label{fig:interface_residual_convergence}
\end{figure}

\paragraph{Influence of the Robin parameter}
Table~\ref{tab:robin_parameter_comparison} reports the principal errors and interface residual at the finest time step, $N_t=80$, which confirms the trade-off in selecting $L$: a small $L$ increases the pore-pressure and interface errors, while a very large $L$ increases the fluid errors. A smaller interface residual therefore does not by itself imply a more accurate coupled solution. Among the tested values, $L=1$ gives the most balanced accuracy, and the second-order method is markedly more accurate at the same $N_t$.

\begin{table}[!htbp]
\centering
\caption{Errors and normal interface residual at $T=0.1$ with $N_t=80$ for different values of $L$.}
\label{tab:robin_parameter_comparison}
\small
\setlength{\tabcolsep}{3.8pt}
\resizebox{\textwidth}{!}{%
\begin{tabular}{c c c c c c c}
\hline
Scheme
& $L$
& $e_{\mathbf{u}_f}$
& $e_{p_f}$
& $e_{\boldsymbol{\eta}_p}$
& $e_{p_p}$
& $\mathcal{R}_{\Gamma,h}^{N_t}$
\\
\hline
First order
& $0.01$
& $1.1735\times10^{-3}$
& $2.3435\times10^{-2}$
& $1.6560\times10^{-4}$
& $7.0185\times10^{-3}$
& $8.7826\times10^{-2}$
\\
& $1$
& $3.8348\times10^{-4}$
& $1.3331\times10^{-2}$
& $1.9371\times10^{-4}$
& $4.0487\times10^{-4}$
& $4.6713\times10^{-3}$
\\
& $100$
& $1.4863\times10^{-2}$
& $1.6480\times10^{-1}$
& $1.9769\times10^{-4}$
& $1.2162\times10^{-3}$
& $3.3796\times10^{-3}$
\\
& $1000$
& $6.8544\times10^{-2}$
& $8.8567\times10^{-1}$
& $2.4448\times10^{-4}$
& $4.2592\times10^{-3}$
& $1.7907\times10^{-3}$
\\
\hline
Second order
& $0.01$
& $2.2324\times10^{-5}$
& $5.4321\times10^{-5}$
& $3.9310\times10^{-6}$
& $6.2258\times10^{-5}$
& $1.6112\times10^{-4}$
\\
& $1$
& $1.8902\times10^{-6}$
& $2.0699\times10^{-5}$
& $4.6428\times10^{-6}$
& $8.7369\times10^{-6}$
& $4.7237\times10^{-5}$
\\
& $100$
& $3.6274\times10^{-4}$
& $2.3510\times10^{-3}$
& $4.9042\times10^{-6}$
& $3.8443\times10^{-5}$
& $4.6195\times10^{-5}$
\\
& $1000$
& $3.6496\times10^{-3}$
& $2.3818\times10^{-2}$
& $1.6344\times10^{-5}$
& $3.1622\times10^{-4}$
& $4.6664\times10^{-5}$
\\
\hline
\end{tabular}%
}
\end{table}

\FloatBarrier

\FloatBarrier

\subsection{Pressure-wave benchmark and comparison with the monolithic scheme}
\label{subsec:pressure_wave_benchmark}

\paragraph{Geometry and model}
We consider the classical pressure-wave benchmark for an idealized arterial segment~\cite{he2026locking,parrow2026stability}. A time-dependent inlet pressure drives a downstream pulse, whose fluid traction deforms the poroelastic wall. By symmetry, only the upper channel and upper wall are modeled; see Figure~\ref{fig:pressure_wave_geometry}. The subdomains are $\Omega_f=(0,H)\times(0,R)$ and $\Omega_p=(0,H)\times(R,R+r_p)$, with $H=6~\mathrm{cm}$, $R=0.5~\mathrm{cm}$, and $r_p=0.1~\mathrm{cm}$. The interface is $\Gamma=(0,H)\times\{R\}$, the symmetry boundary is $\Gamma_f^{\mathrm{sym}}=(0,H)\times\{0\}$, the fluid inlet and outlet are $\Gamma_f^{\mathrm{in}}=\{0\}\times(0,R)$ and $\Gamma_f^{\mathrm{out}}=\{H\}\times(0,R)$, and the wall boundaries are $\Gamma_p^D=(\{0\}\cup\{H\})\times(R,R+r_p)$ and $\Gamma_p^{\mathrm{ext}}=(0,H)\times\{R+r_p\}$.

\begin{figure}[!htbp]
\centering
\includegraphics[width=0.94\textwidth]{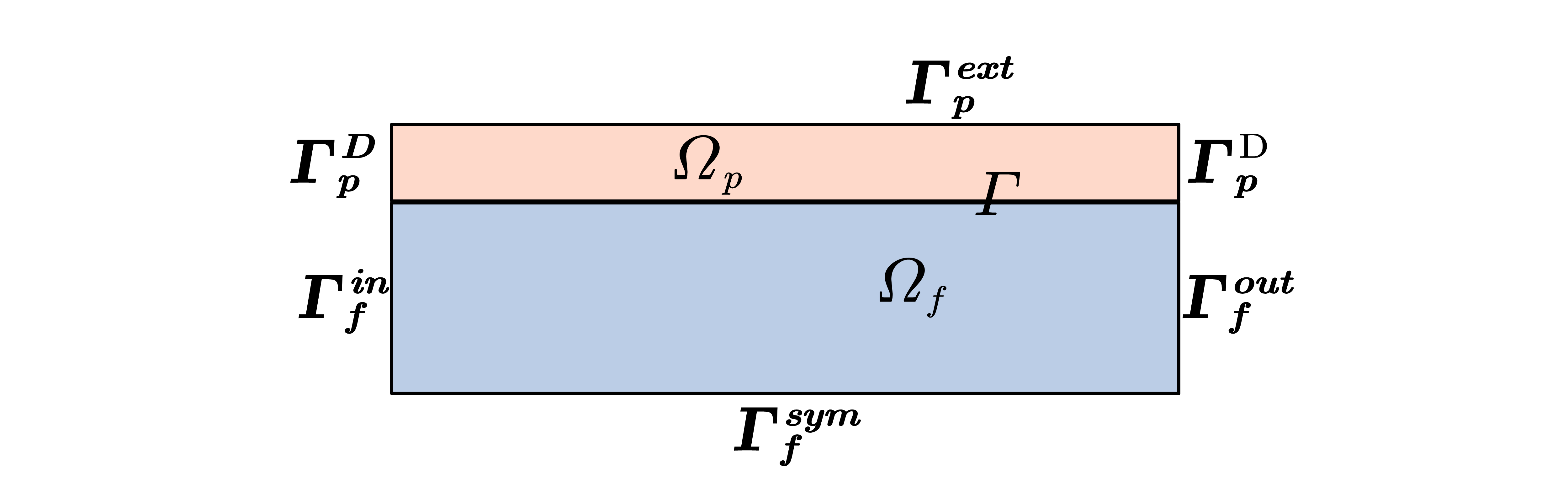}
\caption{Computational geometry and boundary conditions for the pressure-wave benchmark.}
\label{fig:pressure_wave_geometry}
\end{figure}

\FloatBarrier

To account for the circumferential recoil of the three-dimensional cylindrical wall in the two-dimensional formulation, the poroelastic momentum equation is augmented by a linear spring term:
\begin{align*}
\rho_p\partial_t\boldsymbol{\xi}_p
-\nabla\cdot\boldsymbol{\sigma}_p(\boldsymbol{\eta}_p,p_p)
+\beta\boldsymbol{\eta}_p
=\mathbf{f}_p
\qquad\text{in }\Omega_p\times(0,T].
\end{align*}
The coefficient $\beta>0$ represents the elastic recoil associated with the circumferential strain in the corresponding axisymmetric three-dimensional configuration. All the remaining fluid, Darcy, mass-conservation, and interface equations are the same as those introduced in Section~\ref{sec:fully_coupled_model}. The body forces, and volumetric source terms are set to zero, i.e. $\mathbf{f}_f=\mathbf{f}_p=\mathbf{f}_{d}=\mathbf{0},~
\phi_f=\phi_d=0$.

\paragraph{Initial and boundary conditions}
Initially, the system is at rest:
\begin{equation*}
\mathbf{u}_f(\cdot,0)=\mathbf{0},
\qquad
\boldsymbol{\eta}_p(\cdot,0)=\mathbf{0},
\qquad
\boldsymbol{\xi}_p(\cdot,0)=\mathbf{0},
\qquad
p_p(\cdot,0)=0.
\label{eq:benchmark_initial_conditions}
\end{equation*}
The final time is set to $T=0.014~\mathrm{s}$, and the numerical results are reported at the four observation times $t=0.0035,~ 0.007,~0.0105,~0.014~\mathrm{s}$,
which capture both the generation and the downstream propagation of the pressure pulse.

At the fluid inlet, a time-dependent pressure pulse is prescribed, whereas a homogeneous traction condition is imposed at the outlet:
\begin{align*}
\boldsymbol{\sigma}_f(\mathbf{u}_f,p_f)\mathbf{n}_f
&=-p_{\mathrm{in}}(t)\mathbf{n}_f
&&\text{on }\Gamma_f^{\mathrm{in}}\times(0,T],
\\
\boldsymbol{\sigma}_f(\mathbf{u}_f,p_f)\mathbf{n}_f
&=\mathbf{0}
&&\text{on }\Gamma_f^{\mathrm{out}}\times(0,T].
\end{align*}
The inlet pressure is given by
\begin{equation*}
p_{\mathrm{in}}(t)=
\begin{cases}
\dfrac{P_{\max}}{2}
\left[
1-\cos\left(\dfrac{2\pi t}{T_{\max}}\right)
\right],
&0\leq t\leq T_{\max},\\[1.2ex]
0,
&t>T_{\max},
\end{cases}
\end{equation*}
with $P_{\max}=13334~\mathrm{dyn}/\mathrm{cm}^2,~
T_{\max}=0.003~\mathrm{s}$.
For the poroelastic structure, clamping conditions are enforced at both ends:
\begin{equation*}
\boldsymbol{\eta}_p=\mathbf{0}
\qquad
\text{on }\Gamma_p^D\times(0,T].
\end{equation*}
The exterior boundaries of the porous medium are assumed to be impermeable:
\begin{equation*}
\mathbf{u}_p\cdot\mathbf{n}_p=0
\qquad
\text{on }
\left(
\Gamma_p^D\cup\Gamma_p^{\mathrm{ext}}
\right)\times(0,T].
\end{equation*}
On the lower fluid boundary, we impose symmetry conditions:
\begin{equation*}
\mathbf{u}_f\cdot\mathbf{n}_f=0,
\qquad
\boldsymbol{\sigma}_f(\mathbf{u}_f,p_f)\mathbf{n}_f
\cdot\boldsymbol{\tau}_f=0
\qquad\text{on }\Gamma_f^{\mathrm{sym}}\times(0,T].
\label{eq:benchmark_symmetry_conditions}
\end{equation*}

\paragraph{Physical and numerical parameters}
The physical parameters are summarized in Table~\ref{tab:pressure_wave_parameters}. The first-order Robin partitioned scheme is compared with the backward-Euler monolithic discretization, whereas the second-order Robin partitioned scheme is compared with the corresponding BDF2 monolithic discretization. The strongly coupled reference solution is computed using the
monolithic formulation of~\cite{cesmelioglu2017analysis}. The same physical parameters and temporal resolution are used in each pair of comparisons. Unless otherwise specified, $
\Delta t=10^{-4}~\mathrm{s},~L=500$.

\begin{table}[!htbp]
\centering
\caption{Physical parameters for the pressure-wave benchmark}
\label{tab:pressure_wave_parameters}
\small
\begin{tabular}{l c c c}
\hline
Parameter & Symbol & Unit & Value\\
\hline
Fluid density
& $\rho_f$
& $\mathrm{g}/\mathrm{cm}^3$
& $1.0$\\
Poroelastic wall density
& $\rho_p$
& $\mathrm{g}/\mathrm{cm}^3$
& $1.1$\\
Dynamic viscosity
& $\mu_f$
& $\mathrm{g}/(\mathrm{cm}\,\mathrm{s})$
& $0.035$\\
Spring coefficient
& $\beta$
& $\mathrm{dyn}/\mathrm{cm}^4$
& $4\times10^6$\\
Storage coefficient
& $c_0$
& $\mathrm{cm}^2/\mathrm{dyn}$
& $10^{-3}$\\
Permeability tensor
& $\mathbf{K}$
& $\mathrm{cm}^2$
& $10^{-6}\mathbf{I}$\\
Shear modulus
& $\mu_p$
& $\mathrm{dyn}/\mathrm{cm}^2$
& $5.575\times10^5$\\
First Lam\'e coefficient
& $\lambda_p$
& $\mathrm{dyn}/\mathrm{cm}^2$
& $1.7\times10^6$\\
Biot--Willis coefficient
& $\alpha$
& --
& $1$\\
BJS coefficient
& $\gamma_{\mathrm{BJS}}$
& --
& $1$\\
\hline
\end{tabular}
\end{table}

\FloatBarrier

\begin{figure}[!htbp]
\centering
\includegraphics[width=\textwidth,height=0.5\textwidth]
{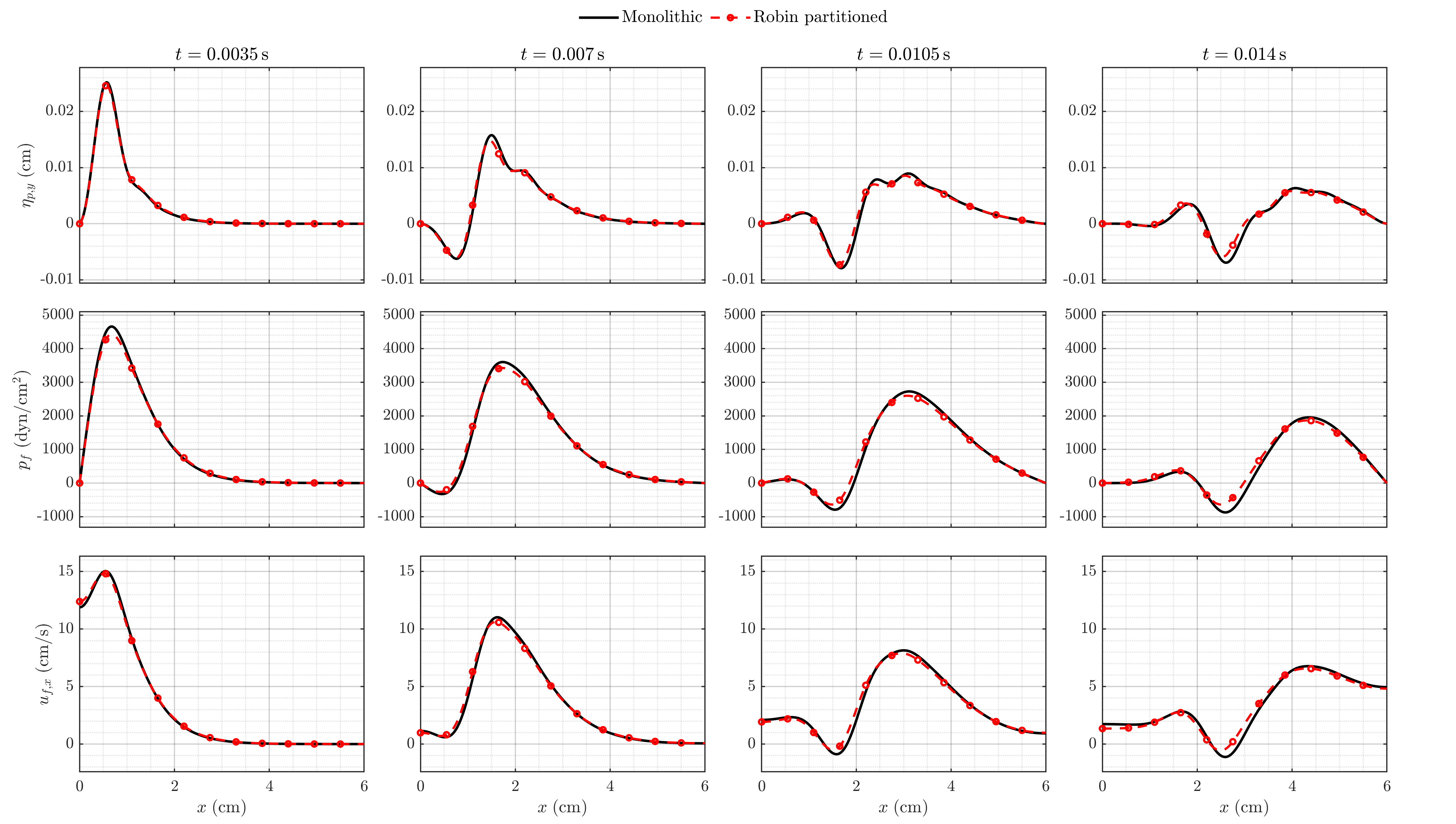}
\caption{Comparison of the first-order Robin partitioned scheme and the backward-Euler monolithic method for the pressure-wave benchmark. From top to bottom, the rows show the vertical interface displacement $\eta_{p,y}$, the fluid pressure $p_f$, and the axial fluid velocity $u_{f,x}$ at four representative time instants.}
\label{fig:benchmark_first_order_comparison}
\end{figure}
\FloatBarrier

\paragraph{Comparison with the monolithic schemes}
We compare the vertical interface displacement $\eta_{p,y}$, and the fluid pressure $p_f$ and axial velocity $u_{f,x}$ along the symmetry line at the four observation times.

Figure~\ref{fig:benchmark_first_order_comparison} shows close agreement between the first-order partitioned and backward-Euler monolithic profiles. The partitioned method reproduces the wave location, phase, and overall shape; only small differences near late-time extrema are visible.

\begin{figure}[!htbp]
\centering
\includegraphics[width=\textwidth,height=0.5\textwidth]{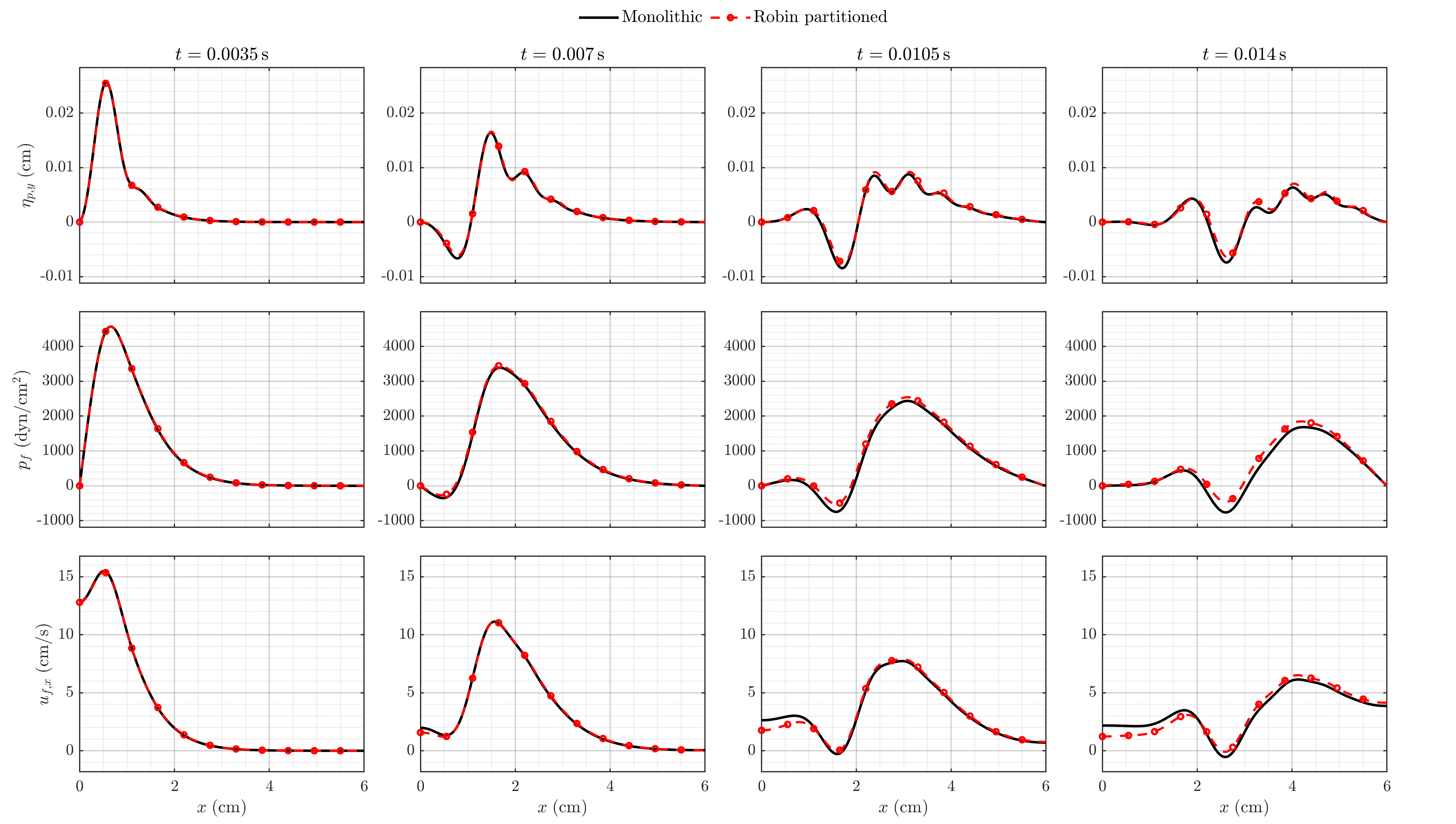}
\caption{Comparison of the second-order Robin partitioned scheme and the BDF2 monolithic method for the pressure-wave benchmark. From top to bottom, the rows show the vertical interface displacement $\eta_{p,y}$, the fluid pressure $p_f$, and the axial fluid velocity $u_{f,x}$ at four representative time instants.}
\label{fig:benchmark_second_order_comparison}
\end{figure}
\FloatBarrier
Figure~\ref{fig:benchmark_second_order_comparison} likewise shows close agreement with the BDF2 monolithic solution: the displacement features and the principal pressure and velocity peaks occur at essentially the same locations. At the latest time, the pulse is near the outlet, so local details may be affected by the outflow treatment. Overall, both partitioned schemes reproduce the principal pressure-wave dynamics without a simultaneous fully coupled solve.

\FloatBarrier

\section{Conclusions}

We developed decoupled first- and second-order Robin partitioned methods for FPSI problems. Backward Euler and BDF2 discretizations, together with explicit interface extrapolation and an auxiliary-variable update, yield noniterative schemes without direct evaluation of the interface stress. The discrete energy analysis proves unconditional stability; in particular, it gives an unconditional stability result for a second-order noniterative FPSI partitioned scheme. A rigorous error analysis of the second-order method remains open.

The numerical tests recover the expected temporal orders, clarify the influence of the Robin parameter, and show close agreement with monolithic solutions for the pressure-wave benchmark. Supplementary nonlinear moving-domain simulations further illustrate the applicability of the algorithms to more complex FPSI configurations.

\section*{Acknowledgements}
X. Yue and T. Wick were partially supported by the Deutsche Forschungsgemeinschaft (DFG, German Research Foundation) under Grant No.~548064929. J. Zhang was supported by the National Natural Science Foundation of China (NSFC) under Grant No.~12171376, the Fundamental Research Funds for the Central Universities under Grant No.~2042021kf0050, and WHU-2022-SYJS-0002. H. Zheng was partially supported by the NSFC under Grant No.~12471406 and the Science and Technology Commission of Shanghai Municipality under Grant No.~22DZ2229014.

\appendix
\setcounter{figure}{0}
\setcounter{table}{0}
\renewcommand{\thefigure}{S\arabic{figure}}
\renewcommand{\thetable}{S\arabic{table}}
\section{Proof of Theorem \ref{thm:first_order_stability}}
We first choose $(\boldsymbol{\omega}_{p,h},\mathbf{v}_{p,h},q_{p,h})=(\boldsymbol{\xi}_{p,h}^{n+1},\mathbf{u}_{p,h}^{n+1},p_{p,h}^{n+1})$ as the test functions in \eqref{eq:first_order_poroelastic_problem}. Then, applying the identity \eqref{eq:first_order_algebraic_identity} to the resulting equation, we obtain
\begin{align}
&d_{t}\mathcal{E}_{poro}^{n+1}+\mathcal{J}_{poro}^{n+1}+L\langle(\boldsymbol{\xi}_{p,h}^{n+1}+\mathbf{u}_{p,h}^{n+1})\cdot\mathbf{n}_{p},(\boldsymbol{\xi}_{p,h}^{n+1}+\mathbf{u}_{p,h}^{n+1})\cdot\mathbf{n}_{p}\rangle_{\Gamma}\nonumber\\
		&=\langle\theta_{h}^{n},(\boldsymbol{\xi}_{p,h}^{n+1}+\mathbf{u}_{p,h}^{n+1})\cdot\mathbf{n}_{p}\rangle_{\Gamma}
        -\sum_{j=1}^{d-1}\gamma_{\mathrm{BJS},j}
        \left\langle
        (\boldsymbol{\xi}_{p,h}^{n+1}-\mathbf{u}_{f,h}^{n})\cdot\boldsymbol{\tau}_{j},
        \boldsymbol{\xi}_{p,h}^{n+1}\cdot\boldsymbol{\tau}_{j}
        \right\rangle_{\Gamma}.\label{eq:first_order_poroelastic_energy}
\end{align}
Similarly, choosing $(\mathbf{v}_{f,h},q_{f,h})=(\mathbf{u}_{f,h}^{n+1},p_{f,h}^{n+1})$ as the test functions in \eqref{eq:first_order_fluid_problem} yields
\begin{align}
		&d_{t}\mathcal{E}_{fluid}^{n+1}+\mathcal{J}_{fluid}^{n+1}+L\|\mathbf{u}_{f,h}^{n+1}\cdot\mathbf{n}_{f}\|_{L^{2}(\Gamma)}^{2}\nonumber\\
      &=\langle\theta_{h}^{n}-2L(\boldsymbol{\xi}_{p,h}^{n+1}
        +\mathbf{u}_{p,h}^{n+1})\cdot\mathbf{n}_{p},\mathbf{u}_{f,h}^{n+1}\cdot\mathbf{n}_{f}\rangle_{\Gamma}\nonumber\\
        &\quad
        -\sum_{j=1}^{d-1}\gamma_{\mathrm{BJS},j}
        \left\langle
        (\mathbf{u}_{f,h}^{n+1}-\boldsymbol{\xi}_{p,h}^{n})\cdot\boldsymbol{\tau}_{j},
        \mathbf{u}_{f,h}^{n+1}\cdot\boldsymbol{\tau}_{j}
        \right\rangle_{\Gamma}.\label{eq:first_order_fluid_energy}
\end{align}
Next, choosing $\psi_{h}=\theta_{h}^{n+1}$ as the test function in \eqref{eq:first_order_interface_update_algorithm}, then using \eqref{eq:interface_projection} and Cauchy-Schwarz inequality for the resulting equation, we obtain
\begin{align}
\|\theta_{h}^{n+1}\|_{L^{2}(\Gamma)}^{2}&=\langle\theta_{h}^{n}-L(\boldsymbol{\xi}_{p,h}^{n+1}+\mathbf{u}_{p,h}^{n+1})\cdot\mathbf{n}_{p}-L\Pi_{\Sigma}\mathbf{u}_{f,h}^{n+1}\cdot\mathbf{n}_{f},\theta_{h}^{n+1}\rangle_{\Gamma}\nonumber\\
&=\langle\theta_{h}^{n}-L(\boldsymbol{\xi}_{p,h}^{n+1}+\mathbf{u}_{p,h}^{n+1})\cdot\mathbf{n}_{p}-L\mathbf{u}_{f,h}^{n+1}\cdot\mathbf{n}_{f},\theta_{h}^{n+1}\rangle_{\Gamma}\nonumber\\
&\leq\|\theta_{h}^{n}-L(\boldsymbol{\xi}_{p,h}^{n+1}+\mathbf{u}_{p,h}^{n+1})\cdot\mathbf{n}_{p}-L\mathbf{u}_{f,h}^{n+1}\cdot\mathbf{n}_{f}\|_{L^{2}(\Gamma)}\|\theta_{h}^{n+1}\|_{L^{2}(\Gamma)}.\label{eq:first_order_theta_norm_inequality}
\end{align}
Using identities \eqref{eq:first_order_algebraic_identity}-\eqref{eq:first_order_polarization_identity}, we obtain the following equalities:
\begin{align}
     &\langle\theta_{h}^{n}-\frac{L}{2}(\boldsymbol{\xi}_{p,h}^{n+1}+\mathbf{u}_{p,h}^{n+1})\cdot\mathbf{n}_{p},(\boldsymbol{\xi}_{p,h}^{n+1}+\mathbf{u}_{p,h}^{n+1})\cdot\mathbf{n}_{p}\rangle_{\Gamma}\nonumber\\
    &=\frac{1}{2L}\big(\|\theta_{h}^{n}\|_{L^{2}(\Gamma)}^{2}-\|\theta_{h}^{n}-L(\boldsymbol{\xi}_{p,h}^{n+1}+\mathbf{u}_{p,h}^{n+1})\cdot\mathbf{n}_{p}\|_{L^{2}(\Gamma)}^{2}\big),\label{eq:first_order_normal_identity_one}\\
    &\langle\theta_{h}^{n}-L(\boldsymbol{\xi}_{p,h}^{n+1}
        +\mathbf{u}_{p,h}^{n+1})\cdot\mathbf{n}_{p}-\frac{L}{2}\mathbf{u}_{f,h}^{n+1}\cdot\mathbf{n}_{f},\mathbf{u}_{f,h}^{n+1}\cdot\mathbf{n}_{f}\rangle_{\Gamma}\nonumber\\
        &=\frac{1}{2L}\big(\|\theta_{h}^{n}-L(\boldsymbol{\xi}_{p,h}^{n+1}
        +\mathbf{u}_{p,h}^{n+1})\cdot\mathbf{n}_{p}\|_{L^{2}(\Gamma)}^{2}\nonumber\\
        &\quad-\|\theta_{h}^{n}-L\mathbf{u}_{f,h}^{n+1}\cdot\mathbf{n}_{f}-L(\boldsymbol{\xi}_{p,h}^{n+1}
        +\mathbf{u}_{p,h}^{n+1})\cdot\mathbf{n}_{p}\|_{L^{2}(\Gamma)}^{2}\big),\label{eq:first_order_normal_identity_two}\\
    &\sum_{j=1}^{d-1}\gamma_{\mathrm{BJS},j}
    \left\langle
    (\boldsymbol{\xi}_{p,h}^{n+1}-\mathbf{u}_{f,h}^{n})\cdot\boldsymbol{\tau}_{j},
    \boldsymbol{\xi}_{p,h}^{n+1}\cdot\boldsymbol{\tau}_{j}
    \right\rangle_{\Gamma}\nonumber\\
	 	&=\frac{1}{2}\sum_{j=1}^{d-1}\gamma_{\mathrm{BJS},j}\big(\|\boldsymbol{\xi}_{p,h}^{n+1}\cdot\boldsymbol{\tau}_{j}\|_{L^{2}(\Gamma)}^{2}-\|\mathbf{u}_{f,h}^{n}\cdot\boldsymbol{\tau}_{j}\|_{L^{2}(\Gamma)}^{2}\nonumber\\
        &\quad+\|(\boldsymbol{\xi}_{p,h}^{n+1}-\mathbf{u}_{f,h}^{n})\cdot\boldsymbol{\tau}_{j}\|_{L^{2}(\Gamma)}^{2}\big),\label{eq:first_order_bjs_identity_poroelastic}\\
    &\sum_{j=1}^{d-1}\gamma_{\mathrm{BJS},j}
    \left\langle
    (\mathbf{u}_{f,h}^{n+1}-\boldsymbol{\xi}_{p,h}^{n})\cdot\boldsymbol{\tau}_{j},
    \mathbf{u}_{f,h}^{n+1}\cdot\boldsymbol{\tau}_{j}
    \right\rangle_{\Gamma}\nonumber\\
	 	&=\frac{1}{2}\sum_{j=1}^{d-1}\gamma_{\mathrm{BJS},j}\big(\|\mathbf{u}_{f,h}^{n+1}\cdot\boldsymbol{\tau}_{j}\|_{L^{2}(\Gamma)}^{2}-\|\boldsymbol{\xi}_{p,h}^{n}\cdot\boldsymbol{\tau}_{j}\|_{L^{2}(\Gamma)}^{2}\nonumber\\
        &\quad+\|(\mathbf{u}_{f,h}^{n+1}-\boldsymbol{\xi}_{p,h}^{n})\cdot\boldsymbol{\tau}_{j}\|_{L^{2}(\Gamma)}^{2}\big).\label{eq:first_order_bjs_identity_fluid}
\end{align}
Moreover, it easy to check that the following identity holds:
\begin{align}
    &\frac{L}{2}\langle\mathbf{u}_{f,h}^{n+1}\cdot\mathbf{n}_{f},\mathbf{u}_{f,h}^{n+1}\cdot\mathbf{n}_{f}\rangle_{\Gamma}+\frac{L}{2}\langle(\boldsymbol{\xi}_{p,h}^{n+1}+\mathbf{u}_{p,h}^{n+1})\cdot\mathbf{n}_{p},(\boldsymbol{\xi}_{p,h}^{n+1}+\mathbf{u}_{p,h}^{n+1})\cdot\mathbf{n}_{p}\rangle_{\Gamma}\nonumber\\
    &\quad+L\langle(\boldsymbol{\xi}_{p,h}^{n+1}+\mathbf{u}_{p,h}^{n+1})\cdot\mathbf{n}_{p},\mathbf{u}_{f,h}^{n+1}\cdot\mathbf{n}_{f}\rangle_{\Gamma}\nonumber\\
    &=\frac{L}{2}\|\mathbf{u}_{f,h}^{n+1}\cdot\mathbf{n}_{f}+(\boldsymbol{\xi}_{p,h}^{n+1}+\mathbf{u}_{p,h}^{n+1})\cdot\mathbf{n}_{p}\|_{L^{2}(\Gamma)}^{2}.\label{eq:first_order_normal_residual_identity}
\end{align}
Adding \eqref{eq:first_order_poroelastic_energy} and \eqref{eq:first_order_fluid_energy}, and then substituting \eqref{eq:first_order_theta_norm_inequality}--\eqref{eq:first_order_normal_residual_identity} into the resulting identity, we arrive at
\begin{align}
	 	d_{t}(\mathcal{E}_{h}^{n+1}+\Delta t\mathcal{I}_{h}^{n+1})+\mathcal{J}_{h}^{n+1}+\mathcal{D}_{h}^{n+1}\leq 0.\label{eq:first_order_one_step_energy}
\end{align}
Finally, applying $\Delta t\sum_{n=0}^{\ell}$ to both sides of \eqref{eq:first_order_one_step_energy} yields \eqref{eq:first_order_stability_estimate}. This completes the proof.

\section{Nonlinear simulations on moving fluid domains}
\label{subsec:nonlinear_moving_domains}

The preceding numerical experiments concern the linear Stokes--Biot
system posed on fixed computational domains. In this subsection, we
investigate the applicability of the proposed Robin partitioning
strategy to nonlinear fluid--poroelastic interaction problems with
moving fluid domains. The fluid motion is governed by the
incompressible Navier--Stokes equations, and the motion of the fluid
mesh is described by an arbitrary Lagrangian--Eulerian (ALE) mapping~\cite{donea2004arbitrary,tezduyar2001finite}.

Let $\widehat{\Omega}_f$ denote the reference fluid domain and let
$\widehat{\Gamma}$ be the fluid--poroelastic interface in the reference
configuration. The current fluid domain is defined by
\begin{equation*}
\Omega_f(t)
=
\mathcal{A}_t(\widehat{\Omega}_f),
\qquad
\mathcal{A}_t(\widehat{\mathbf{x}})
=
\widehat{\mathbf{x}}
+
\boldsymbol{\eta}_{\mathrm{ALE}}
(\widehat{\mathbf{x}},t),
\end{equation*}
where $\boldsymbol{\eta}_{\mathrm{ALE}}$ denotes the fluid mesh
displacement. The associated mesh velocity is
\begin{equation*}
\mathbf{w}_f
=
\partial_t\mathcal{A}_t
\circ\mathcal{A}_t^{-1}.
\end{equation*}
Accordingly, the fluid acceleration in the current configuration is
written in ALE form as
\begin{equation*}
\left.
\partial_t\mathbf{u}_f
\right|_{\widehat{\mathbf{x}}}
+
\big(
(\mathbf{u}_f-\mathbf{w}_f)\cdot\nabla
\big)\mathbf{u}_f.
\end{equation*}
The Navier--Stokes equations on the moving fluid domain therefore take
the form
\begin{align*}
\rho_f
\left[
\left.
\partial_t\mathbf{u}_f
\right|_{\widehat{\mathbf{x}}}
+
\big(
(\mathbf{u}_f-\mathbf{w}_f)\cdot\nabla
\big)\mathbf{u}_f
\right]
-\nabla\cdot
\boldsymbol{\sigma}_f(\mathbf{u}_f,p_f)
&=\mathbf{f}_f
&&\text{in }\Omega_f(t),\\
\nabla\cdot\mathbf{u}_f
&=0
&&\text{in }\Omega_f(t).
\end{align*}

The fluid mesh displacement is obtained by extending the poroelastic
interface displacement into the reference fluid domain. In generic
form, the mesh-extension problem used in the computations can be
written as
\begin{align*}
-\widehat{\nabla}\cdot
\left(
\alpha_{\mathrm{ALE}}
\widehat{\nabla}
\boldsymbol{\eta}_{\mathrm{ALE}}
\right)
&=\mathbf{0}
&&\text{in }\widehat{\Omega}_f,\\
\boldsymbol{\eta}_{\mathrm{ALE}}
&=\boldsymbol{\eta}_p
&&\text{on }\widehat{\Gamma},\\
\boldsymbol{\eta}_{\mathrm{ALE}}
&=\mathbf{0}
&&\text{on the fixed part of }
\partial\widehat{\Omega}_f.
\end{align*}
Here $\alpha_{\mathrm{ALE}}>0$ is the mesh-extension coefficient; the
choice $\alpha_{\mathrm{ALE}}=1$ corresponds to a harmonic extension,
whereas a spatially varying coefficient may be employed to protect the
smaller elements from excessive distortion.

The poroelastic equations and the physical interface conditions retain
the form introduced in Section~\ref{sec:fully_coupled_model}, with the
fluid quantities evaluated on the current interface
$\Gamma(t)=\mathcal{A}_t(\widehat{\Gamma})$. The Robin data are
constructed from previously available time levels, so the fluid and
poroelastic subproblems preserve the independent solution structure of
the proposed partitioned schemes.

We emphasize that the stability and error analysis developed in the
preceding sections is established for the linear Stokes--Biot problem
on fixed domains. The computations below constitute a numerical
extension of the proposed first- and second-order Robin partitioned
schemes to nonlinear Navier--Stokes--Biot systems on moving domains. A
rigorous analysis of the fully nonlinear ALE formulation is beyond the
scope of the present work.

The same families of finite element spaces as in the preceding
experiments are employed. The fluid velocity and pressure are
approximated by the $[P_2]^{d}$--$P_1$ Taylor--Hood pair, while continuous
piecewise linear elements are used for the poroelastic displacement,
structure velocity, Darcy velocity, pore pressure, and ALE mesh
displacement. The consistent Darcy residual stabilization is retained.
All physical quantities are given in the CGS unit system, and the common parameter set is
\begin{equation*}
\begin{aligned}
\rho_f &= 1~\mathrm{g/cm^3}, &
\mu_f &= 5\times10^{-3}~\mathrm{g/(cm\,s)}, &
\rho_p &= 1.2~\mathrm{g/cm^3},\\
\mu_p &= 5.575\times10^{3}~\mathrm{dyne/cm^2}, &
\lambda_p &= 1.7\times10^{4}~\mathrm{dyne/cm^2}, &
\alpha &= 1,\\
C_0 &= 10^{-3}~\mathrm{cm^2/dyne}, &
K &= 10^{-4}~\mathrm{cm^3\,s/g}, &
C_{\mathrm{BJS}} &= 1~\mathrm{g/(cm^2\,s)}.
\end{aligned}
\end{equation*}
The first two examples also employ the same temporal and Robin parameters,
\begin{equation*}
\Delta t=5\times10^{-4}~\mathrm{s},
\qquad
T=5~\mathrm{s},
\qquad
L_1=L_2=100~\mathrm{g/(cm^2\,s)}.
\end{equation*}
Thus, apart from the different inlet velocities specified separately below,
the first two examples use the same physical and numerical parameter sets.
The following three examples below involve different geometries,
loading conditions, and simulation intervals. Specifically, the first two examples are
intended to demonstrate the applicability of the two partitioned
schemes rather than to provide a direct comparison between their
accuracies. The third example extends the example in Subsection \ref{subsec:pressure_wave_benchmark} to three dimensions, thereby further demonstrating the applicability and effectiveness of the proposed algorithm.

\subsection{Flow through a deformable poroelastic chamber}
\label{subsubsec:nonlinear_chamber}

The first example considers flow through a central chamber connected
to narrower lower and upper passages. Two deformable poroelastic
components form the lateral boundaries of the chamber, as illustrated
in Figure~\ref{fig:nonlinear_chamber_geometry}. The configuration
contains abrupt changes in cross-section and is designed to generate
pronounced interaction between the nonlinear fluid motion and the
deformable porous structures.

\paragraph{Boundary conditions.}
A portion of the upper boundary of the fluid domain is prescribed as the fluid inlet,
$\Gamma_f^{\mathrm{in}}
=\{(x,y):-2<x<2,\ y=3.3\}$,
where the following velocity condition is imposed:
\begin{equation*}
\mathbf{u}_f
=
\begin{cases}
\bigl(0,-2\sin(\frac{1}{2}\pi t)\bigr)^T,
& 0\leq t<1,\\
(0,-2)^T,
& t\geq1.
\end{cases}
\end{equation*}
The lower boundary of the fluid domain is taken as the outlet
$\Gamma_f^{\mathrm{out}}$, where a stress-free boundary condition is
prescribed. No-slip conditions are imposed on the left and right
boundaries of the fluid domain. On the exterior left and right boundaries of
the poroelastic components, homogeneous Dirichlet conditions are
prescribed for the displacement, while a no-penetration condition is
imposed on the Darcy velocity. On the remaining fluid--poroelastic
interfaces, the coupling conditions introduced in
Section~\ref{sec:fully_coupled_model} are enforced.

\begin{figure}[!htbp]
    \centering
    \begin{tikzpicture}
        \node[anchor=south west, inner sep=0pt] (A) at (0,0) {
            \begin{subfigure}[b]{6.15cm}
                \centering
                \includegraphics[
                    height=6.3cm,
                    trim=500 0 500 0,
                    clip
                ]{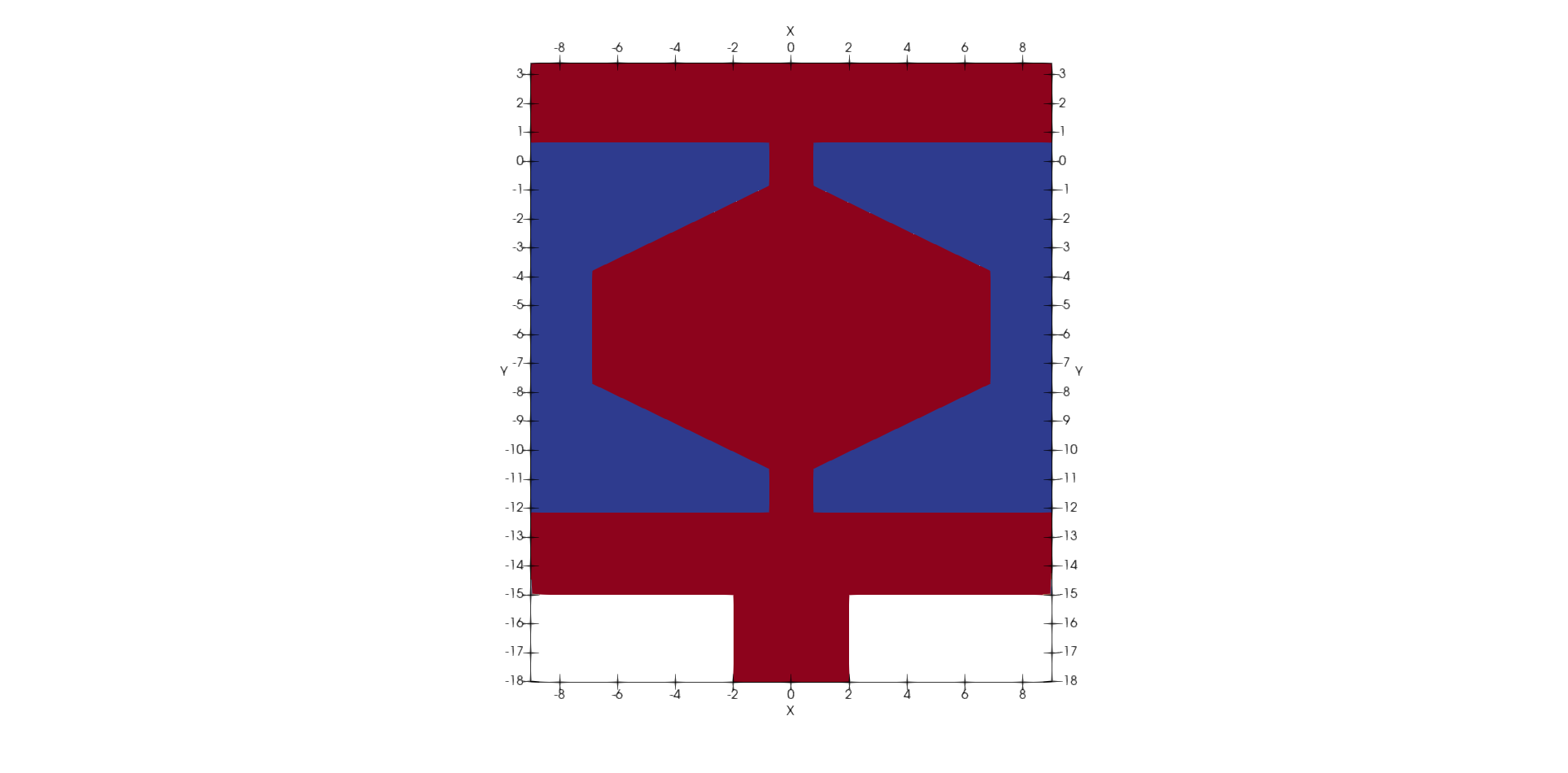}
                \caption{Computational domain.}
                \label{fig:domain_global}
            \end{subfigure}
        };
        \node[anchor=south west, inner sep=0pt] (B) at (6.75cm,0) {
            \begin{subfigure}[b]{6.15cm}
                \centering
                \includegraphics[
                    height=3.4cm,
                    trim=300 0 300 200,
                    clip
                ]{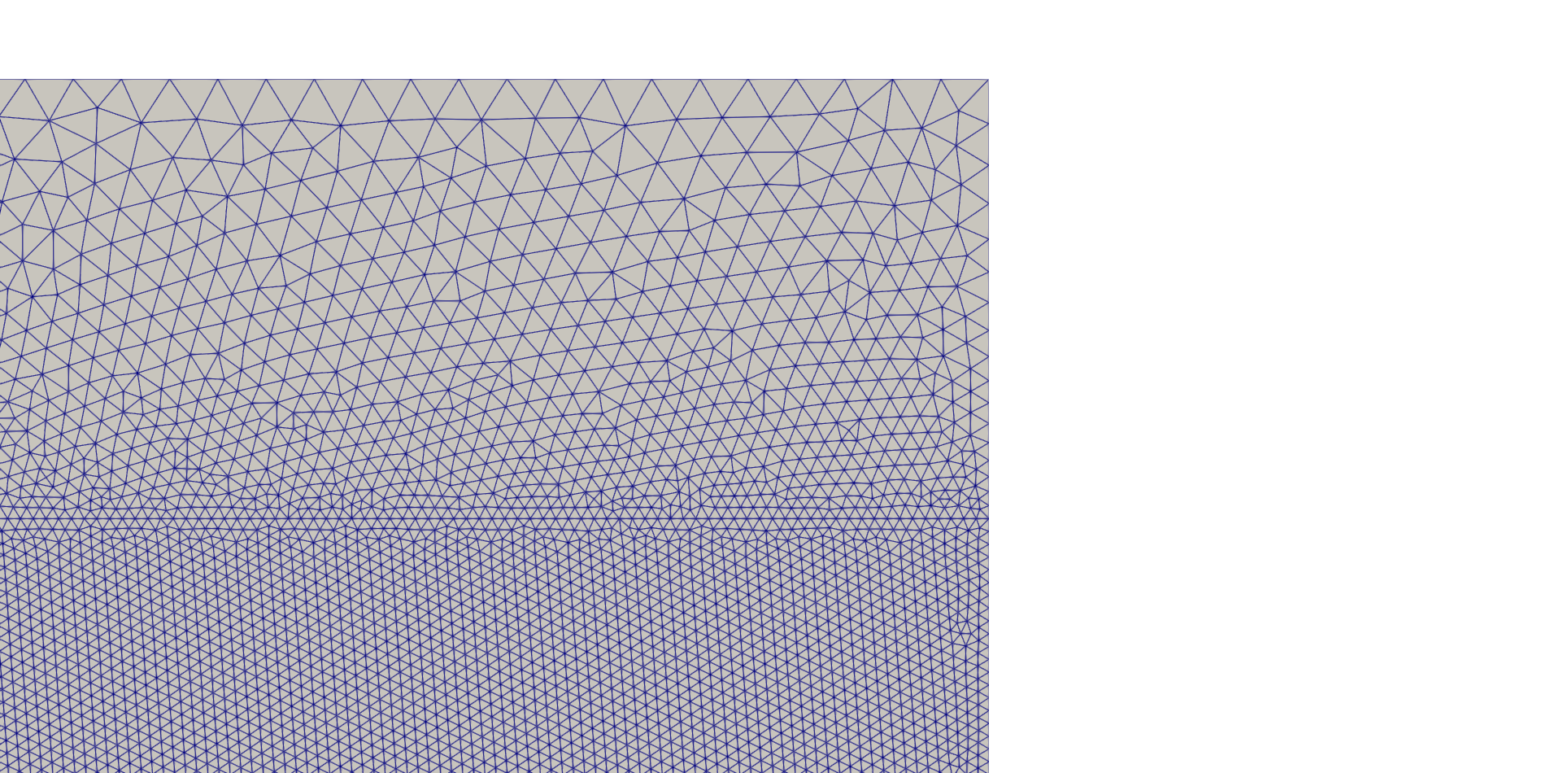}
                \caption{Local view of the refined mesh.}
                \label{fig:domain_zoom}
            \end{subfigure}
        };
        \draw[red, dashed, thick]
            ($(A.north west)+(4.50cm,-0.85cm)$)
            rectangle
            ($(A.north west)+(5.10cm,-1.45cm)$);
        \draw[red, thick, ->]
            ($(A.north west)+(5.10cm,-1.15cm)$)
            --
            ($(B.north west)+(0.12cm,-1.15cm)$);
    \end{tikzpicture}

    \caption{
    Initial computational geometry and mesh configuration for the
    deformable poroelastic chamber. The red region represents the fluid
    domain and the blue regions represent the poroelastic media.
    Panel~(b) shows a magnified view of the locally refined mesh.
    The fluid and poroelastic meshes contain $49\,060$ and $46\,912$
    cells, respectively.
    }
    \label{fig:nonlinear_chamber_geometry}

\end{figure}

The first-order Robin partitioned scheme is used for this computation.
Figure~\ref{fig:nonlinear_chamber_fields} shows the poroelastic
displacement magnitude, fluid pressure, and fluid velocity magnitude at
$t=1$, $25$, and $5~\mathrm{s}$. The white curves in the velocity panels
represent instantaneous streamlines.

The displacement is concentrated near the narrower connections between
the poroelastic components and the surrounding fixed boundaries. As
the flow evolves, the locations of the largest deformation change
smoothly while the overall response remains compatible with the
geometry of the chamber. The pressure field undergoes a substantial
redistribution between the upper and lower passages and the central
region. The velocity plots show jets through the narrow openings and
recirculating structures inside the enlarged chamber. The changing
streamline pattern demonstrates the influence of the moving
poroelastic boundaries on the nonlinear fluid motion.

\begin{figure}[!p]
\centering

\setlength{\tabcolsep}{0pt}

\newcommand{\etafig}[1]{%
  \includegraphics[
    width=0.325\textwidth,
    trim=470bp 170bp 450bp 140bp,
    clip
  ]{#1.png}%
}

\newcommand{\pfig}[1]{%
  \includegraphics[
    width=0.325\textwidth,
    trim=520bp 10bp 410bp 30bp,
    clip
  ]{#1.png}%
}

\newcommand{\ufig}[1]{%
  \includegraphics[
    width=0.325\textwidth,
    trim=460bp 15bp 530bp 15bp,
    clip
  ]{#1.png}%
}

\begin{tabular}{@{}ccc@{}}
$t=1$
&
$t=2.5$
&
$t=5$
\\[0.3em]

\etafig{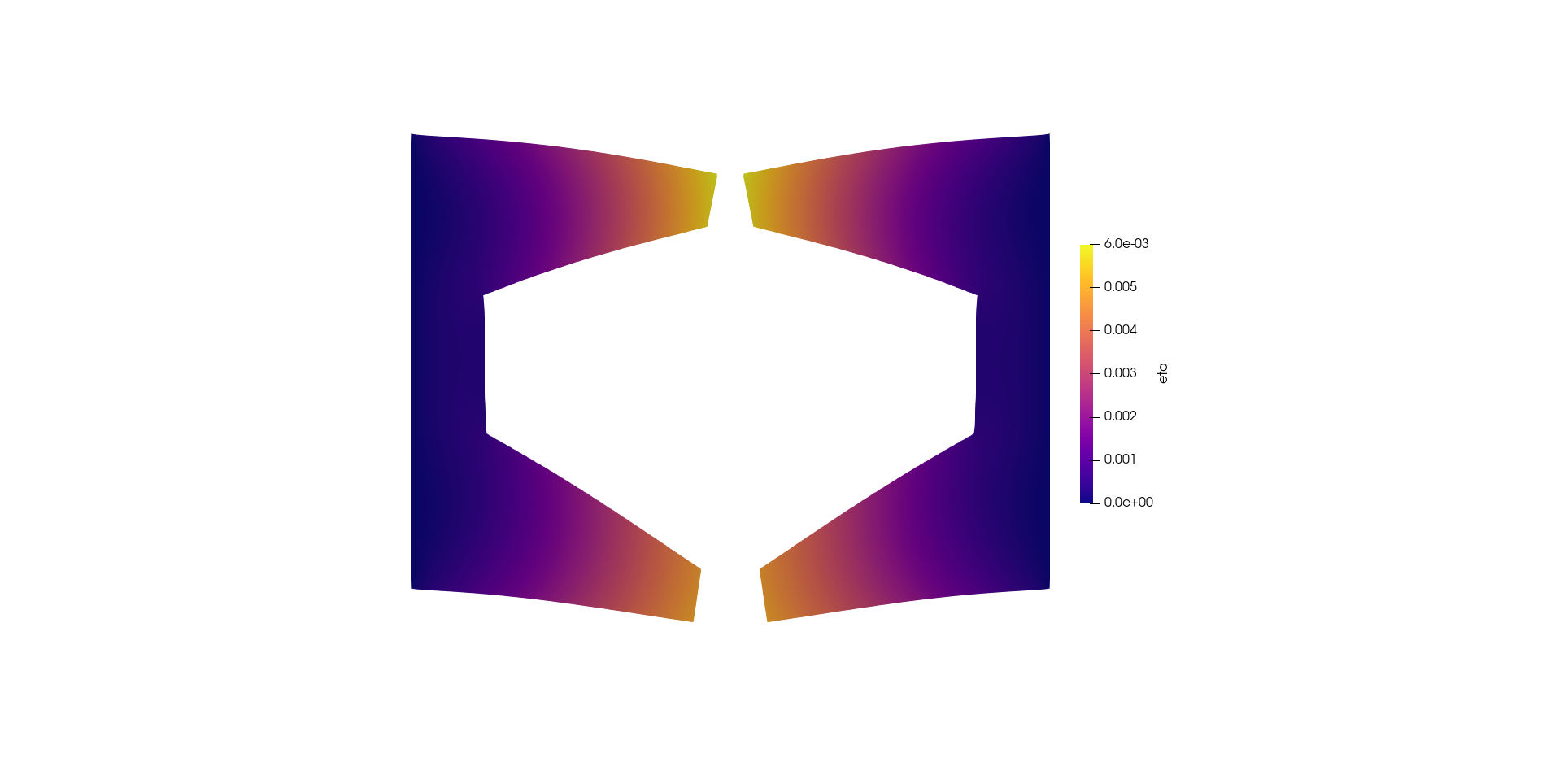}
&
\etafig{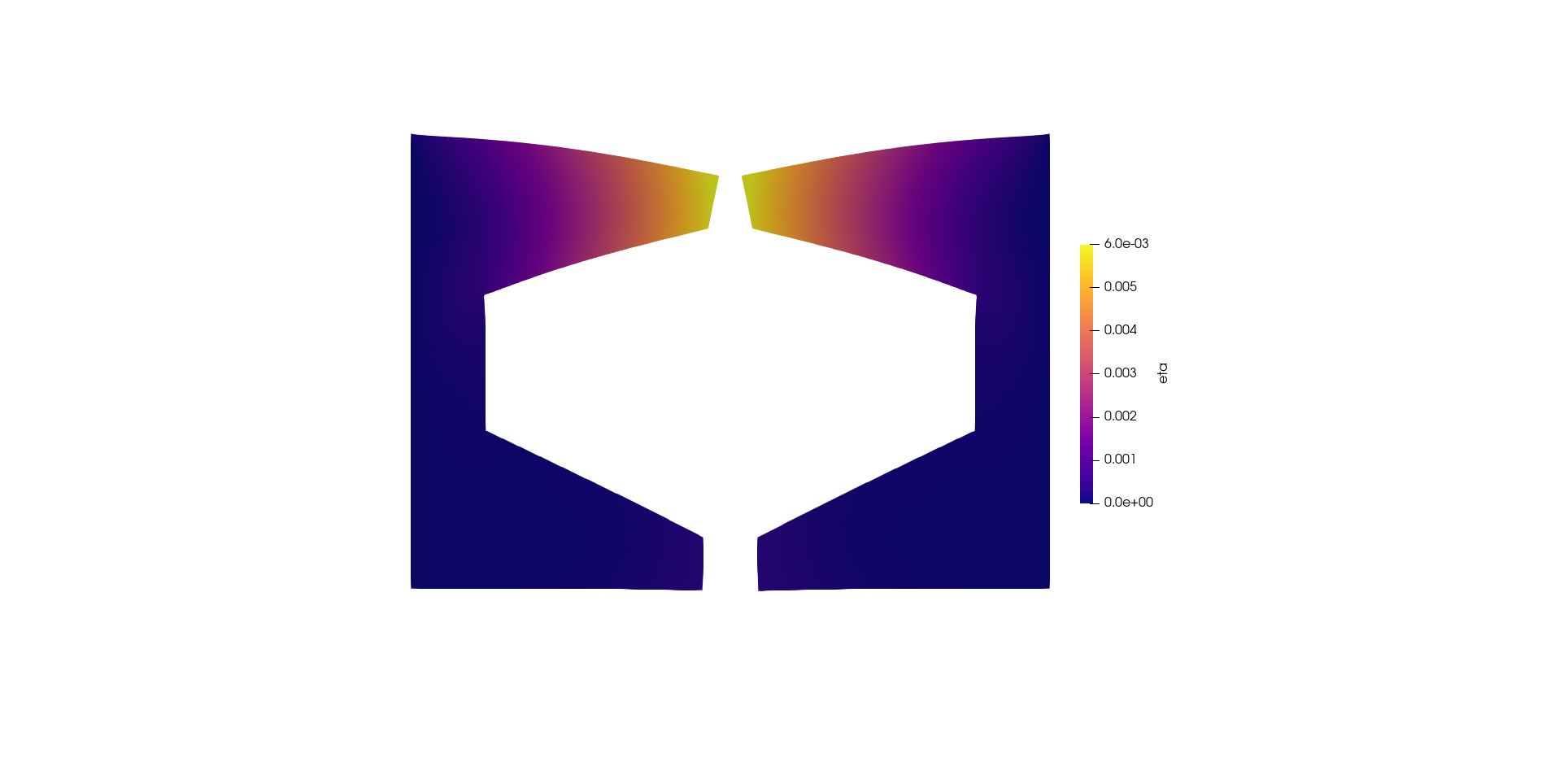}
&
\etafig{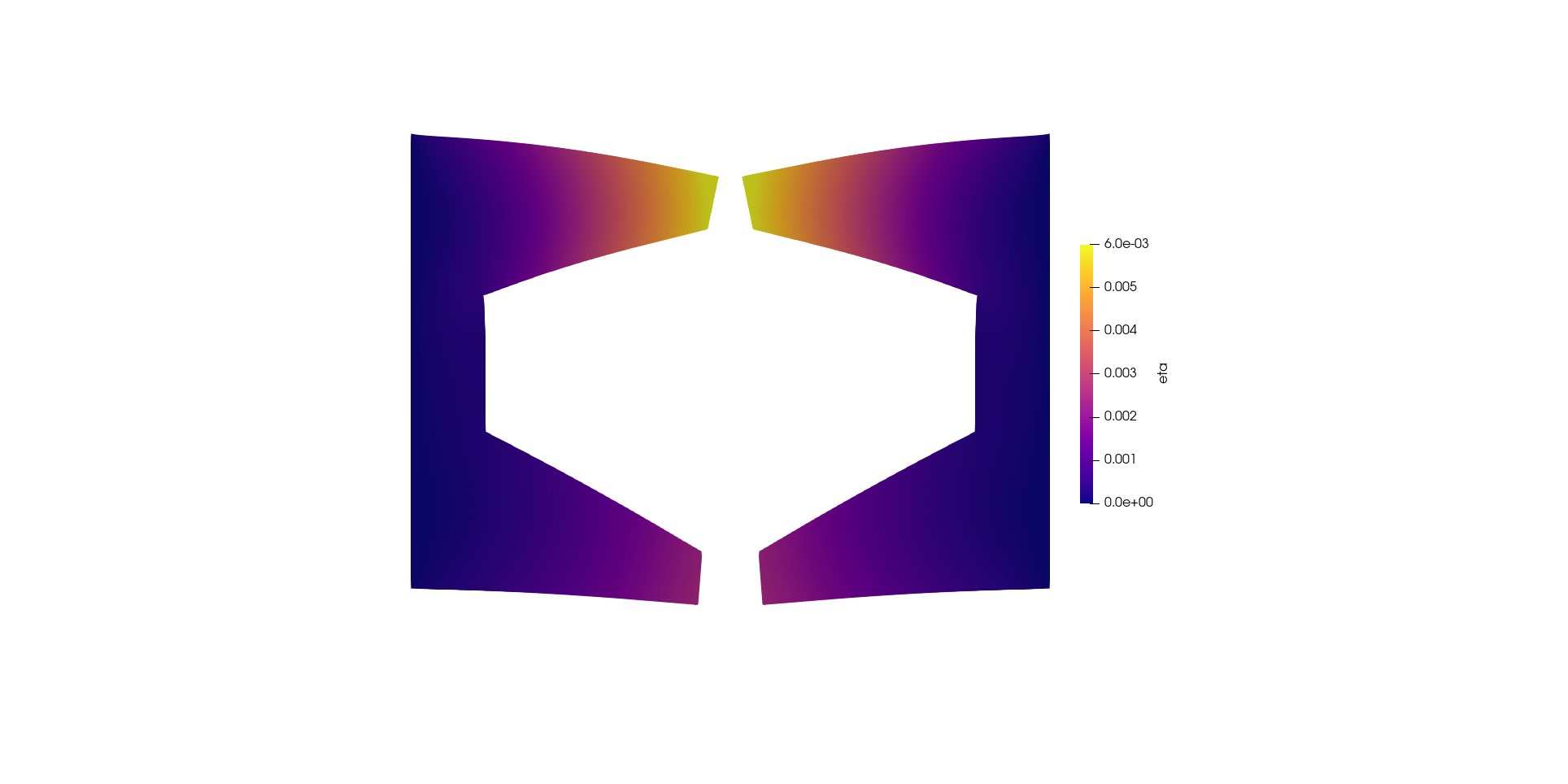}
\\[-0.6em]

\pfig{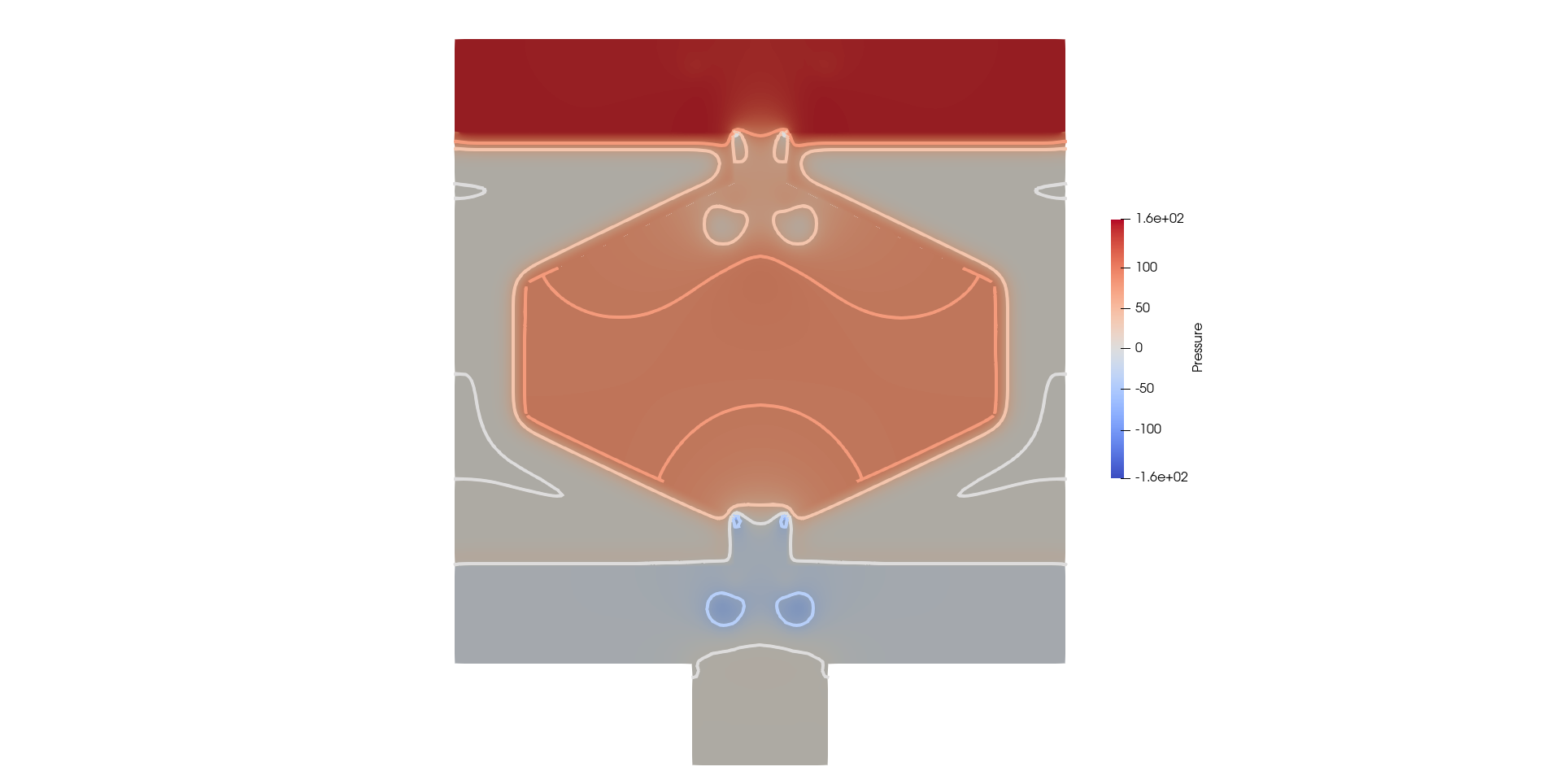}
&
\pfig{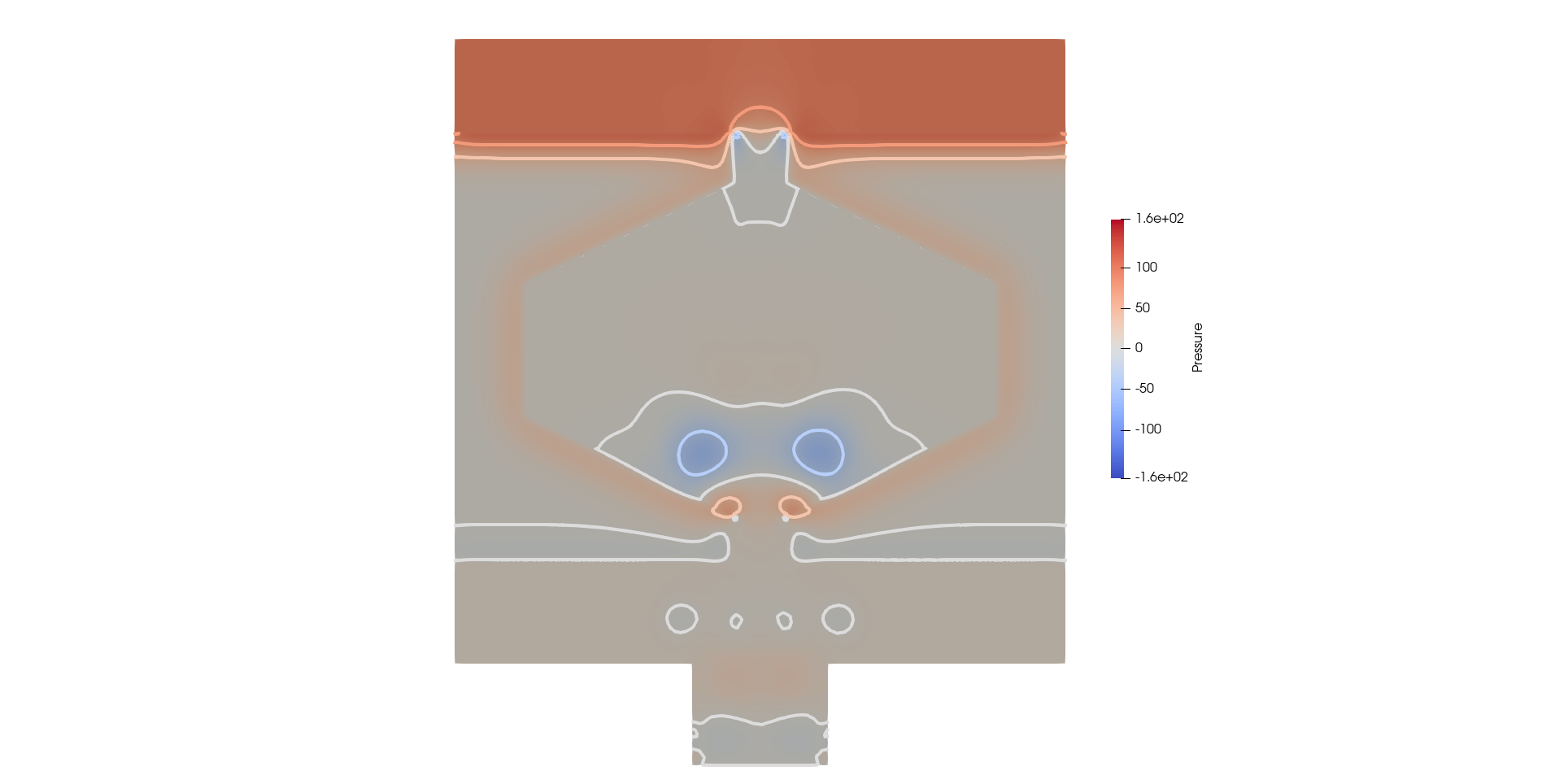}
&
\pfig{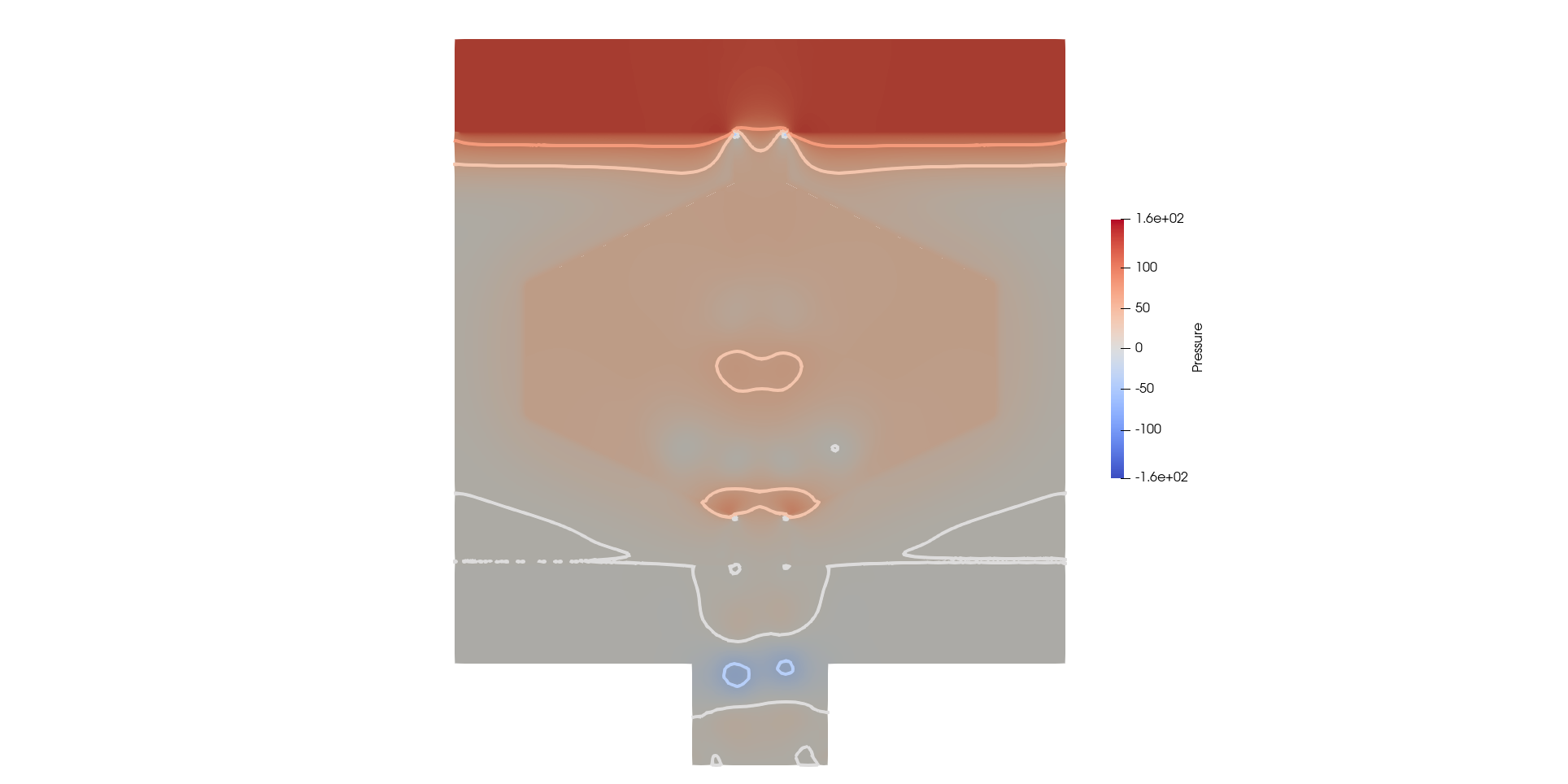}
\\[-0.6em]

\ufig{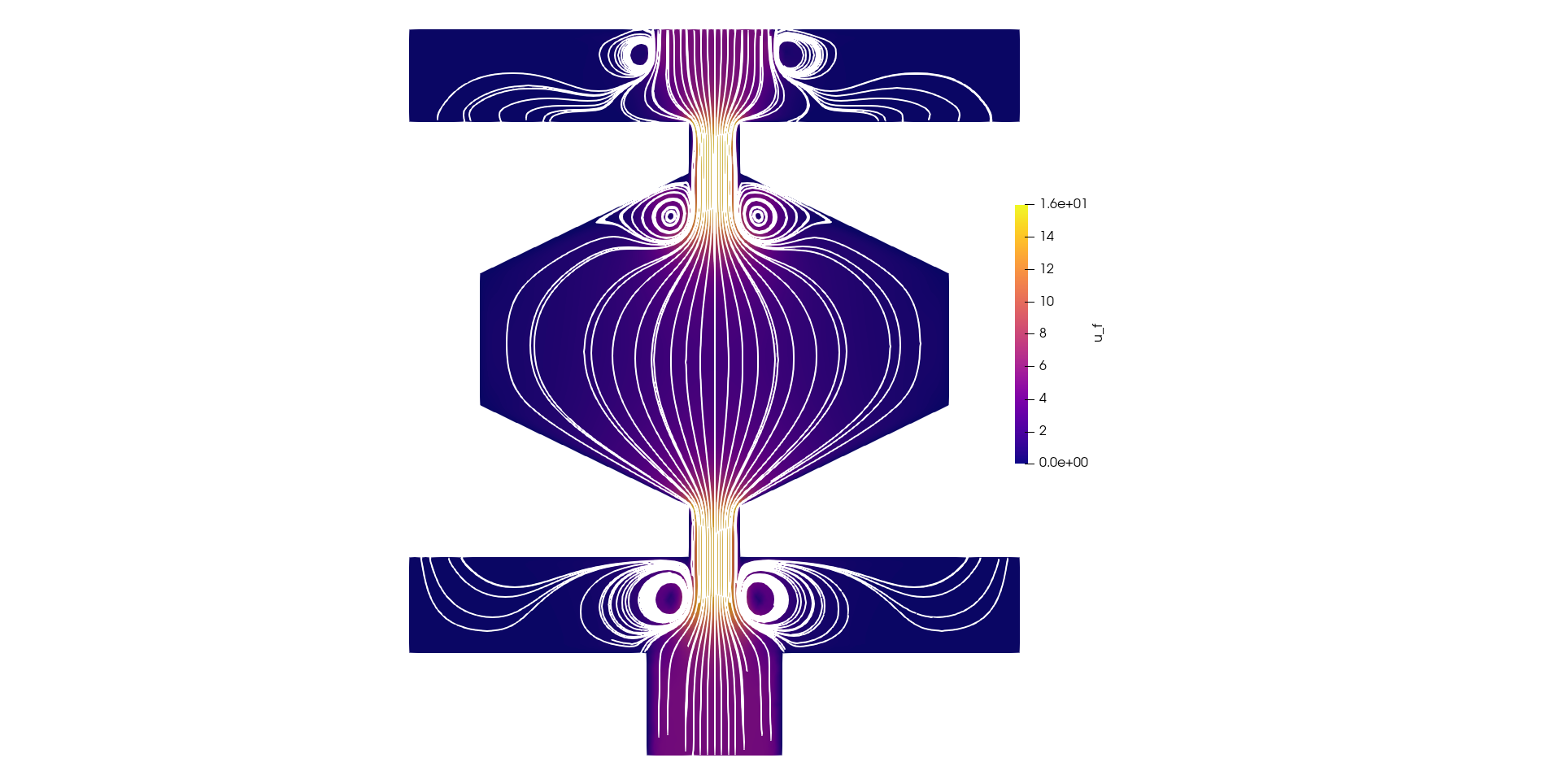}
&
\ufig{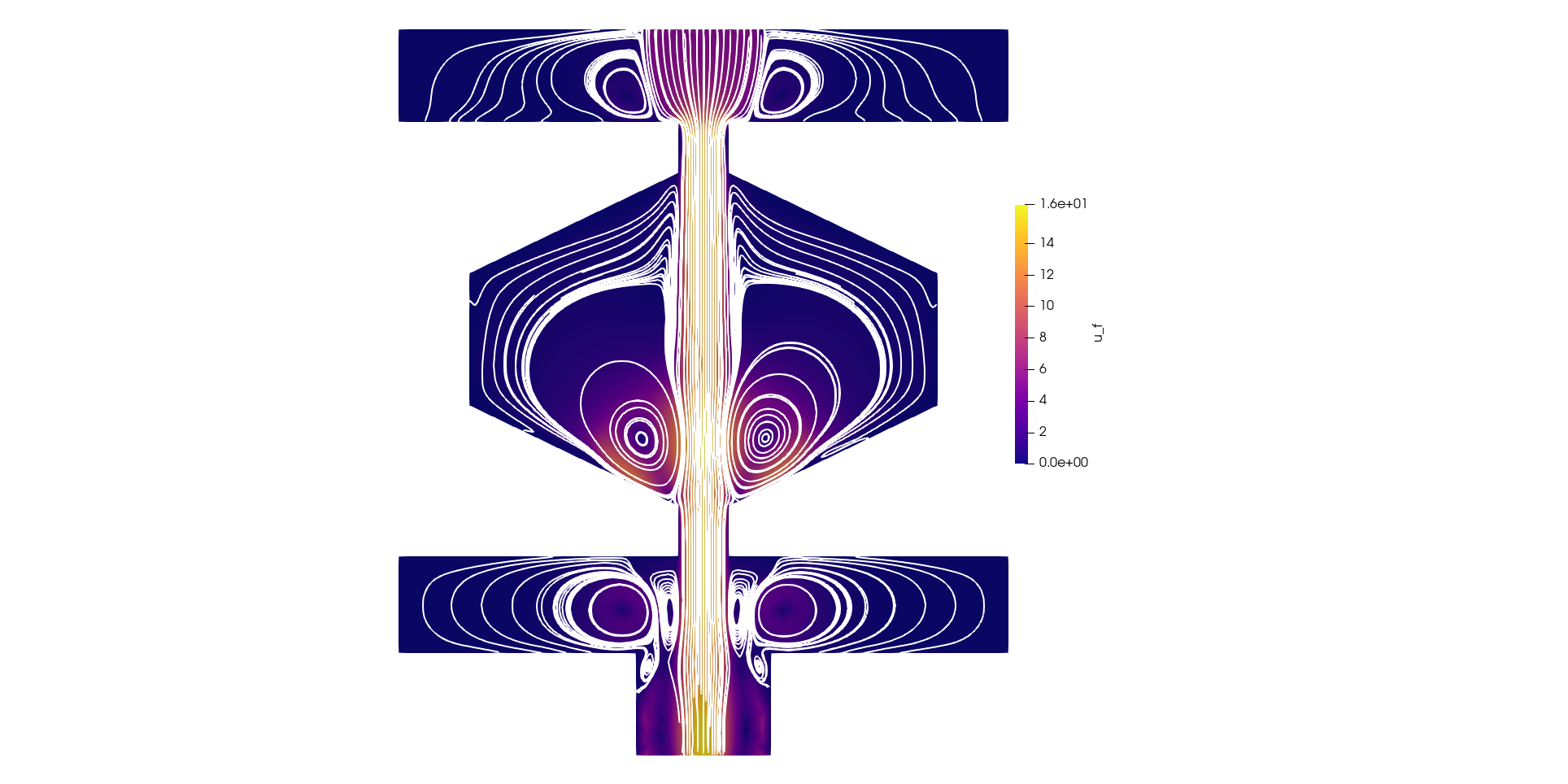}
&
\ufig{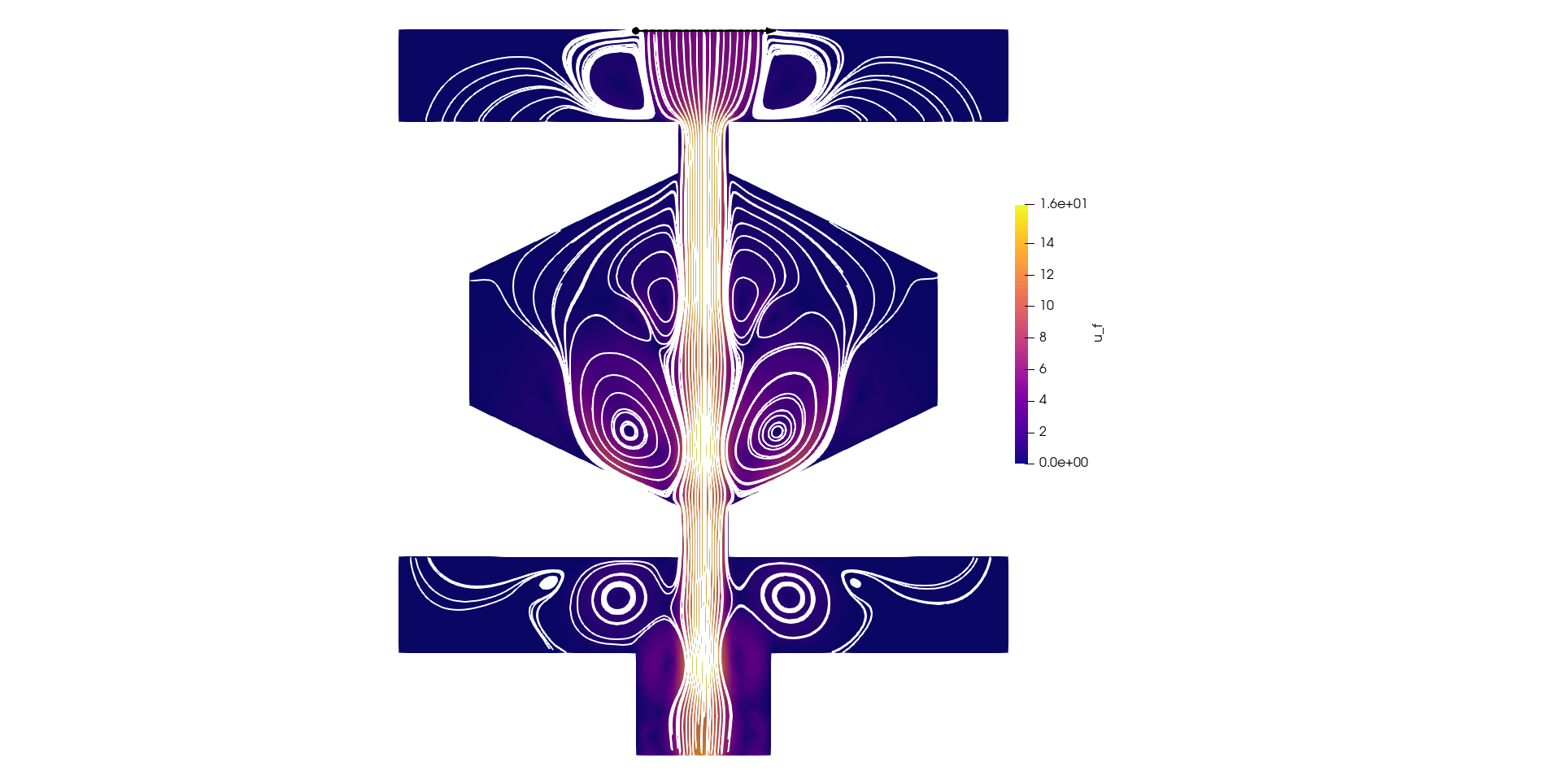}
\end{tabular}

\caption{Evolution of the coupled solution in the deformable chamber
computed with the first-order Robin partitioned scheme. From top to
bottom, the rows show the poroelastic displacement magnitude
$|\boldsymbol{\eta}_p|$, the fluid pressure $p_f$, and the fluid
velocity magnitude $|\mathbf{u}_f|$ together with instantaneous
streamlines, respectively. For visualization, the deformation in the
displacement panels is magnified by a factor of $200$.}
\label{fig:nonlinear_chamber_fields}
\end{figure}

\FloatBarrier

\subsection{Flow through a long channel with a deformable poroelastic segment}
\label{subsubsec:nonlinear_long_channel}

The second example considers a long fluid channel containing two
localized poroelastic segments near the inlet, one on each side of the
fluid region. Its initial configuration is shown in
Figure~\ref{fig:nonlinear_long_channel_geometry}. Deformation of
the two poroelastic components produces a moving constriction near the
upstream part of the channel, whereas the extended downstream region
allows the pressure disturbance and the nonlinear flow structures to
develop over a comparatively long distance.

\begin{figure}[!htbp]
\centering
\includegraphics[width=0.96\textwidth]
{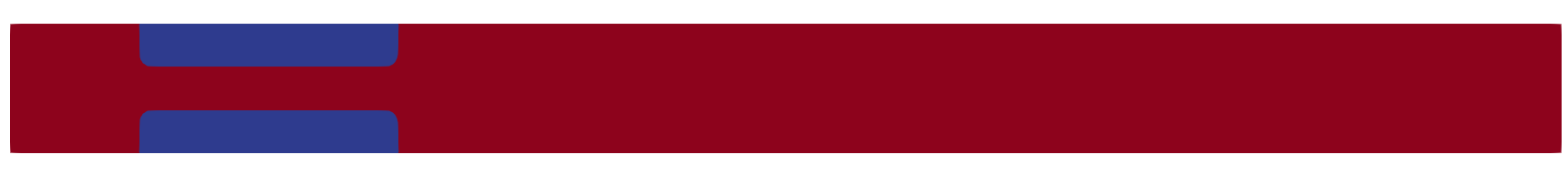}
\caption{Initial computational geometry for the long channel with
localized deformable poroelastic segments. The red region represents
the fluid domain and the blue regions represent the poroelastic media.
The fluid and poroelastic meshes contain $64\,670$ and $31\,220$
cells, respectively.}
\label{fig:nonlinear_long_channel_geometry}
\end{figure}

\FloatBarrier

\paragraph{Boundary conditions.}
Similar to the boundary setting in the previous example, the left
boundary of the fluid domain is prescribed as the inlet,
$\Gamma_f^{\mathrm{in}}
=\{(x,y):x=0,\ 0<y<1\}$,
where the following velocity condition is imposed:
\begin{equation*}
\mathbf{u}_f
=
\begin{cases}
\bigl(20\sin(\frac{1}{2}\pi t)y(1 - y),0\bigr)^T,
& 0\leq t<1,\\
(20y(1 - y),0)^T,
& t\geq1.
\end{cases}
\end{equation*}
The right boundary of the fluid domain is taken as the outlet,
$\Gamma_f^{\mathrm{out}}
=\{(x,y):x=12,\ 0<y<1\}$,
where a stress-free boundary condition is prescribed.
The upper poroelastic component has a thickness of $0.3$, and the
lower component is arranged symmetrically with respect to the upper one. 
The upper and lower boundaries of the fluid and poroelastic
domains are treated in the same manner as in the previous example.

The second-order Robin partitioned scheme is employed in this example,
and the first BDF2 step is initialized by the first-order partitioned
method. The numerical solution is examined at the representative times
$t=1$, $2.5$, and $5~\mathrm{s}$.

Figure~\ref{fig:nonlinear_long_channel_fluid_fields} presents the
evolution of the fluid pressure and velocity fields. The left column
shows the fluid pressure $p_f$, while the right column shows the
velocity magnitude $|\mathbf{u}_f|$ together with instantaneous
streamlines. From top to bottom, the rows correspond to $t=1$, $2.5$,
and $5~\mathrm{s}$.

At the initial observation time, the pressure variation and the main
fluid acceleration remain concentrated near the deformable
constriction. As the simulation proceeds, the pressure disturbance
propagates into the downstream part of the channel and develops a more
spatially nonuniform distribution. The velocity field initially
exhibits a relatively organized high-speed region emerging from the
constriction. At the later observation times, elongated recirculating
regions and increasingly complex streamline patterns appear farther
downstream. These results illustrate the combined effects of nonlinear
convection, the localized moving boundary, and the long propagation
distance.

\begin{figure}[htbp]
\centering
\setlength{\tabcolsep}{0pt}

\newcommand{\longpfig}[1]{%
  \includegraphics[
    width=0.98\linewidth,
    trim=100bp 395bp 210bp 260bp,
    clip
  ]{#1}%
}

\newcommand{\longufig}[1]{%
  \includegraphics[
    width=0.98\linewidth,
    trim=100bp 395bp 210bp 260bp,
    clip
  ]{#1}%
}

\begin{tabular}{
@{}
>{\centering\arraybackslash}m{0.47\textwidth}
@{\hspace{0.015\textwidth}}
>{\centering\arraybackslash}m{0.47\textwidth}
@{}
}
\textbf{Fluid pressure $p_f$}
&
\textbf{Velocity magnitude $|\mathbf{u}_f|$}
\\[0.4em]

\longpfig{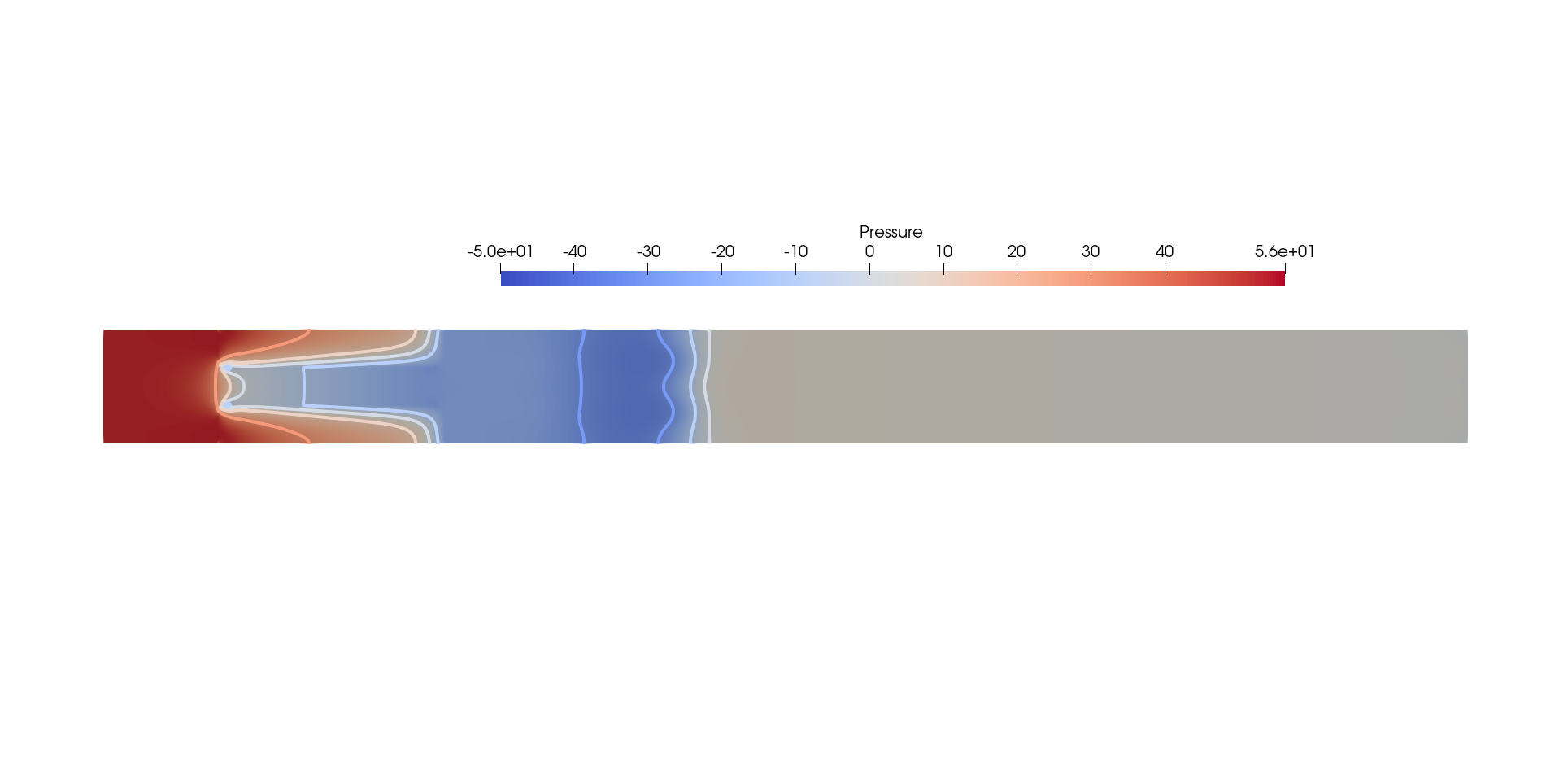}
&
\longufig{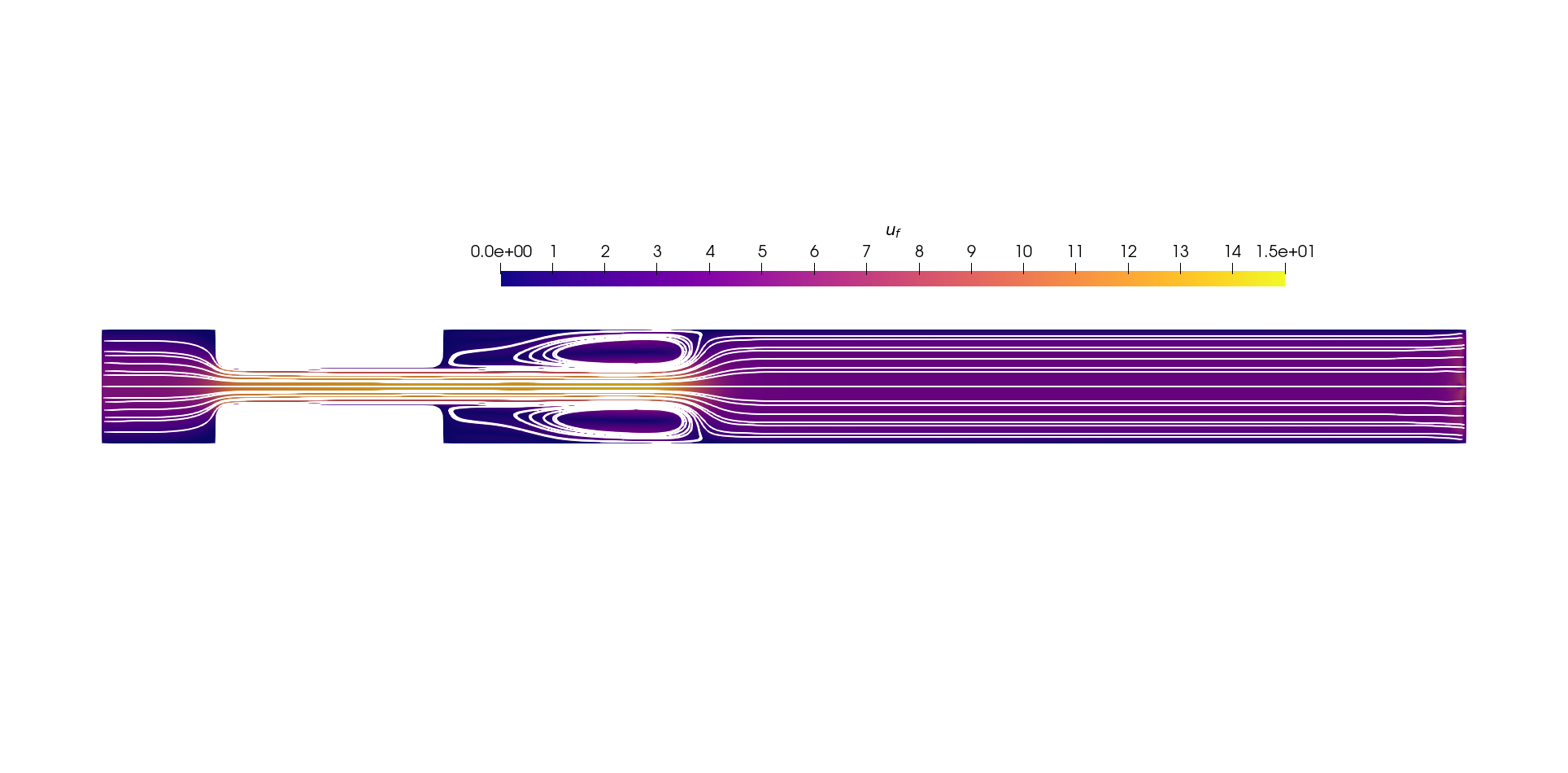}
\\[0.15em]

\longpfig{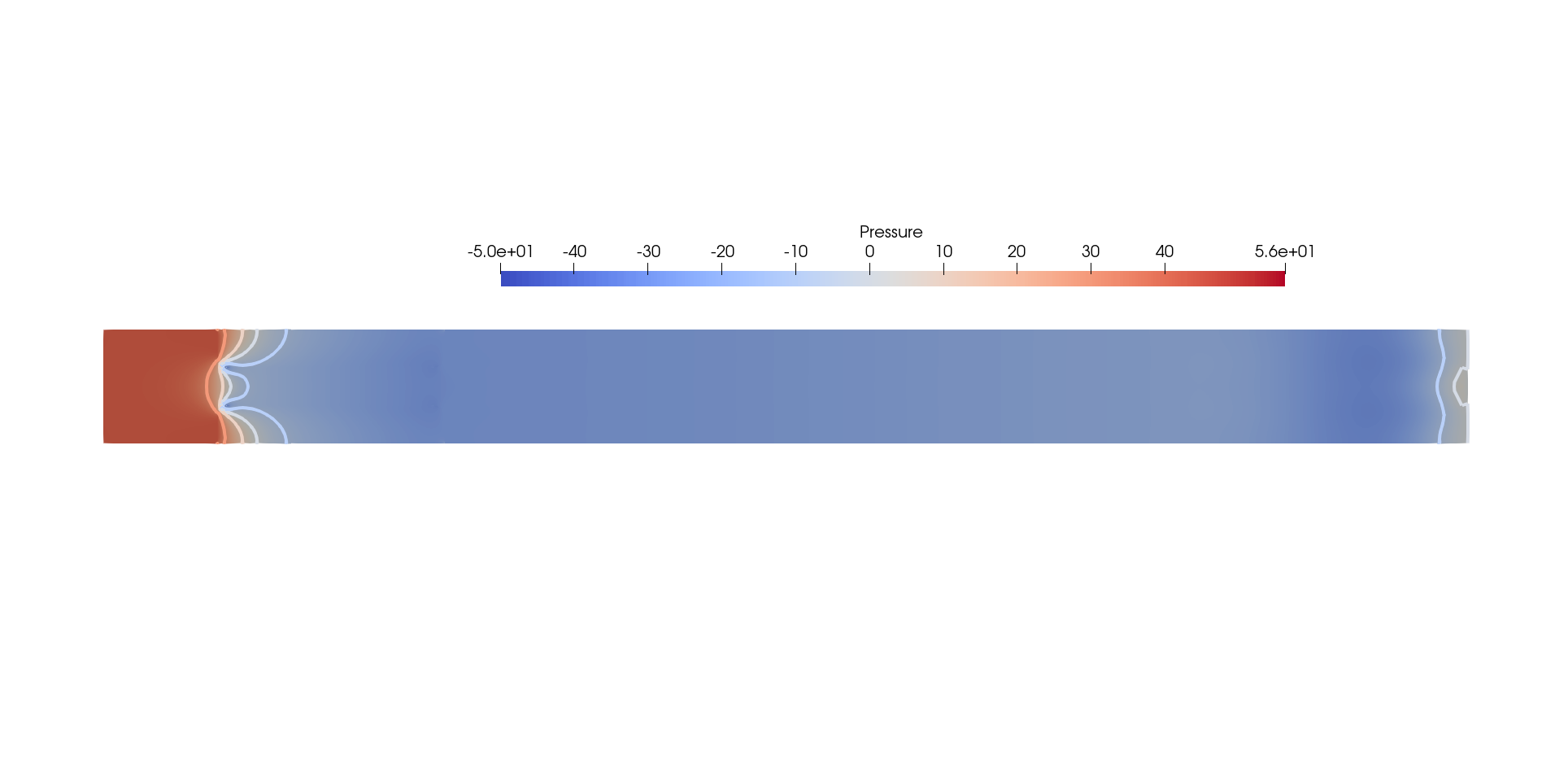}
&
\longufig{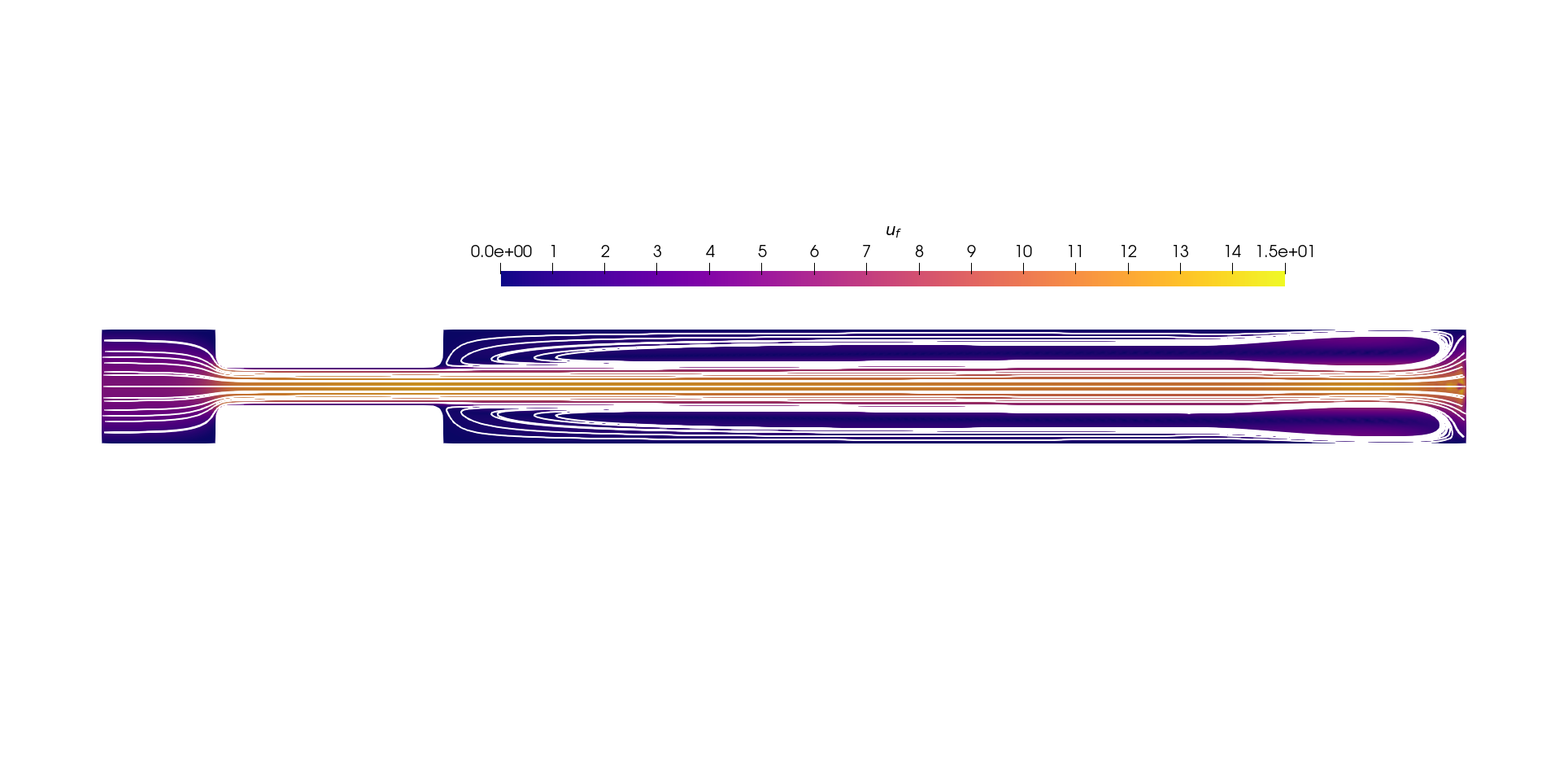}
\\[0.15em]

\longpfig{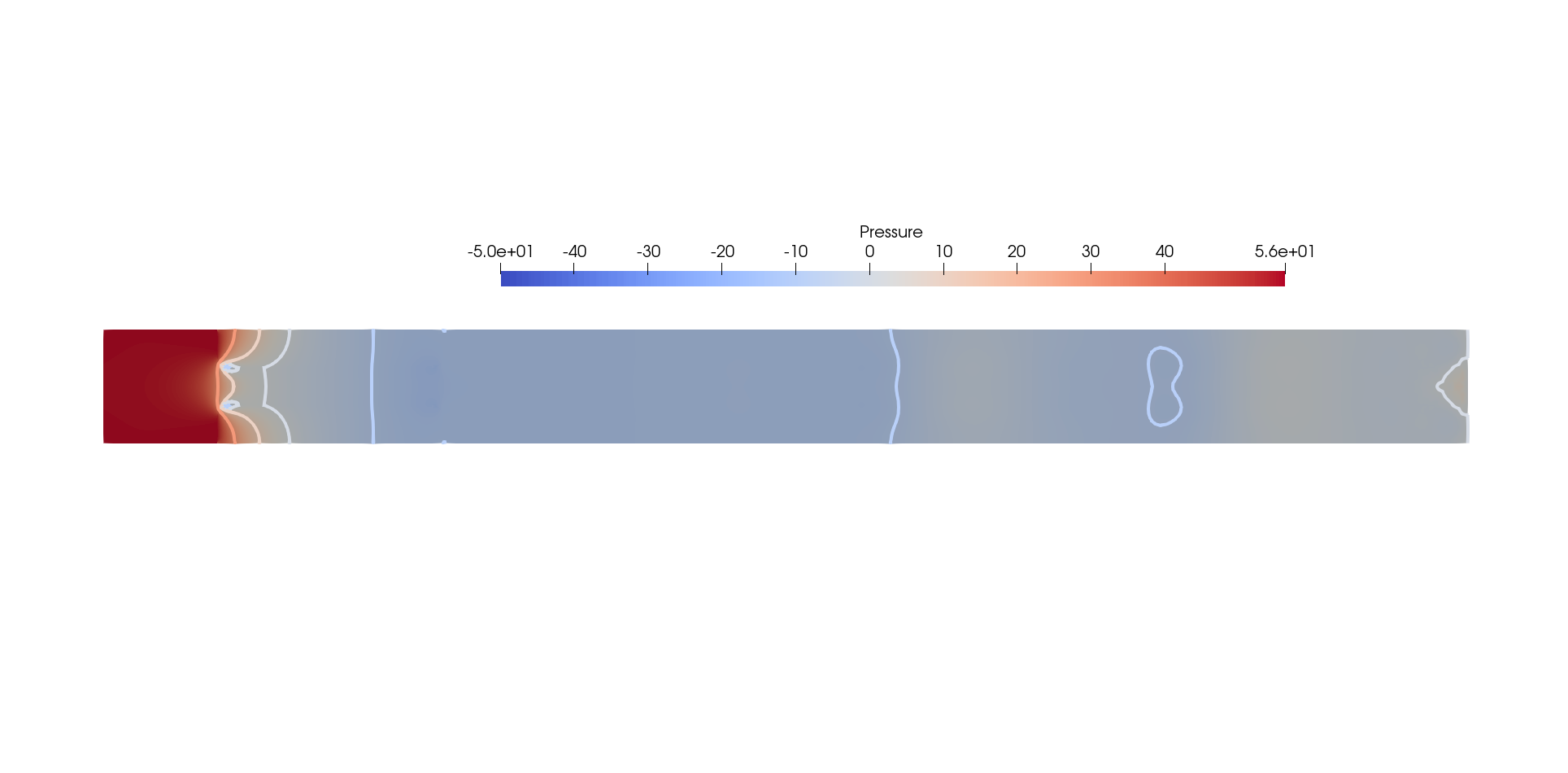}
&
\longufig{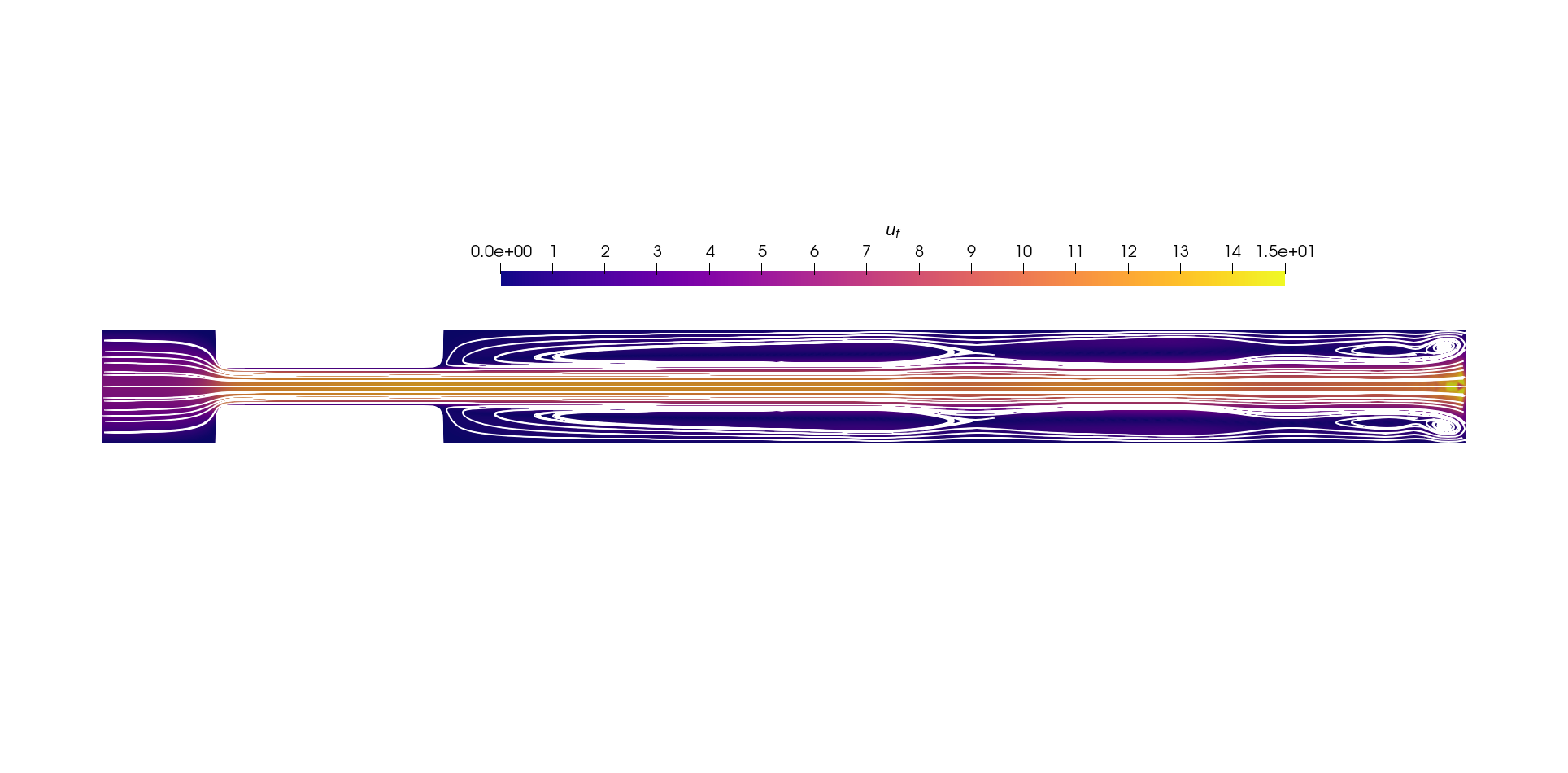}
\end{tabular}

\caption{Evolution of the fluid pressure and velocity fields in the
long deformable channel computed with the second-order Robin
partitioned scheme. The left and right columns show the fluid pressure
$p_f$ and the velocity magnitude $|\mathbf{u}_f|$ together with
instantaneous streamlines, respectively. From top to bottom, the rows
correspond to $t=1$, $2.5$, and $5$.}
\label{fig:nonlinear_long_channel_fluid_fields}
\end{figure}

\FloatBarrier

The spatial pattern of the poroelastic displacement remains
qualitatively similar at the selected observation times. Therefore,
rather than repeating three nearly identical displacement panels, a
representative displacement field at $t=5~\mathrm{s}$ is displayed in
Figure~\ref{fig:nonlinear_long_channel_displacement}. The deformation
is localized in the two poroelastic components near the inlet, with
the largest displacement occurring close to the moving constriction.

\begin{figure}[!htbp]
\centering
\includegraphics[
  width=0.82\textwidth,
  trim=300bp 200bp 200bp 200bp,
  clip
]{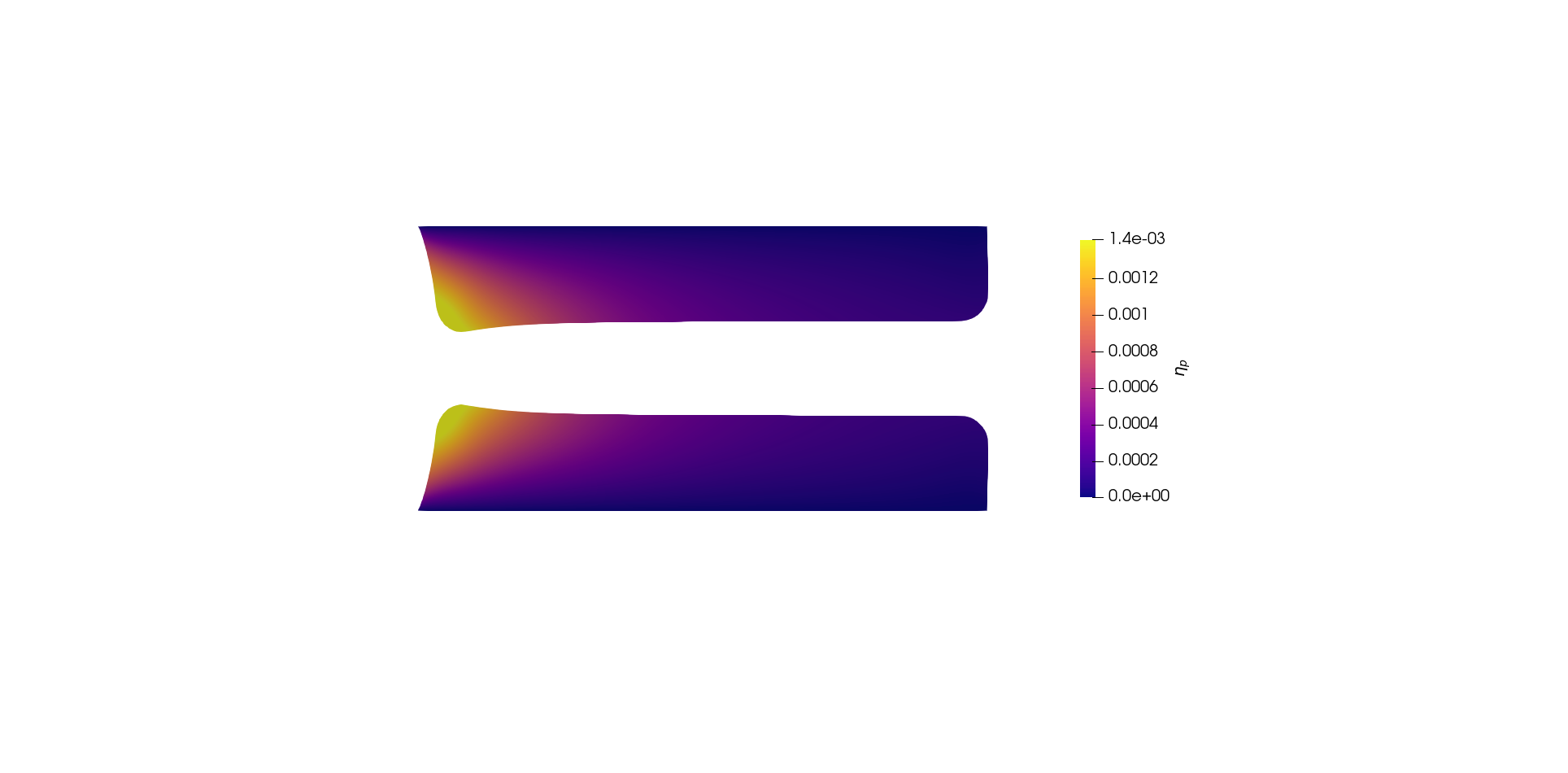}
\caption{Poroelastic displacement magnitude
$|\boldsymbol{\eta}_p|$ at $t=2.5$ in the long deformable channel,
computed with the second-order Robin partitioned scheme. For
visualization, the deformation is magnified by a factor of $1000$.}
\label{fig:nonlinear_long_channel_displacement}
\end{figure}

\FloatBarrier

In a word, the computations remain stable over the full simulation interval and
capture the coupled evolution of the moving poroelastic segment, the
fluid pressure, and the nonlinear velocity field. The more complicated
downstream structures observed at the latest time may also be
influenced by the outflow boundary treatment and the finite length of
the computational channel. A systematic investigation based on
additional mesh, time-step, and outlet-location studies is left for
future work.

\FloatBarrier

\subsection{Three-dimensional pressure-wave propagation in a deformable\\ poroelastic tube}
\label{subsubsec:nonlinear_3d_pressure_wave}

As a final test, we consider the fully three-dimensional counterpart of the
pressure-wave benchmark introduced in
Subsection~\ref{subsec:pressure_wave_benchmark}. In contrast to the
two-dimensional half-channel configuration, the complete circumferential
directions of both the fluid channel and the poroelastic wall are resolved.
In the reference configuration, the fluid and poroelastic domains are
\begin{equation*}
\widehat{\Omega}_f
=
\left\{
(x,y,z)\in\mathbb{R}^3:
0<x<H,\quad
y^2+z^2<R^2
\right\},
\end{equation*}
and
\begin{equation*}
\widehat{\Omega}_p
=
\left\{
(x,y,z)\in\mathbb{R}^3:
0<x<H,\quad
R^2<y^2+z^2<(R+r_p)^2
\right\},
\end{equation*}
respectively, with $H=6~\mathrm{cm},~
R=0.5~\mathrm{cm},~
r_p=0.1~\mathrm{cm}$. The fluid--poroelastic interface is the cylindrical surface
$y^2+z^2=R^2$.

The boundary and initial conditions are the three-dimensional counterparts
of those used in Subsection~\ref{subsec:pressure_wave_benchmark}. In
particular, the same time-dependent pressure pulse is prescribed on the
fluid inlet at $x=0$, while a homogeneous traction condition is imposed at
the fluid outlet at $x=H$. The poroelastic wall is clamped on its two
annular end faces, and the Darcy velocity satisfies the impermeability
condition on the two end faces and the exterior cylindrical wall. The system
is initially at rest. Since the complete circumferential deformation of the
wall is resolved directly in the three-dimensional geometry, the auxiliary
linear spring term introduced in the two-dimensional benchmark to represent
circumferential recoil is not included here.

All remaining material parameters are taken from
Table~\ref{tab:pressure_wave_parameters}. The first-order Robin partitioned
scheme is employed with $
\Delta t=5\times10^{-5}~\mathrm{s},~L=500$. And the fluid and poroelastic domains are discretized by tetrahedral meshes
containing $110\,935$ and $110\,036$ cells, respectively. Numerical
results are displayed at four representative times,
$t=0.0035~\mathrm{s}$, $0.007~\mathrm{s}$,
$0.0105~\mathrm{s}$, and $0.014~\mathrm{s}$,
which illustrate successive stages of the pressure-wave propagation.

To visualize the three-dimensional rotation-dominated flow structures, we
use the $Q$-criterion
\begin{equation*}
Q
=
\frac{1}{2}
\left(
\|\boldsymbol{W}_f\|_F^2
-
\|\mathbf{S}_f\|_F^2
\right),
\end{equation*}
where
$
\mathbf{S}_f
=
\frac{1}{2}
\left(
\nabla\mathbf{u}_f+\nabla\mathbf{u}_f^T
\right),
\qquad
\boldsymbol{W}_f
=
\frac{1}{2}
\left(
\nabla\mathbf{u}_f-\nabla\mathbf{u}_f^T
\right).
$
Positive values of $Q$ indicate regions in which the local rotational
contribution dominates the strain-rate contribution. The isosurface
$Q=1000$ is used in the following visualizations.

Figure~\ref{fig:nonlinear_3d_velocity_displacement} presents the coupled
evolution of the fluid velocity, wall deformation, and
rotation-dominated structures. The deformed poroelastic wall is colored by
the displacement magnitude $|\boldsymbol{\eta}_p|$, while the fluid region
is rendered semi-transparently and colored by the velocity magnitude
$|\mathbf{u}_f|$. Isosurfaces at $Q=1000$ are superimposed to identify the
principal three-dimensional rotation-dominated regions. For visualization, poroelastic deformation is magnified by a factor of $5$, while the
color scale represents the actual displacement magnitude.

\begin{figure}[!htbp]
\centering
\setlength{\tabcolsep}{2pt}

\newcommand{\threeduf}[1]{%
  \includegraphics[
    width=0.455\textwidth,
    trim=330bp 70bp 210bp 160bp,
    clip
  ]{#1}%
}

\begin{tabular}{@{}cc@{}}
\threeduf{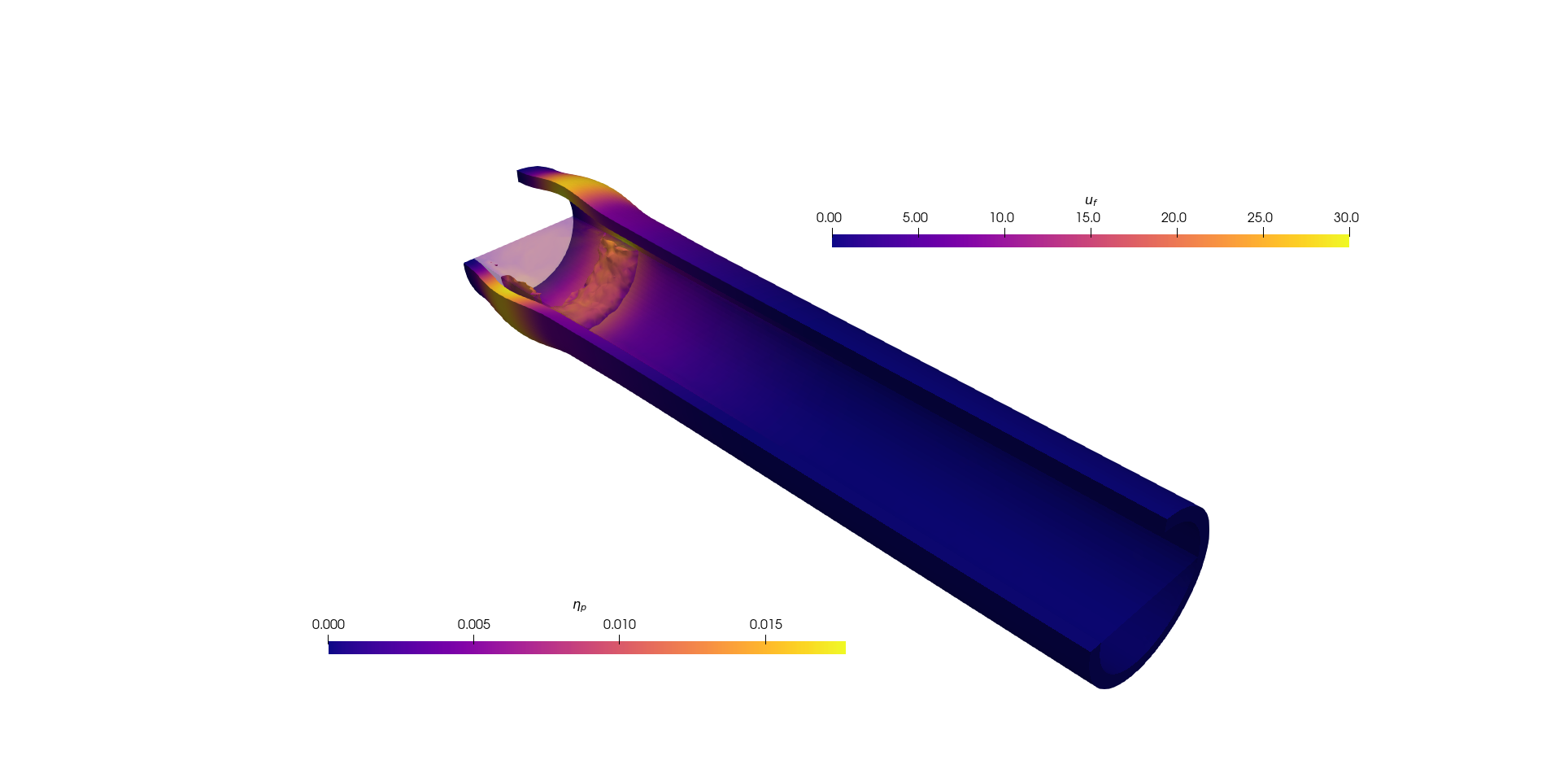}
&
\threeduf{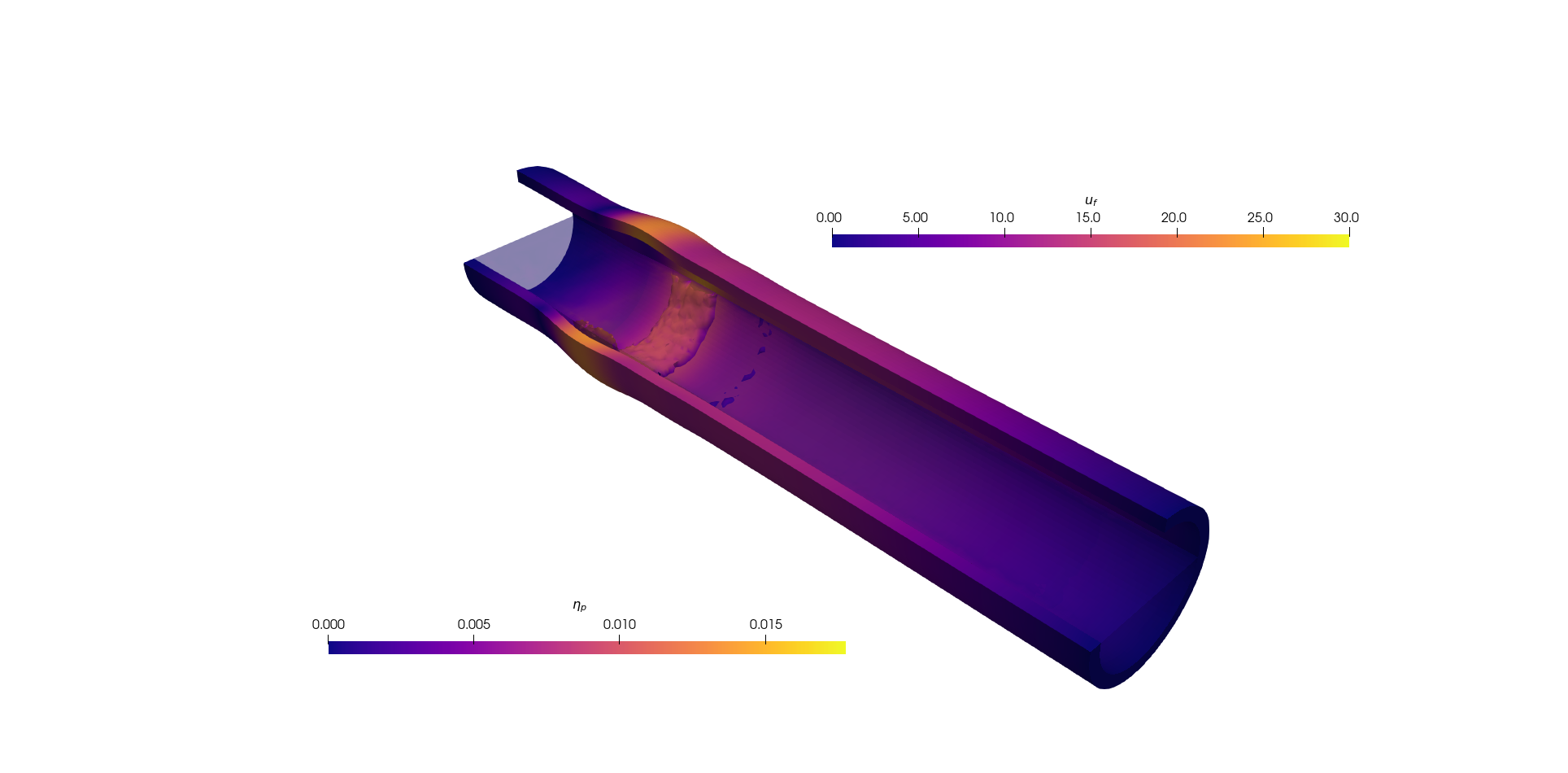}
\\[-0.35em]
{\small (a) $t=0.0035~\mathrm{s}$}
&
{\small (b) $t=0.007~\mathrm{s}$}
\\[0.45em]
\threeduf{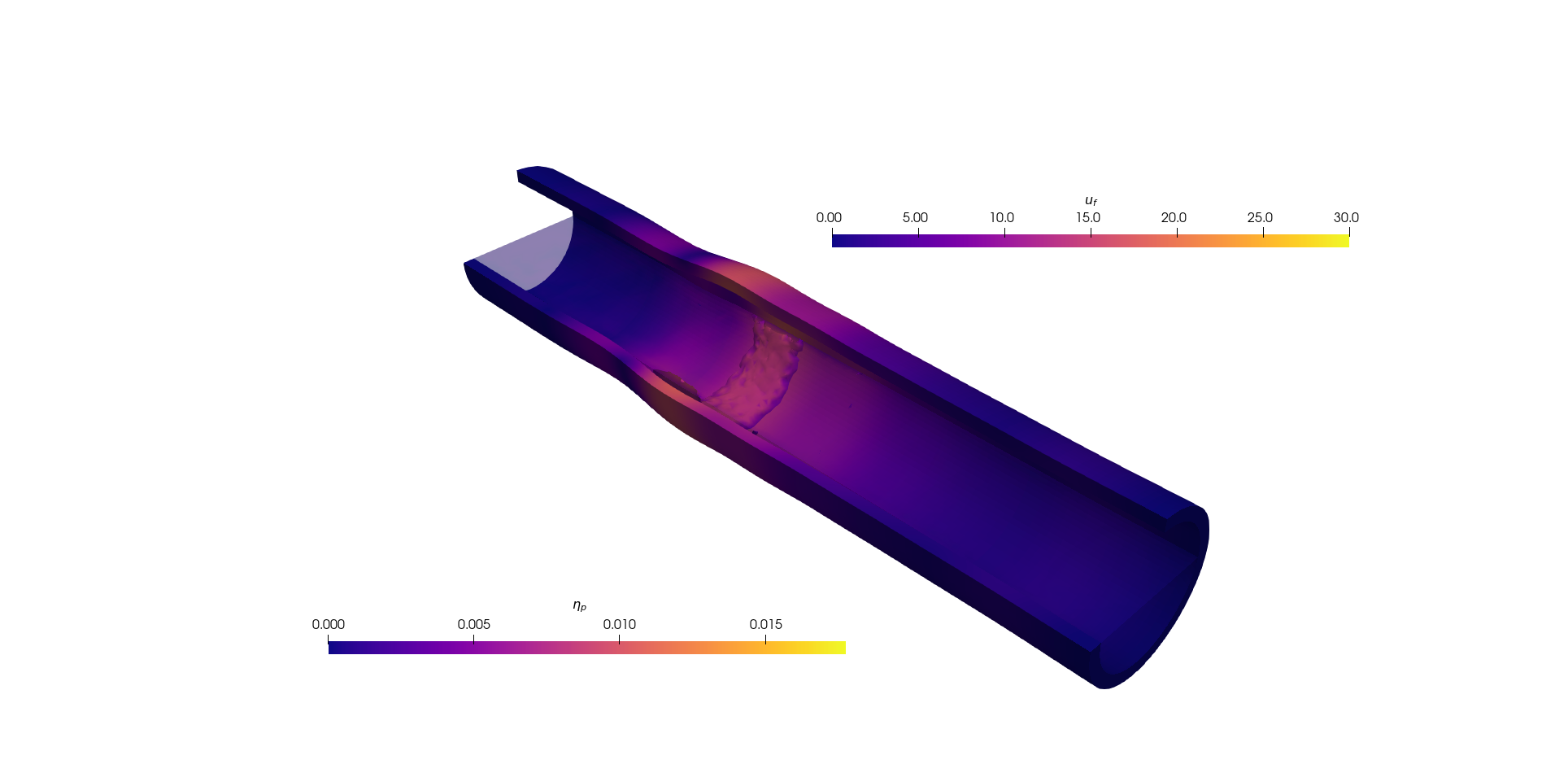}
&
\threeduf{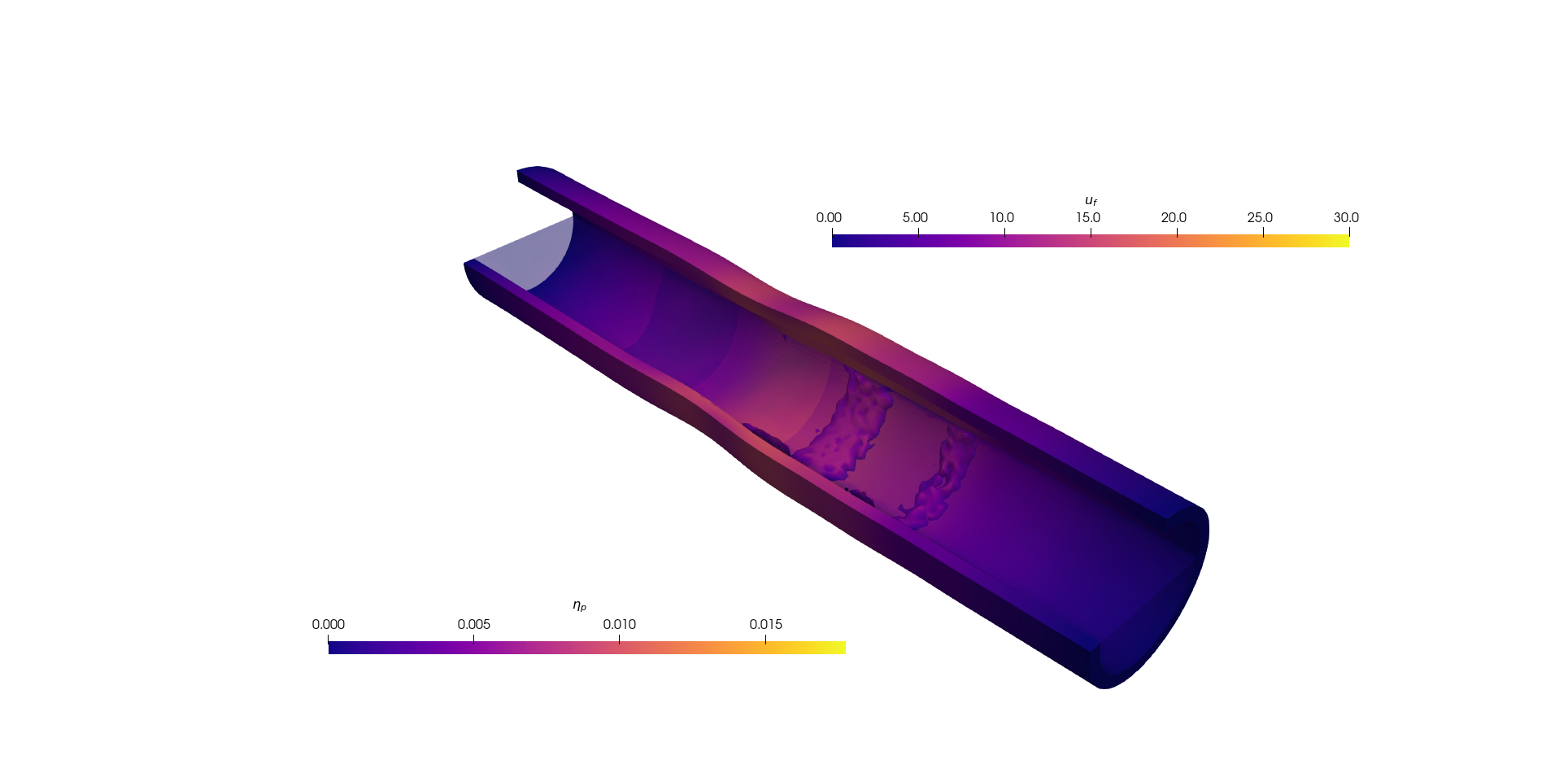}
\\[-0.35em]
{\small (c) $t=0.0105~\mathrm{s}$}
&
{\small (d) $t=0.014~\mathrm{s}$}
\end{tabular}

\caption{Three-dimensional evolution of the velocity magnitude
$|\mathbf{u}_f|$, poroelastic displacement magnitude
$|\boldsymbol{\eta}_p|$, and $Q$-criterion isosurfaces computed with the
first-order Robin partitioned scheme. The fluid region is rendered
semi-transparently, and the isosurface $Q=1000$ identifies
rotation-dominated regions. The displayed poroelastic deformation is
magnified by a factor of $5$, while the displacement color scale shows the
actual magnitude.}
\label{fig:nonlinear_3d_velocity_displacement}
\end{figure}

At the earliest observation time, the dominant fluid motion and wall
deformation are concentrated near the upstream region, and the corresponding
$Q$-criterion isosurfaces are localized close to the deformed
fluid--poroelastic interface. As the pressure disturbance propagates
downstream, the region of pronounced deformation moves along the tube and
the associated rotation-dominated structures develop farther downstream.
At the latest observation time, both expansion and contraction of the
poroelastic wall are captured, and two distinct $Q$-criterion isosurfaces
appear near the corresponding deformed regions. These results therefore
illustrate the close interaction between the moving poroelastic wall and the
local three-dimensional fluid dynamics.

The corresponding pressure evolution is shown in
Figure~\ref{fig:nonlinear_3d_pressure_fields}. The fluid region is colored by
the fluid pressure $p_f$, whereas the poroelastic wall is colored by the pore
pressure $p_p$. Identical color ranges are used at all displayed times to
facilitate direct comparison of the pressure-wave propagation.

\begin{figure}[!htbp]
\centering
\setlength{\tabcolsep}{2pt}

\newcommand{\threedp}[1]{%
  \includegraphics[
    width=0.455\textwidth,
    trim=320bp 65bp 190bp 160bp,
    clip
  ]{#1}%
}

\begin{tabular}{@{}cc@{}}
\threedp{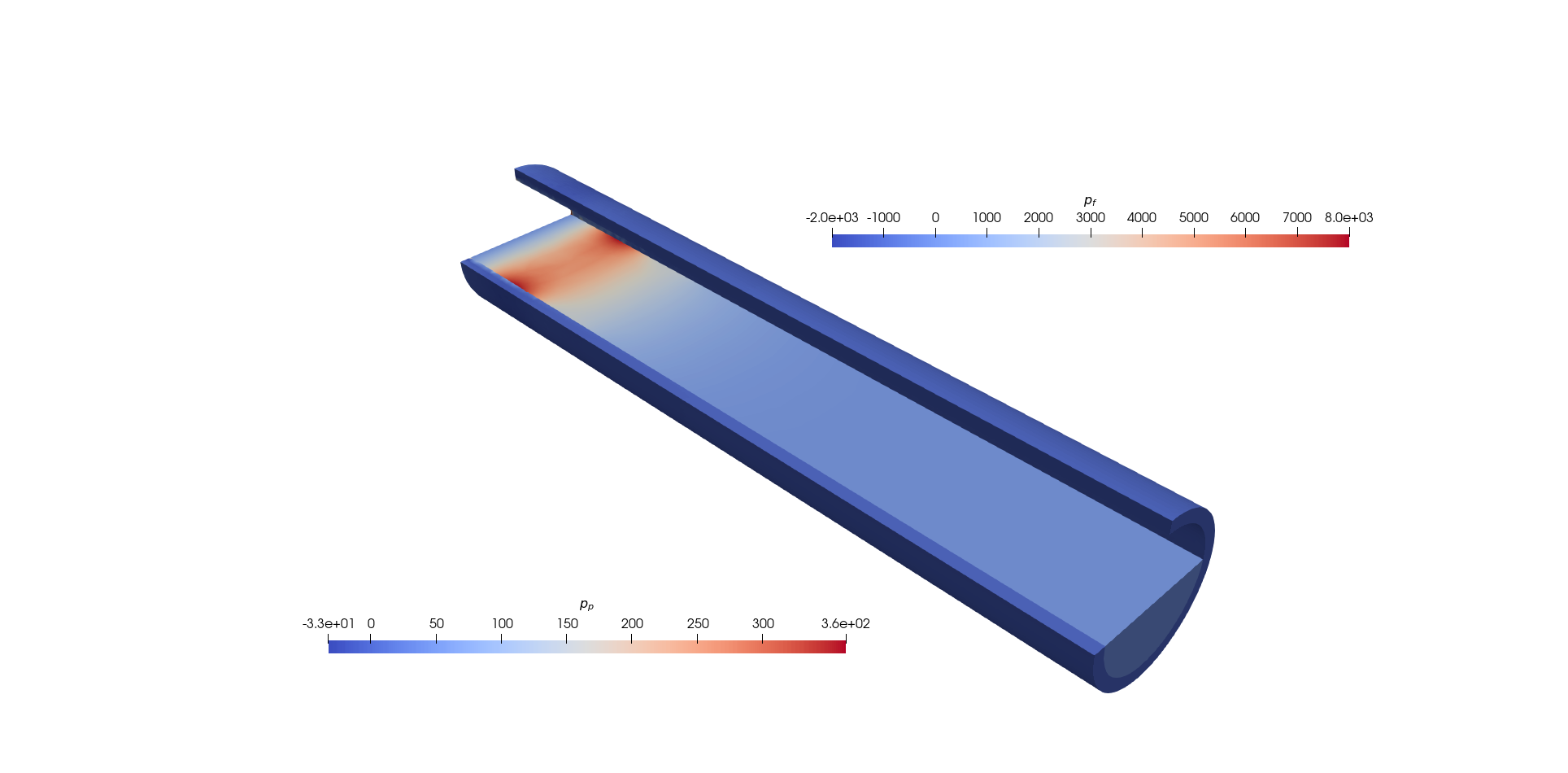}
&
\threedp{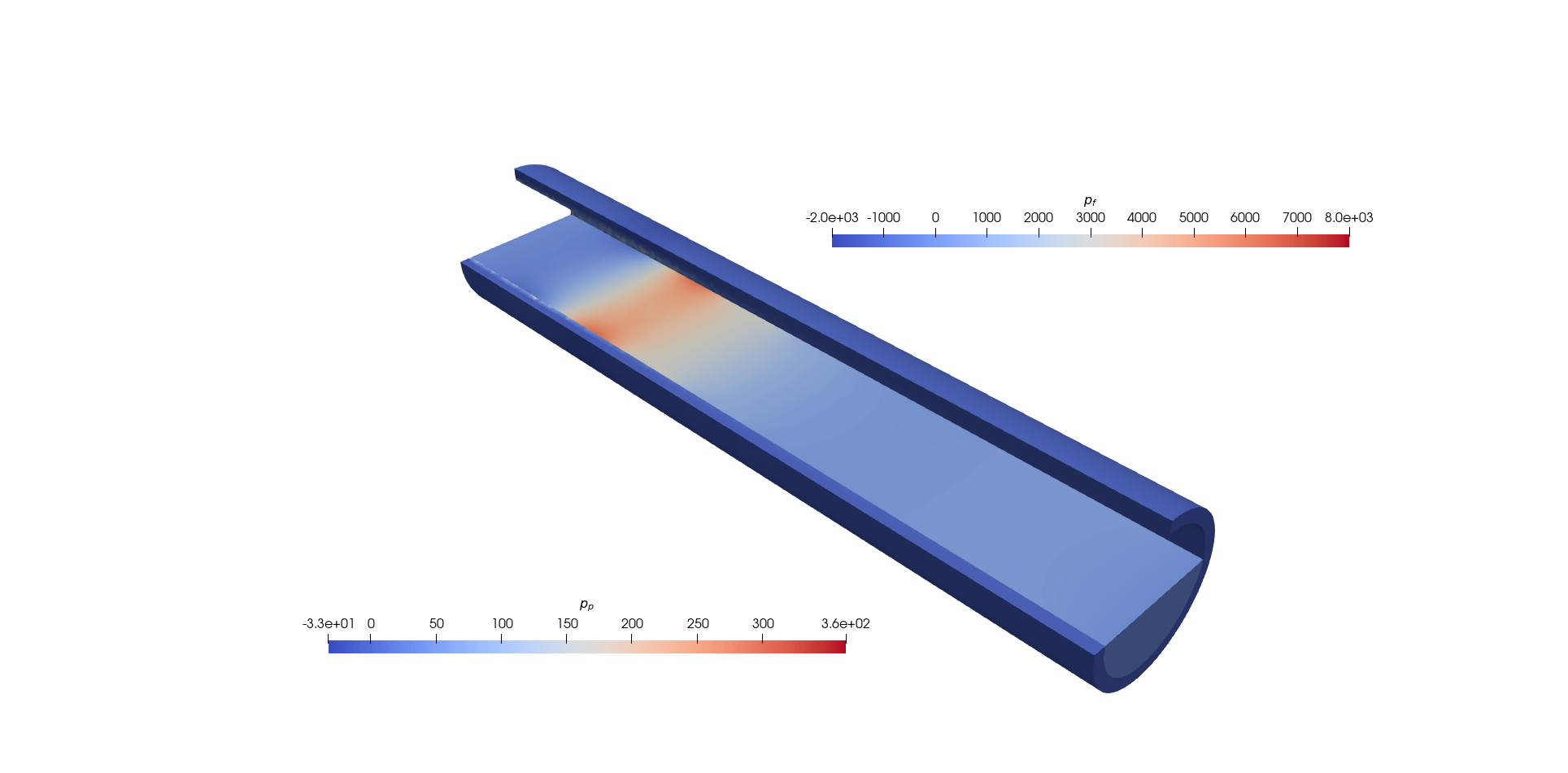}
\\[-0.35em]
{\small (a) $t=0.0035~\mathrm{s}$}
&
{\small (b) $t=0.007~\mathrm{s}$}
\\[0.45em]
\threedp{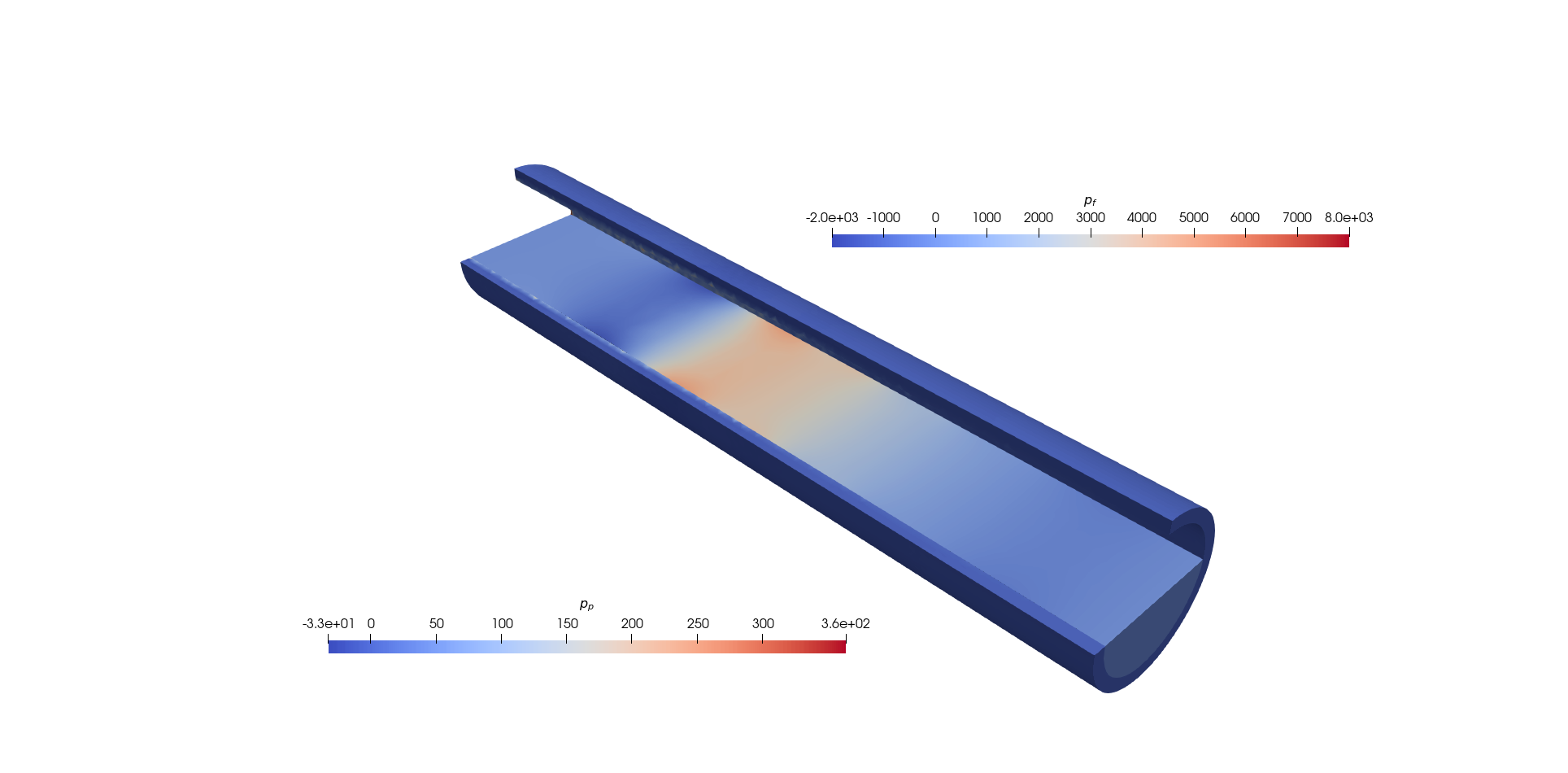}
&
\threedp{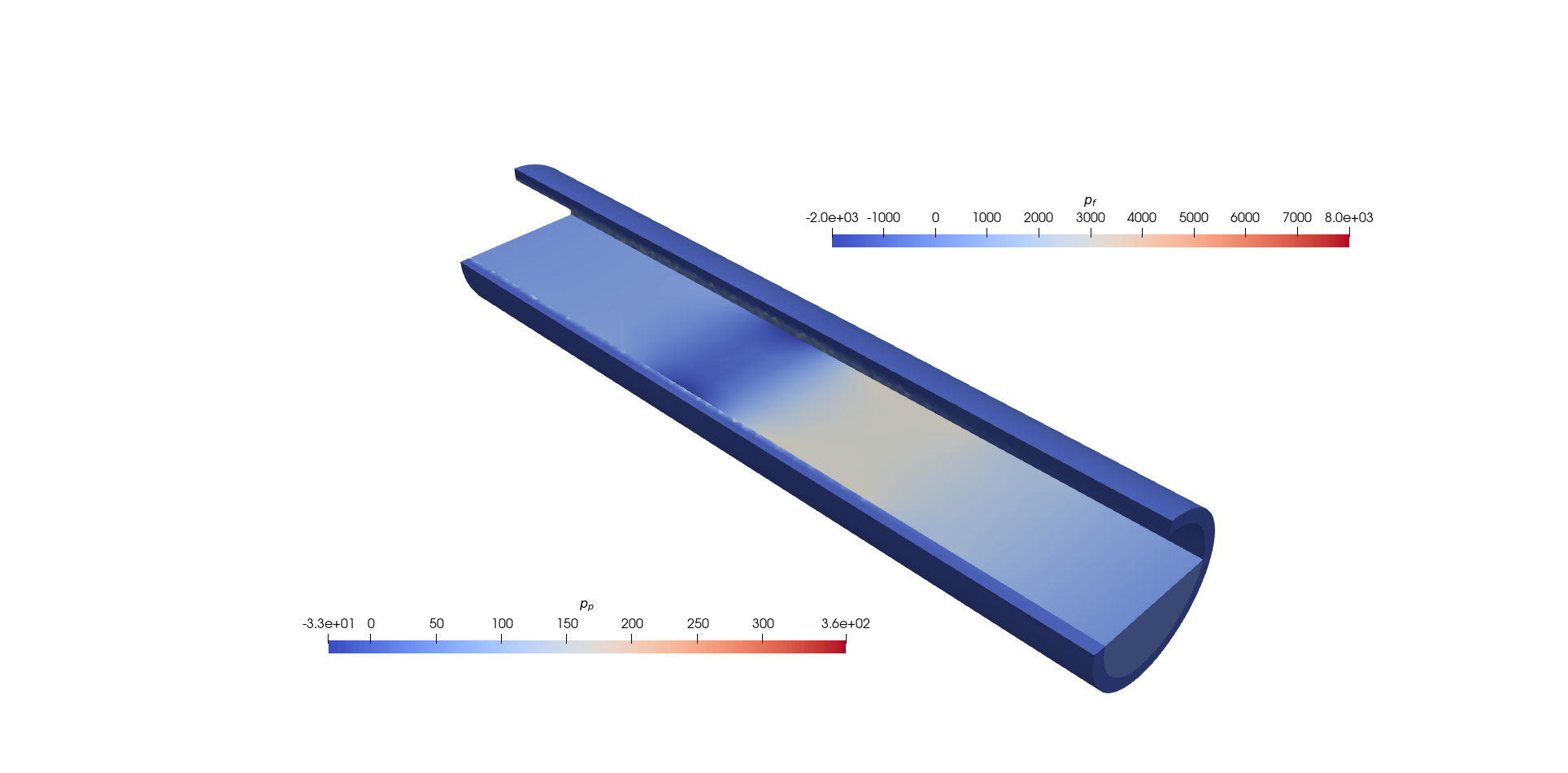}
\\[-0.35em]
{\small (c) $t=0.0105~\mathrm{s}$}
&
{\small (d) $t=0.014~\mathrm{s}$}
\end{tabular}

\caption{Three-dimensional propagation of the fluid pressure $p_f$ and
pore pressure $p_p$ for the pressure-wave problem. Identical color ranges
are used in all panels to facilitate direct comparison of the pressure-wave
location and amplitude.}
\label{fig:nonlinear_3d_pressure_fields}
\end{figure}

The pressure field exhibits a clear traveling-wave pattern. Initially, the
largest fluid pressure is confined to the upstream part of the tube. At the
subsequent observation times, the pressure disturbance moves progressively
downstream, while the pore-pressure response develops in the neighboring
poroelastic wall and follows the same propagation direction. Together with
Figure~\ref{fig:nonlinear_3d_velocity_displacement}, these results show that
the proposed partitioned strategy captures the coupled evolution of
pressure-wave propagation, wall deformation, and three-dimensional
rotation-dominated flow structures on the moving fluid domain.
\FloatBarrier

\bibliographystyle{plain}
\bibliography{reference}
\end{document}